\documentclass[12pt]{article}

\usepackage[utf8]{inputenc}
\usepackage[T1]{fontenc}
\usepackage{mathtools}
\usepackage{amsmath,diagbox}
\usepackage{indentfirst}
\usepackage{mathrsfs}
\usepackage{amsfonts}
\usepackage{arydshln}
\usepackage{enumerate}
\usepackage{setspace}
\usepackage{amssymb,amsthm,cases}
\usepackage{amsbsy,pifont}
\usepackage{latexsym,diagbox,tabularx}
\usepackage{amsmath,amscd,bbm,multirow}
\usepackage{graphicx,epsfig,extarrows,dsfont,mathtools}
\usepackage[final,allcolors=blue,colorlinks=true]{hyperref}
\usepackage[normalem]{ulem}
\input xy
\usepackage{caption,subcaption} 
\usepackage{tikz}
\usepackage{tikz-cd}
\usetikzlibrary{decorations.pathreplacing,decorations.markings}
\usetikzlibrary{cd}
\usetikzlibrary{shapes.geometric}
\usetikzlibrary{arrows,arrows.meta}
\usetikzlibrary{graphs,positioning}
\usetikzlibrary{matrix,decorations.pathmorphing}
\usetikzlibrary{math,calc,intersections,through,angles,arrows.meta,shapes.geometric,shadows,quotes,spy,datavisualization,datavisualization.formats.functions,plotmarks}
\tikzset{every picture/.style={samples=300,smooth,line join=round,thick,>=stealth}}
\usepackage{enumerate}
\usepackage{fancyhdr}
\usepackage[sectionbib]{chapterbib}

\tikzset{
	on each segment/.style={
		decorate,
		decoration={
			show path construction,
			moveto code={},
			lineto code={
				\path [#1]
				(\tikzinputsegmentfirst) -- (\tikzinputsegmentlast);
			},
			curveto code={
				\path [#1] (\tikzinputsegmentfirst)
				.. controls
				(\tikzinputsegmentsupporta) and (\tikzinputsegmentsupportb)
				..
				(\tikzinputsegmentlast);
			},
			closepath code={
				\path [#1]
				(\tikzinputsegmentfirst) -- (\tikzinputsegmentlast);
			},
		},
	},
	mid arrow/.style={postaction={decorate,decoration={
				markings,
				mark=at position .5 with {\arrow[#1]{stealth}}
	}}},
}

\numberwithin{equation}{section}

\theoremstyle{plain}
\newtheorem{theorem}{Theorem}[section]

\newtheorem{corollary}[theorem]{Corollary}

\newtheorem{lemma}[theorem]{Lemma}
\newtheorem{proposition}[theorem]{Proposition}

\theoremstyle{definition}
\newtheorem{definition}[theorem]{Definition}

\newtheorem{remark}[theorem]{Remark}

\newtheorem{assumption}[theorem]{Assumption}
\newtheorem{notation}[theorem]{Notation}

\def\XXint#1#2#3{{\setbox0=\hbox{$#1{#2#3}{\int}$}
		\vcenter{\hbox{$#2#3$}}\kern-.5\wd0}}

\DeclareMathSymbol{\subseteq}{\mathrel}{symbols}{"12}
\DeclareMathSymbol{\supseteq}{\mathrel}{symbols}{"13} 
\DeclareMathSymbol{\subsetneq}{\mathrel}{AMSb}{"28}                  \DeclareMathSymbol{\supsetneq}{\mathrel}{AMSb}{"29}    

\DeclareMathSymbol{\nsubseteq}{\mathrel}{AMSb}{"2A}                  

\DeclareMathSymbol{\nsupseteq}{\mathrel}{AMSb}{"2B}

\DeclareMathDelimiter{\langle}{\mathop}{symbols}{"68}{largesymbols}{"0A}
\DeclareMathDelimiter{\rangle}{\mathclose}{symbols}{"69}{largesymbols}{"0B}

\DeclareSymbolFont{txfontsA}{U}{txmia}{m}{it}
\SetSymbolFont{txfontsA}{bold}{U}{txmia}{bx}{it}
\DeclareFontSubstitution{U}{txmia}{m}{it}
\DeclareMathSymbol{\upalpha}{\mathord}{txfontsA}{"0B}
\DeclareMathSymbol{\upbeta}{\mathord}{txfontsA}{"0C}
\DeclareMathSymbol{\upgamma}{\mathord}{txfontsA}{"0D}
\DeclareMathSymbol{\updelta}{\mathord}{txfontsA}{"0E}
\DeclareMathSymbol{\upepsilon}{\mathord}{txfontsA}{"0F}
\DeclareMathSymbol{\upzeta}{\mathord}{txfontsA}{"10}
\DeclareMathSymbol{\upeta}{\mathord}{txfontsA}{"11}
\DeclareMathSymbol{\uptheta}{\mathord}{txfontsA}{"12}
\DeclareMathSymbol{\upiota}{\mathord}{txfontsA}{"13}
\DeclareMathSymbol{\upkappa}{\mathord}{txfontsA}{"14}
\DeclareMathSymbol{\uplambda}{\mathord}{txfontsA}{"15}
\DeclareMathSymbol{\upmu}{\mathord}{txfontsA}{"16}
\DeclareMathSymbol{\upnu}{\mathord}{txfontsA}{"17}
\DeclareMathSymbol{\upxi}{\mathord}{txfontsA}{"18}
\DeclareMathSymbol{\uppi}{\mathord}{txfontsA}{"19}
\DeclareMathSymbol{\uprho}{\mathord}{txfontsA}{"1A}
\DeclareMathSymbol{\upsigma}{\mathord}{txfontsA}{"1B}
\DeclareMathSymbol{\uptau}{\mathord}{txfontsA}{"1C}
\DeclareMathSymbol{\upupsilon}{\mathord}{txfontsA}{"1D}
\DeclareMathSymbol{\upphi}{\mathord}{txfontsA}{"1E}
\DeclareMathSymbol{\upchi}{\mathord}{txfontsA}{"1F}
\DeclareMathSymbol{\uppsi}{\mathord}{txfontsA}{"20}
\DeclareMathSymbol{\upomega}{\mathord}{txfontsA}{"21}
\DeclareMathSymbol{\upvarepsilon}{\mathord}{txfontsA}{"22}
\DeclareMathSymbol{\upvartheta}{\mathord}{txfontsA}{"23}
\DeclareMathSymbol{\upvarpi}{\mathord}{txfontsA}{"24}
\DeclareMathSymbol{\upvarrho}{\mathord}{txfontsA}{"25}
\DeclareMathSymbol{\upvarsigma}{\mathord}{txfontsA}{"26}
\DeclareMathSymbol{\upvarphi}{\mathord}{txfontsA}{"27}                   

\DeclareSymbolFont{ugmL}{OMX}{mdugm}{m}{n}
\SetSymbolFont{ugmL}{bold}{OMX}{mdugm}{b}{n}
\DeclareMathAccent{\wideparen}{\mathord}{ugmL}{"F3}

\def\pt{\partial}

\def\ra{\rightarrow}
\def\bs{\boldsymbol}

\def\s{\subseteq}

\def\e{\varepsilon}

\def\ol{\overline}

\def\vp{\varphi}
\def\lg{\langle}
\def\llg{\left\langle}

\def\rg{\rangle}
\def\rrg{\right\rangle}
\def\es{\emptyset}

\def\pt{\partial}

\def\Om{\Omega}
\def\la{\lambda}
\def\al{\alpha}

\def\de{\delta}
\def\ga{\gamma}

\def\Ga{\Gamma}

\def\ts{\times}

\def\iy{\infty}

\def\f{\frac}

\def\Lra{\Leftrightarrow}

\def\Span{{\rm{span}}}

\def\df{\mathrm d}

\def\wt{\widetilde}
\def\wh{\widehat}

\def\esssup{\operatorname*{ess\ \! sup}}

\def\hra{\hookrightarrow}

\def\mcA{\mathcal{A}}

\def\mcM{\mathcal{M}}

	\DeclareMathOperator{\Div}{div}

	\DeclareMathOperator{\dist}{dist}

	\DeclareMathOperator{\supp}{{supp}}

	\newcommand{\R}{\mathbb R}
	\newcommand{\N}{\mathbb N}

	\newcommand{\pxi}{\f{\pt}{\pt x^i}}
	\newcommand{\pxj}{\f{\pt}{\pt x^j}}
	\newcommand{\pxk}{\f{\pt}{\pt x^k}}

\begin{document} 
	
	
	\title{Shape-Design Approximation for a Class of Degenerate Hyperbolic Equations with a Degenerate Boundary Point and Its Application to Observability}
	\author{Dong-Hui Yang \and Jie Zhong}

	\maketitle{}
	\thispagestyle{empty}

	\begin{abstract}
	We study a class of degenerate hyperbolic equations in a bounded domain whose degeneracy occurs at a boundary point. We first develop the weighted functional framework, prove well-posedness of the degenerate problem, and establish regularity away from the degenerate point. We then introduce a shape-design approximation obtained by removing a small neighborhood of the degenerate boundary point, which yields uniformly non-degenerate hyperbolic problems on regularized domains. We prove that the regularized solutions converge to the solution of the original degenerate equation, including the convergence of the boundary normal derivatives away from the degenerate point. Finally, under a geometric condition on the observation boundary, we derive an observability inequality for the degenerate equation by combining the uniform observability of the regularized problems with the limit passage.
	\end{abstract}
	
	\section{Introduction}\label{S1}
	
	Shape design and domain approximation provide a useful way to study partial differential equations with singular coefficients or singular geometry \cite{Buttazzo,Chenais,Greco,Guo2,Guo1,Guo,He,Privat,Yang2}. For degenerate equations, this idea is particularly natural because the original model may exhibit weak traces, nonstandard boundary behavior, or limited regularity near the degenerate set. Replacing the degenerate problem by a family of uniformly non-degenerate problems on regularized domains offers a practical route from classical hyperbolic theory back to the degenerate equation.
	
	Controllability and observability for hyperbolic equations have been studied extensively \cite{Bai1,Cannarsa,Cannarsa1,Gueye,Lasiecka,Lasiecka3,Lions,Yang3,Yao,Zhang,Zuazua}. By the standard duality principle, observability of the adjoint system is equivalent to controllability of the original system \cite{Lions}. For degenerate hyperbolic equations, the one-dimensional theory is by now much better understood, while the higher-dimensional theory remains far more limited. In particular, when the degeneracy occurs at a boundary point, both the correct functional setting and the treatment of boundary terms require additional care.

	One effective route for degenerate equations is to approximate the degenerate operator by a family of uniformly elliptic or hyperbolic problems. When the degeneracy occurs at an interior point, this approximation strategy has been used in several related works \cite{Wu,Wu1,Yang1,Yang2}. When the degeneracy occurs on the boundary, the corresponding regularization naturally leads to a shape-design approximation \cite{Guo}.
	
	The present paper is aimed mainly at the higher-dimensional boundary-degenerate setting. The one-dimensional case can be treated by methods already available in the literature, for instance \cite{Gueye}; the novelty here lies in combining the boundary-point geometry with a shape-design approximation and a subsequent observability analysis on the regularized domains.
	
	A central point of the paper is that the shape-design method is not introduced merely for convenience. Even under a favorable geometric sign condition near the degenerate point, it is difficult to apply the classical multiplier method directly to the original degenerate equation, because the required boundary integrations near the degenerate point are not immediately justified at the level of the natural weighted energy space. In particular, one must first control the boundary trace and the normal derivative away from the degenerate point and justify the relevant integration-by-parts identities. The shape-design approximation resolves this difficulty by transferring the argument to smooth regularized domains $\Omega_\varepsilon$, where the approximate equations are uniformly hyperbolic and the classical multiplier method applies in a standard way.
	
	In this paper, we consider a class of degenerate hyperbolic equations whose degeneracy is located at a boundary point. More precisely, we study
	\begin{equation}\label{03.06.m}
		\begin{cases}
			\partial_{tt}y-\Div(|x|^\alpha \nabla y)=f, &\text{in }Q, \\
			y=0, &\text{on }  \pt Q, \\
			y(0)=y^0, \pt_ty(0)=y^1, &\text{in }\Omega,
		\end{cases}
	\end{equation}
	where $\alpha\in (0,1)$ is a given constant, $\Omega\subset\mathbb{R}^N$ with $N\ge 2$ is a bounded domain with $0\in\pt\Om$ and $\partial\Omega\in C^2$, $Q=\Omega\times (0,T)$ with $T>0$ being a constant, 
	and $\pt Q=\pt\Om \ts (0,T)$, 
		$y^0\in H_0^1(\Omega;w)$ and $y^1\in L^2(\Om)$ are the initial data, and $f\in L^2(Q)$. Here, the weight Sobolev space $H_0^1(\Om;w)$ will be defined in Section \ref{S2}. 
	
	The paper has three main objectives: to build a weighted framework for \eqref{03.06.m} and prove well-posedness; to approximate \eqref{03.06.m} by uniformly hyperbolic equations posed on regularized domains obtained by removing a small neighborhood of the degenerate boundary point; and to use this approximation procedure to derive an observability estimate under a geometric condition on the observation boundary.
	
	The geometric assumption used in the paper is the following.

	\begin{assumption}\label{Assumption (H)}
		In this paper, we assume there exists $R_0>0$ such that 
		\begin{equation*}
			x\cdot \nu(x)\leq 0 \mbox{ for all }x\in \pt\Om\cap B(0,R_0).
		\end{equation*}
		Here and in what follows, we denote $B(x_0,r)=\{x\in \R^N\colon |x-x_0|<r\}$ for $x_0\in\R^N$. Hence, for each $\e\in (0,\f{1}{8}R_0)$, there exists a $C^2$ bounded domain $\Om_\e$, such that 
		\begin{equation*}
			\Om_\e\s \Om, \mbox{ and  }  B(0,\e)\cap \Om_\e=\es \mbox{ and  } \Om-\Om_\e\s B(0,2\e)
		\end{equation*}and 
		\begin{equation*}
			x\cdot \nu(x)\leq 0 \mbox{ for all } x\in \pt\Om_\e \cap B(0,R_0). 
		\end{equation*}
		Moreover, we assume $M=\sup_{x\in\Om}|x|+1$. Denote 
		\begin{equation*}
			\Ga_0=\left\{x\in\pt\Om\colon x\cdot \nu(x)>0 \right\},
		\end{equation*}
		then $\Ga_0\s \pt\Om- B(0,R_0)$. 
		
		Denote 
		\begin{equation*}
			w=|x|^\al, \quad \mcA \vp=-\Div(w\nabla \vp), 
		\end{equation*} 
		and 
		\begin{equation*}
			w_\e=w|_{\Om_\e}, \quad \mcA_\e\vp=-\Div(w_\e\nabla\vp) \mbox{ on }\Om_\e. 
		\end{equation*}
	\end{assumption} 
	
	\begin{remark}\label{03.03.R1}
		This remark illustrates which local boundary geometries near the degenerate point satisfy Assumption \ref{Assumption (H)}. Let $N=2$ and $\e\in (0,\f{1}{8}R_0)$. Suppose that, in a neighborhood of the point $0\in \pt\Om$, the boundary is represented by a local chart $x_2=\ga(x_1)$. If $\ga(x_1)=-x_1^2$, so that locally
		\begin{equation*}
			\Om\cap B(0,\e)=B(0,\e)\cap \{x=(x_1,x_2)\in\R^2\colon x_2>-(x_1)^2 \},
		\end{equation*}
		then Assumption \ref{Assumption (H)} is satisfied. In contrast, if $\ga(x_1)=x_1^2$, namely
		\begin{equation*}
			\Om\cap B(0,\e)=B(0,\e)\cap \left\{x=(x_1,x_2)\in\R^2\colon x_2>(x_1)^2\right\},
		\end{equation*}
		then Assumption \ref{Assumption (H)} fails. The same is true for oscillatory profiles such as $\ga(x_1)=x_1^3\sin \f{1}{x_1}$, i.e.,
		\begin{equation*}
			\Om\cap B(0,\e)=B(0,\e)\cap \left\{x=(x_1,x_2)\in\R^2\colon x_2>x_1^3\sin \f{1}{x_1}\right\}.
		\end{equation*}
	\end{remark}
	
	This assumption has two roles. First, it makes it possible to construct a family of regularized domains $\Omega_\varepsilon$ that avoid the degenerate boundary point while preserving the relevant part of the outer boundary. Second, it yields the sign condition needed in the multiplier argument for the approximate observability estimate. Geometrically, the condition means that near the degenerate point the outward normal does not point in the radial direction of the multiplier field $H(x)=x$, so the corresponding boundary contribution is favorable. The same sign structure is inherited by the artificial boundary generated in the regularization process.
	
	The main results can be summarized as follows. Theorem \ref{12.12.T2} proves that the regularized solutions converge to the solution of the degenerate equation, both in the energy space and, away from the degenerate point, at the level of boundary normal derivatives. Theorem \ref{12.04.T4} states the resulting observability property for the degenerate equation. The role of the shape-design approximation is therefore twofold: first, it produces a family of uniformly non-degenerate problems on $\Omega_\varepsilon$, where the observability estimate can be established by classical arguments; second, it provides the convergence mechanism needed to pass that estimate back to the original degenerate equation. In this sense, the approximation is the bridge between the regularized observability analysis and the final observability result for the degenerate problem.
	
	\begin{theorem}\label{12.12.T2}
	Let $\al\in (0,1)$ and $\e\in (0,\f{1}{8}R_0)$. Let $y^0\in C_0^\iy(\Om)$, $y^1\in L^2(\Om)$, and $f\in L^2(Q)$. Suppose $y$ is the solution of \eqref{03.06.m} with respect to $(y^0,y^1,f)$, and $y_\e$ is the solution of \eqref{12.12.15} with respect to $(y_\e^0=y^0, y_\e^1=y^1|_{\Om_\e},f_\e=f|_{Q_\e})$. Then there exists a subsequence, still denoted by $\e$, such that
		\begin{equation}\label{12.12.20}
			\begin{split}
				Ey_\e
				&\ra y\ \!\quad  \mbox{ weakly in } L^2(0,T; H_0^1(\Om;w)), \\
				\pt_t Ey_\e 
				&\ra \pt_ty\ \mbox{ weakly in } L^2(Q),\\
				\f{\pt y_\e}{\pt \nu}=\f{\pt Ey_\e}{\pt \nu}
				&\ra \f{\pt y}{\pt \nu}\ \mbox{ strongly in } L^2(0,T; L^2(\pt\Om-B(0,R_0))).
			\end{split}
		\end{equation}
		Moreover, if we additionally assume $y^0\in C_0^\iy(\Om)$, $y^1\in C_0^\iy(\Om)$, and $f=0$, then there exists a subsequence, still denoted by $\e$, such that
		\begin{equation}\label{12.13.3}
			\f{\pt y_\e}{\pt \nu}=\f{\pt Ey_\e}{\pt \nu}\ra \f{\pt y}{\pt \nu} \mbox{ strongly in } L^2(0,T; L^2(\pt\Om-B(0,R_0))). 
		\end{equation}
		
	\end{theorem}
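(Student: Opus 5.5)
We prove the three convergences in \eqref{12.12.20} in three steps: uniform energy bounds, weak compactness with identification of the limit, and strong convergence of the normal derivatives via a localized multiplier estimate.

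\emph{Step 1: uniform energy bounds.} For each $\e$, the problem \eqref{12.12.15} is uniformly hyperbolic on the $C^2$ domain $\Om_\e$, since $w_\e\ge \e^\al>0$ there. Let $E$ denote extension by zero from $\Om_\e$ to $\Om$. Multiplying \eqref{12.12.15} by $\pt_t y_\e$ and applying Gronwall gives
\begin{equation*}
\sup_{t\in[0,T]}\Big(\|\pt_t y_\e(t)\|_{L^2(\Om_\e)}^2+\int_{\Om_\e} w|\nabla y_\e(t)|^2dx\Big)\le C\Big(\int_{\Om} w|\nabla y^0|^2dx+\|y^1\|_{L^2(\Om)}^2+\|f\|_{L^2(Q)}^2\Big).
\end{equation*}
This step needs $y^0\in H_0^1(\Om_\e)$, so we take $\e$ small enough that $\supp y^0\cap B(0,2\e)=\es$. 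The constant $C$ is independent of $\e$, because the weight $w$ is the same function on both domains. Since $Ey_\e\in H_0^1(\Om;w)$ and $\nabla Ey_\e=E\nabla y_\e$, the family $Ey_\e$ is bounded in $L^\iy(0,T;H_0^1(\Om;w))$ and $\pt_tEy_\e$ is bounded in $L^\iy(0,T;L^2(\Om))$.

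\emph{Step 2: weak limits and identification.} Extract a subsequence with $Ey_\e\rightharpoonup z$ weakly in $L^2(0,T;H_0^1(\Om;w))$ and $\pt_tEy_\e\rightharpoonup \pt_tz$ weakly in $L^2(Q)$. Take a test function $\vp\in C_0^\iy(\Om\ts[0,T))$ whose support avoids a neighborhood of $0$; such $\vp$ are dense in the weak-formulation test class, since $\{0\}\ts(0,T)$ has zero weighted capacity for $\al<1$ (presumably this is stated in Section \ref{S2}). For $\e$ small, $\vp|_{\Om_\e}$ is admissible for \eqref{12.12.15}, and the weak formulation there reads
\begin{equation*}
-\int_Q \pt_tEy_\e\,\pt_t\vp+\int_Q w\nabla Ey_\e\cdot\nabla\vp=\int_\Om y^1\vp(0)+\int_Q f\vp .
\end{equation*}
Passing to the limit shows that $z$ is a weak solution of \eqref{03.06.m}. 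By the uniqueness part of the well-posedness theorem, $z=y$, so the convergence holds along the subsequence.

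\emph{Step 3: normal derivatives.} This is the main obstacle. Fix a cutoff $\theta\in C^\iy(\R^N)$ with $\theta=1$ outside $B(0,R_0)$ and $\theta=0$ in $B(0,R_0/2)$. On $\supp\theta\cap\Om$ the operator is uniformly elliptic with smooth coefficients, and $\Om_\e$ coincides with $\Om$ there. Let $h$ be a $C^1$ extension of $\nu$ and apply the multiplier $\theta h\cdot\nabla y_\e$. This yields the hidden-regularity bound
\begin{equation*}
\int_0^T\!\!\int_{\pt\Om}\theta w\Big|\f{\pt y_\e}{\pt\nu}\Big|^2\le C\,(\text{energy})\le C,
\end{equation*}
uniformly in $\e$, so the normal derivatives converge weakly in $L^2$ to $\pt_\nu y$. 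For the same reason, $\theta y$ is a solution of a nondegenerate wave equation in $\Om$, and the localized multiplier identity shows that $\pt_\nu y$ is its trace.

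Weak convergence alone does not give the claimed strong convergence, so we upgrade it. Set $\theta y_\e-\theta y$. It satisfies a nondegenerate wave equation on $\Om$ with zero data, whose right-hand side is a commutator of the form $[\mcA,\theta](y_\e-y)$, involving only first derivatives supported in the annulus $R_0/2<|x|<R_0$. The hidden-regularity estimate bounds the boundary $L^2$ norm of $\pt_\nu(\theta(y_\e-y))$ by the $L^2(Q)$ norm of this commutator plus lower-order terms.

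It therefore suffices to show that $\nabla(y_\e-y)\to0$ strongly in $L^2$ on the annulus region. We would first try interior regularity. Apply the same energy estimate to $\pt_t y_\e$, which requires $y^1\in C_0^\iy$, $f=0$ and $\mcA y^0\in L^2$. This is exactly why the stronger statement \eqref{12.13.3} carries these extra hypotheses, and in that case it gives uniform $H^2_{\rm loc}$ and $H^1$-in-time bounds away from $0$, so Aubin--Lions yields strong local $H^1$ convergence.

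For the first statement with general $y^1,f$, we instead run a density argument. Approximate $(y^1,f)$ by smooth data; then the stability estimate, uniform in $\e$, from Step 1 together with the hidden-regularity bound transfers strong convergence from the smooth data to the general case. We expect the uniform-in-$\e$ control of the commutator term near the annulus to be the delicate point.
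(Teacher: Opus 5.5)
Your Steps 1 and 2 follow the paper's argument: the $\e$-uniform bounds of Theorem \ref{12.12.T1}, weak compactness, and identification of the limit through the integral identity and uniqueness. Two small remarks. The capacity discussion is unnecessary, because $0\in\pt\Om$ already forces every test function compactly supported in $\Om$ to vanish near $0$. Also, your weak formulation is integrated only once in time, so you should add that $z(0)=y^0$. This follows from $Ey_\e(0)=y^0$ and weak continuity of the trace at $t=0$ on $H^1(0,T;L^2(\Om))$.

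Step 3 takes a genuinely different route from the paper.

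\begin{itemize}
\item \textbf{What the paper does.} For general data it only extracts a weakly convergent subsequence of $\pt_\nu y_\e$. It identifies the limit by testing both equations against an extension $\wt\xi$ of a boundary function and comparing the two Green formulas. Under the extra hypotheses it applies Aubin--Lions directly to the boundary traces. Here $\pt_\nu y_\e$ is bounded in $L^2(0,T;H^{1/2}(\pt\Om-B(0,R_0)))$, and $\pt_t\pt_\nu y_\e$ is bounded in $L^2(0,T;L^2(\pt\Om-B(0,R_0)))$ by the fifth term of \eqref{12.12.19}.
\item \textbf{What you do in the smooth case.} You place the compactness in the interior: strong $H^1$ convergence of $y_\e$ on an annulus away from $0$. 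You then carry it to the boundary through the hidden-regularity estimate for $\theta(y_\e-y)$, which has zero data and the commutator as forcing. This avoids the time-differentiated trace bound. It also avoids a separate identification of the limit, since your estimate is on the difference itself. The price is a uniform $H^2$ bound on a neighbourhood of the annulus, which the same cutoff and elliptic-regularity argument provides.
\item \textbf{What your density step adds.} It gives strong convergence for general $y^1\in L^2(\Om)$, $f\in L^2(Q)$. That is what the third line of \eqref{12.12.20} literally asserts, but the paper's proof establishes it only weakly.
\end{itemize}

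The density step is sound, and it is not the delicate point you expect. Take smooth approximants $(y^1_k,f_k)$ and the corresponding solutions $y_\e^k$ and $y^k$. By the fourth terms of \eqref{12.12.18} and \eqref{12.11.1}, both $\|\pt_\nu(y_\e-y_\e^k)\|$ and $\|\pt_\nu(y^k-y)\|$ are bounded by
\begin{equation*}
C\left(\|y^1-y^1_k\|_{L^2(\Om)}+\|f-f_k\|_{L^2(Q)}\right),
\end{equation*}
uniformly in $\e$. An $\varepsilon/3$ argument then finishes the proof, with no commutator control needed. The smooth-case limit is unique, so passing to subsequences causes no trouble.

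There is one correction. The time-differentiated estimate does not need $f=0$; it needs only $f\in H^1(0,T;L^2)$, by the second part of Theorem \ref{12.12.T1}. You must use this version, because your approximating forces $f_k$ are smooth but nonzero.
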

	
	\begin{theorem}\label{12.04.T4}
		Let $a,b$ be defined in \eqref{11.26.3} and \eqref{11.26.5}, respectively. The system \eqref{03.06.m} is observable for $T>\f{2b}{a}$. 
	\end{theorem}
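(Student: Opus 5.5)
We prove Theorem \ref{12.04.T4} by establishing an observability inequality, uniform in $\e$, for the regularized problems on $\Om_\e$, and then passing to the limit with Theorem \ref{12.12.T2}. The target inequality is
\begin{equation*}
E(0)\le C\int_0^T\!\!\int_{\Ga_0} \Big|\f{\pt y}{\pt\nu}\Big|^2 \,d\Ga\,dt,
\qquad E(t)=\f12\int_\Om \big(|\pt_t y|^2+w|\nabla y|^2\big)\,dx ,
\end{equation*}
for solutions of \eqref{03.06.m} with $f=0$. Here we presume $a$ and $b$ from \eqref{11.26.3} and \eqref{11.26.5} are the constants that bound the multiplier terms: $a$ a lower bound of the form $(2-\al)\,w|\nabla y|^2$ coming from $\Div(x)$, and $b$ an upper bound such as $\sup|x|\,\sup w^{1/2}$, which controls $|x\cdot\nabla y\,\pt_t y|$ by $b\,E$.

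\textbf{Step 1: multiplier identity on $\Om_\e$.} For data in $C_0^\iy(\Om)$ and $f=0$, the solution $y_\e$ is smooth enough to integrate by parts on the $C^2$ domain $\Om_\e$. We multiply the equation by $2x\cdot\nabla y_\e+(N-2+\al-\delta)\,y_\e$, with the constant chosen to balance the terms, and use $x\cdot\nabla w=\al w$. This gives
\begin{equation*}
\int_0^T\!\!\int_{\pt\Om_\e} w\,(x\cdot\nu)\Big|\f{\pt y_\e}{\pt\nu}\Big|^2
= \Big[\text{time-boundary terms}\Big]_0^T+\int_0^T\!\!\int_{\Om_\e}\big((2-\al)\,w|\nabla y_\e|^2+\cdots\big).
\end{equation*}
Since $y_\e=0$ on $\pt\Om_\e$, we have $\nabla y_\e=\f{\pt y_\e}{\pt\nu}\nu$ there. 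We split $\pt\Om_\e$ into the part inside $B(0,R_0)$, where Assumption \ref{Assumption (H)} gives $x\cdot\nu\le0$ so the contribution has a favorable sign and is dropped, and the part outside $B(0,R_0)$, which equals $\pt\Om-B(0,R_0)$ and contains $\Ga_0$. On that outer part $w\le M^\al$, and we bound the integral by its portion over $\Ga_0$.

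\textbf{Step 2: uniform constants.} Energy conservation $E_\e(t)=E_\e(0)$ for the undamped regularized system, together with the pointwise bound $|x\cdot\nabla y_\e\,\pt_ty_\e|\le b\,(\ldots)$ and the weighted Hardy/Poincaré-type inequality from Section \ref{S2}, gives
\begin{equation*}
(aT-2b)\,E_\e(0)\le C\int_0^T\!\!\int_{\Ga_0}\Big|\f{\pt y_\e}{\pt\nu}\Big|^2 .
\end{equation*}
The Poincaré-type inequality is needed to absorb the lower-order term $y_\e\,\pt_t y_\e$. The constants $a$, $b$, $C$ do not depend on $\e$, since $w_\e=w|_{\Om_\e}$ and every constant comes from bounds on $\Om\supset\Om_\e$. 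We must check here that the Hardy constant holds uniformly on $\Om_\e$. This follows because $H_0^1(\Om_\e;w)$ embeds into $H_0^1(\Om;w)$ by zero extension $E$.

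\textbf{Step 3: limit.} By \eqref{12.13.3}, the right-hand side converges to the corresponding integral for $y$, since $\Ga_0\s\pt\Om-B(0,R_0)$. On the left, $E_\e(0)=\f12\int_{\Om_\e}(|y^1|^2+w|\nabla y^0|^2)\to E(0)$ by dominated convergence, because $\Om-\Om_\e\s B(0,2\e)$ and the data are fixed. This yields the inequality for smooth data with $T>2b/a$. Density of $C_0^\iy(\Om)\times C_0^\iy(\Om)$ in $H_0^1(\Om;w)\times L^2(\Om)$, combined with the continuous dependence of $\f{\pt y}{\pt\nu}$ on $\Ga_0$ (a hidden-regularity estimate away from the degenerate point, from the earlier regularity results), extends it to all data.

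The main obstacle is Step 2: making the multiplier estimate uniform in $\e$, and in particular handling the lower-order term so that the threshold is exactly $T>2b/a$. The sign on the artificial boundary $\pt\Om_\e\cap B(0,2\e)$ is supplied directly by the assumption.
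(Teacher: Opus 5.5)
Your architecture matches the paper's chain (Theorem \ref{11.27.T1}, Lemma \ref{12.04.L2}, Theorem \ref{12.04.T3}). That chain is: multiplier $x\cdot\nabla y_\e$ plus a multiple of $y_\e$ on $\Om_\e$, the sign $x\cdot\nu\le 0$ on $\pt\Om_\e\cap B(0,R_0)$, $\e$-independent constants, the limit via \eqref{12.13.3} for $C_0^\iy$ data, and then density with \eqref{12.11.1}. The gap is in Step 2, at the point you flag yourself. Balancing the interior terms forces the zeroth-order coefficient $\kappa=N-1+\f{\al}{2}=N-a$, i.e.\ $\delta=-a$ in your notation. The interior then contributes $a\iint_{Q_\e}\big((\pt_ty_\e)^2+w|\nabla y_\e|^2\big)\,dx\,dt=2aTE_\e(0)$. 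The time-boundary term is $X(T)-X(0)$ with $X(t)=\int_{\Om_\e}\pt_ty_\e\,(2x\cdot\nabla y_\e+\kappa y_\e)\,dx$. Suppose you estimate $\kappa\int_{\Om_\e}y_\e\,\pt_ty_\e\,dx$ separately with the Hardy/Poincar\'e inequality \eqref{12.12.16}. Then you only get $|X(t)|\le(2b+\kappa C_P)E_\e(0)$ with $C_P=2M^{1-\al/2}/(N-2+\al)$. That gives observability only for $T>(2b+\kappa C_P)/a$, a strictly smaller range than the asserted $T>2b/a$.

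The missing idea, which is the paper's Lemma \ref{11.26.L2}, is that the lower-order term need not be absorbed at all, because it only shrinks the multiplier. Since $y_\e=0$ on $\pt\Om_\e$, we have $\int_{\Om_\e}y_\e\,x\cdot\nabla y_\e\,dx=\f12\int_{\Om_\e}x\cdot\nabla(y_\e^2)\,dx=-\f N2\int_{\Om_\e}y_\e^2\,dx$. Hence
\[
\|2x\cdot\nabla y_\e+\kappa y_\e\|_{L^2(\Om_\e)}^2=4\|x\cdot\nabla y_\e\|_{L^2(\Om_\e)}^2-\kappa(2N-\kappa)\|y_\e\|_{L^2(\Om_\e)}^2\le 4\|x\cdot\nabla y_\e\|_{L^2(\Om_\e)}^2,
\]
because $\kappa(2N-\kappa)=N^2-a^2=c>0$ (see \eqref{11.26.7}). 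Combine this with the pointwise bound $|x\cdot\nabla y_\e|\le |x|w^{-1/2}\,w^{1/2}|\nabla y_\e|=|x|^{1-\al/2}\,w^{1/2}|\nabla y_\e|$. This yields $|X(t)|\le 2b\,\|\pt_ty_\e\|_{L^2(\Om_\e)}\|w^{1/2}\nabla y_\e\|_{L^2(\Om_\e)}\le 2bE_\e(0)$. Therefore $\int_0^T\!\int_{\pt\Om_\e}w\,(x\cdot\nu)\big(\f{\pt y_\e}{\pt\nu}\big)^2\,dS\,dt\ge 2(aT-2b)E_\e(0)$, with exactly the threshold $2b/a$.

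Note also that the relevant weight is $|x|w^{-1/2}$. So $b=\sup_\Om|H|_{g_\e}=\sup_\Om|x|^{1-\al/2}$ as in \eqref{11.26.5}, not $\sup|x|\,\sup w^{1/2}$ as you presumed; the latter fails to dominate $|x|^{1-\al/2}$ when $\sup_\Om|x|<1$.

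With these corrections the rest of your argument goes through as written. For compactly supported data you even have $E_\e(0)=E(0)$ once $2\e<\dist(\supp y^0\cup\supp y^1,0)$, so no dominated convergence is needed.
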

	
	The rest of the paper is organized as follows. Section \ref{S2} introduces the weighted Sobolev spaces, the degenerate operator, and the well-posedness theory for \eqref{03.06.m}. Section \ref{S3} studies the shape-design approximation and proves convergence of the regularized solutions to the degenerate solution. Section \ref{S4} establishes observability for the approximate equations and then passes to the limit to obtain observability for the original degenerate equation.
	
	\section{Functional Setting and Well-posedness}\label{S2}
	
	In this section, we prepare the weighted framework for the degenerate equation \eqref{03.06.m}. We first introduce the relevant weighted Sobolev spaces and the degenerate operator, then collect the basic inequalities and spectral properties needed later, and finally prove well-posedness together with the regularity estimates that will be used in the approximation and observability arguments.

	\subsection{Solution spaces}

	We define
	\begin{equation*}
		H^{1}(\Omega; w) = \left\{u \in L^2(\Omega) \colon \int_\Om (\nabla u\cdot \nabla u)w\df x<+\iy\right\},
	\end{equation*}
	where $\nabla u=(\pt_{x_1}u, \cdots, \pt_{x_N}u)$. The inner product and the norm on $H^1(\Omega; w)$ are defined by 
	\begin{equation*}
		(u, v)_{H^1(\Omega; w)} =\int_\Om uv\df x+\int_\Om (\nabla u\cdot \nabla v)w\df x, \quad \|u\|_{H^1(\Om;w)}=(u,u)_{H^1(\Om;w)}^\f{1}{2}, 
	\end{equation*}
	respectively. 
	We define
	\begin{equation*}
		H_{0}^1(\Omega; w) = \text{the closure of } C_0^\iy(\Omega) \text{ in } H^1(\Omega; w).
	\end{equation*} 
	We denote by 
	\begin{equation*}
		H^{-1}(\Omega; w)=\mbox{the dual space of } H_{0}^1(\Omega; w) \mbox{ with pivot space } L^2(\Om),
	\end{equation*}
	which is a subspace of $(C_0^\iy(\Om))'$. 
	
	Now, we define 
	\begin{equation*}
		H^2(\Om;w)=\left\{u\in H^1(\Om;w)\colon \int_\Om (\mcA u)^2\df x<+\iy\right\}.
	\end{equation*}
	Its inner product   and norm are defined by 
	\begin{equation*}
		(u,v)_{H^2(\Om;w)}=(u,v)_{H^1(\Om;w)}+\int_\Om (\mcA u)(\mcA v)\df x, \quad \|u\|_{H^2(\Om;w)}=(u,u)_{H^2(\Om;w)}^\f{1}{2}, 
	\end{equation*}
	respectively. It should be emphasized that, in general, $H^2(\Omega; w)$ does not coincide with $H^2(\Omega)$.
	
	It is well known that 
	\begin{equation*}
		H_0^1(\Om;w), \quad H^1(\Om;w), \mbox{ and } H^2(\Om;w)
	\end{equation*} 
	are Hilbert spaces (see \cite{GC,Heinonen}).
	
	We denote 
	\begin{equation*}
		D(\mcA)=H^2(\Om;w)\cap H_0^1(\Om;w).
	\end{equation*}

	The following lemma is a version of Hardy’s inequality.
	
	\begin{lemma}\label{08.15.L1}
		Let $N \geq 2$ and $\alpha \in (0, 1)$. Then, for all $u \in H_0^1(\Omega; w)$, the following inequality holds:
		\begin{equation*}
			(N - 2 + \alpha) \left\||x|^{\frac{\alpha}{2} - 1} u\right\|_{L^2(\Omega)} \leq 2 \|\nabla u\|_{L^2(\Omega; w)}.
		\end{equation*} 
	\end{lemma}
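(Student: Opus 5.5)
The plan is the classical multiplier proof of Hardy's inequality with the radial field $x|x|^{\alpha-2}$. By density of $C_0^\infty(\Omega)$ in $H_0^1(\Omega;w)$, it suffices to prove the inequality for $u\in C_0^\infty(\Omega)$. We then pass to the limit once we know the left-hand side is controlled; this is handled at the end.

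For $u\in C_0^\infty(\Omega)$, extend $u$ by zero to $\mathbb{R}^N$. The key identity is
\begin{equation*}
\Div\big(x|x|^{\alpha-2}\big)=(N+\alpha-2)|x|^{\alpha-2},
\end{equation*}
valid for $x\neq 0$. Since $\alpha-2>-2\ge -N$, the field $x|x|^{\alpha-2}$ is locally integrable, and $|x|^{\alpha-2}\in L^1_{loc}$ when $N\ge 2$ and $\alpha>0$ (indeed $\alpha-2>-N$). To integrate by parts rigorously, we work on $\{|x|>\delta\}$ and let $\delta\to0$. The sphere term is bounded by $\|u\|_\infty^2\,\delta^{\alpha-1}\cdot\delta^{N-1}=\|u\|_\infty^2\,\delta^{N+\alpha-2}\to0$. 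This gives
\begin{equation*}
(N+\alpha-2)\int_\Omega |x|^{\alpha-2}u^2\,dx=-\int_\Omega 2u\,\nabla u\cdot x|x|^{\alpha-2}\,dx .
\end{equation*}
The support of $u$ is compact in $\Omega$ and $0\in\partial\Omega$, so $u$ in fact vanishes near $0$ and the boundary term is trivially absent. The limiting argument is therefore not even needed for smooth compactly supported $u$.

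Next, apply Cauchy–Schwarz to the right-hand side, writing $|x|^{\alpha-1}=|x|^{\alpha/2}\cdot|x|^{\alpha/2-1}$:
\begin{equation*}
(N+\alpha-2)\big\||x|^{\frac\alpha2-1}u\big\|_{L^2}^2\le 2\big\||x|^{\frac\alpha2}\nabla u\big\|_{L^2}\,\big\||x|^{\frac\alpha2-1}u\big\|_{L^2}.
\end{equation*}
Dividing by the (finite) norm $\||x|^{\frac\alpha2-1}u\|_{L^2}$ gives the claim for smooth $u$, with $\|\nabla u\|_{L^2(\Omega;w)}=\||x|^{\alpha/2}\nabla u\|_{L^2}$. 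If that norm is zero, the inequality is trivial.

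Finally, for general $u\in H_0^1(\Omega;w)$, take $u_n\in C_0^\infty(\Omega)$ with $u_n\to u$ in $H^1(\Omega;w)$. Passing to a subsequence, $u_n\to u$ a.e. Fatou's lemma then gives
\begin{equation*}
\||x|^{\frac\alpha2-1}u\|_{L^2}\le\liminf_n\||x|^{\frac\alpha2-1}u_n\|_{L^2}\le \frac{2}{N-2+\alpha}\lim_n\|\nabla u_n\|_{L^2(\Omega;w)}=\frac{2}{N-2+\alpha}\|\nabla u\|_{L^2(\Omega;w)},
\end{equation*}
using that $N-2+\alpha>0$. The only mildly delicate point is this density and limit step, which Fatou resolves without needing to know a priori that the weighted $L^2$ norm of $u$ is finite.
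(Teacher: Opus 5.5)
Your proof is correct, and it is the same multiplier argument the paper uses: the identity $\mathrm{div}(|x|^{\alpha-2}x)=(N+\alpha-2)|x|^{\alpha-2}$, integration by parts against $u^2$, Cauchy--Schwarz, and density. The paper integrates over the regularized domains $\Omega_\varepsilon$ and uses Assumption~\ref{Assumption (H)} to give the boundary term $\int_{\Omega\cap\partial\Omega_\varepsilon}|x|^{\alpha-2}u^2(x\cdot\nu)\,dS$ a favorable sign; you instead observe, correctly, that for $u\in C_0^\infty(\Omega)$ this term vanishes because $\mathrm{supp}\,u$ stays away from $0\in\partial\Omega$, so the lemma does not need that assumption, and your Fatou argument spells out the density passage that the paper leaves implicit.
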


\begin{remark}\label{08.15.R0}
		The estimate in Lemma \ref{08.15.L1} is stated in the form needed for the higher-dimensional framework developed in this paper. Since the focus of the present work is the higher-dimensional boundary-point degenerate setting, we organize the functional analysis in this regime. The one-dimensional case is not the novelty of the paper and can be handled separately by the methods already available in the existing one-dimensional literature.
	\end{remark}

\begin{proof}[Proof of Lemma \ref{08.15.L1}]
		By the density argument,  we only need to prove the case $u\in C_0^\iy(\Om)$. Let $z\in C_0^\iy(\Om)$. Note that 
		\begin{equation*}
			\begin{split}
				&2\int_{\Om_\e}|x|^{\al-2}z(x\cdot \nabla z)\df x\\
				&=\int_{\Om_\e} |x|^{\al-2}x\cdot \nabla z^2\df x=\int_{\Om_\e} \Div(|x|^{\al-2}z^2x)\df x-\int_{\Om_\e}z^2\Div(|x|^{\al-2}x)\df x\\
				&=\int_{\Om\cap \pt\Om_\e}|x|^{\al-2}z^2(x\cdot\nu)\df S-(N+\al-2)\int_{\Om_\e}z^2|x|^{\al-2}\df x, 
			\end{split}
		\end{equation*}
		then from Assumption 
		\ref{Assumption (H)} we get 
		\begin{equation*}
			\begin{split}
				(N+\al-2)\int_{\Om_\e}|x|^{\al-2}z^2\df x
				&\leq  (N+\al-2)\int_{\Om_\e}|x|^{\al-2}z^2\df x-\int_{\Om\cap \pt\Om_\e}|x|^{\al-2}z^2(x\cdot\nu)\df S\\
				&=2\int_{\Om_\e}|x|^{\al-2}z(x\cdot\nabla z)\df x\leq 2\int_{\Om_\e}(|x|^\f{\al-2}{2}|z|)(|x|^\f{\al}{2}|\nabla z|)\df x\\
				&\leq 2\left(\int_{\Om_\e}|x|^{\al-2}z^2\df x\right)^\f{1}{2}\left(\int_{\Om_\e}|x|^\al |\nabla z|^2\df x\right)^\f{1}{2}, 
			\end{split}
		\end{equation*}
		and hence
		\begin{equation*}
			\begin{split}
				(N+\al-2)\left(\int_{\Om_\e}|x|^{\al-2}z^2\df x\right)^\f{1}{2}\leq 2\left(\int_{\Om_\e}|x|^\al |\nabla z|^2\df x\right)^\f{1}{2}.
			\end{split}
		\end{equation*}
		Letting $\e\ra 0$, from $\Om_\e\ra \Om$ as $\e\ra 0$ by Assumption \ref{Assumption (H)}, we get the desired. 
	\end{proof}

	\begin{remark}\label{08.16.R1}
		From Lemma \ref{08.15.L1} and Assumption \ref{Assumption (H)},  we have
		\begin{equation}\label{08.16.8}
			 \int_\Om u^2\df x\leq \f{4M^{2-\al}}{(N-2+\al)^2}\int_\Om w\nabla u\cdot \nabla u\df x
		\end{equation}
		for all $u\in H_0^1(\Om;w)$. This inequality is  a Poincar\'{e} inequality.
		In particular, the norm
		\begin{equation}\label{08.19.2}
			\|u\|_{H_0^1(\Omega; w)} = \left(\int_\Omega (\nabla u \cdot \nabla u) w \, \mathrm{d}x\right)^{\frac{1}{2}}
		\end{equation}
		is an equivalent norm in $H_0^1(\Omega; w)$. Hereafter, we use \eqref{08.19.2} to define the norm of $H_0^1(\Omega; w)$.
	\end{remark}

	\begin{lemma}\label{08.15.L4}
		The embedding $H_0^1(\Omega; w) \hookrightarrow L^2(\Omega)$ is compact.
	\end{lemma}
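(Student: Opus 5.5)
We argue by sequential compactness: take a bounded sequence $(u_n)$ in $H_0^1(\Om;w)$ and produce a subsequence that is Cauchy in $L^2(\Om)$. By density, and since the norm \eqref{08.19.2} is equivalent to the full $H^1(\Om;w)$ norm by Remark \ref{08.16.R1}, we may assume $u_n\in C_0^\iy(\Om)$ with $\int_\Om w|\nabla u_n|^2\df x\le 1$. The compactness will follow from two facts: uniform smallness of the $L^2$ mass near the origin, and classical Rellich compactness away from it.

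\textbf{Step 1: smallness near the degenerate point.} For $\de\in(0,1)$, on $\Om\cap B(0,\de)$ we have $1\le \de^{2-\al}|x|^{\al-2}$. Lemma \ref{08.15.L1} then gives
\begin{equation*}
\int_{\Om\cap B(0,\de)}u^2\df x\le \de^{2-\al}\int_\Om |x|^{\al-2}u^2\df x\le \f{4\de^{2-\al}}{(N-2+\al)^2}\int_\Om w|\nabla u|^2\df x
\end{equation*}
for all $u\in H_0^1(\Om;w)$. Since $\al<1<2$, this is uniformly small in $n$ as $\de\ra0$.

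\textbf{Step 2: compactness away from the point.} On $\Om\setminus \ol{B(0,\de)}$ we have $w\ge \de^\al$. Hence $\|\nabla u_n\|_{L^2(\Om\setminus B(0,\de))}\le \de^{-\al/2}$, and $\|u_n\|_{L^2(\Om)}$ is bounded by \eqref{08.16.8}.

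To apply Rellich on a Lipschitz set, extend $u_n$ by zero outside $\Om$. These zero extensions are bounded in $H^1(\R^N\setminus \ol{B(0,\de)})$, because $u_n$ has compact support in $\Om$. Restrict them to the bounded Lipschitz annulus $B(0,M)\setminus\ol{B(0,\de)}$. The Rellich–Kondrachov theorem then yields a subsequence converging in $L^2(\Om\setminus B(0,\de))$.

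\textbf{Step 3: diagonal argument.} Take $\de_k=1/k$ and extract successive subsequences, then a diagonal subsequence that converges in $L^2(\Om\setminus B(0,\de_k))$ for every $k$. For $\eta>0$, choose $k$ with $8\de_k^{2-\al}/(N-2+\al)^2<\eta$. Step 1 bounds $\|u_n-u_m\|^2_{L^2(\Om\cap B(0,\de_k))}$ by $2$ times that constant times $4$, i.e. by a uniformly small quantity. For large $n,m$, the remaining part $\|u_n-u_m\|^2_{L^2(\Om\setminus B(0,\de_k))}$ is also small. So the diagonal subsequence is Cauchy in $L^2(\Om)$, and the embedding is compact.

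\textbf{Main obstacle.} The only delicate point is Step 1: the degeneracy destroys the uniform $H^1$ bound near $0$. The Hardy inequality of Lemma \ref{08.15.L1} is exactly what controls the mass there, since the weight $|x|^{\al-2}$ blows up at $0$, so the mass concentrated near $0$ carries only the small factor $\de^{2-\al}$. Everything else is classical.
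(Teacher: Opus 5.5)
Your proof is correct and follows the same route as the paper: Hardy's inequality (Lemma \ref{08.15.L1}) makes the $L^2$ mass of the sequence in a small ball around the degenerate point uniformly $O(\delta^{2-\alpha})$, while away from that ball $w\ge\delta^{\alpha}$ gives a uniform $H^1$ bound to which Rellich's theorem applies. The differences are only cosmetic: you apply Rellich on the annulus $B(0,M)\setminus\overline{B(0,\delta)}$ after zero extension rather than on the $C^2$ domains $\Omega_\varepsilon$ from Assumption \ref{Assumption (H)}, and you use a Cauchy diagonal argument rather than first normalizing the weak limit to $0$; your Step~1 also keeps the weight $|x|^{\alpha-2}$ that the paper's display \eqref{03.03.1} inadvertently drops.
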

	
	\begin{proof}
		Let $\{u_n\}_{n\in\N}\s H_0^1(\Om;w)$ be a bounded sequence, i.e., there exists a positive constant $C_0$ such that $\|u_n\|_{H^1(\Om;w)}\leq C_0$. Then there exists a subsequence of $\{u_n\}_{n\in\N}$, still denoted by itself, and $u_0\in H^1(\Om;w)$ such that $u_n\ra u_0$ weakly in $H^1(\Om;w)$. Without loss of generality, we assume $u_0=0$, for otherwise, we replace $u_n$ by $u_n-u_0$. 
		
		Let $\e\in (0,\f{1}{8}R_0)$. Then, from Assumption \ref{Assumption (H)},  we have
		\begin{equation}\label{03.03.1}
			\begin{split}
				\int_{\Om-\Om_\e} u_n^2\df x
				&\leq \int_{B(0,2\e)}u_n^2\df x=\int_{B(0,2\e)}|x|^{2-\al}|x|^{\al-2}u_n^2\df x\\
				&\leq (2\e)^{2-\al}\int_\Om u_n^2\df x\leq \e^{2-\al}2^{2-\al}C_0^2. 
			\end{split}
		\end{equation}
		Now, since 
		\begin{equation*}
			\e^{2-\al}\int_{\Om_\e} |\nabla u_n|^2\df x\leq \int_{\Om_\e} w_\e|\nabla u_n|^2\df x\leq \int_\Om w|\nabla u_n|^2\df x\leq C_0
		\end{equation*}
		by Assumption \ref{Assumption (H)}, 
			we get $\{u_n|_{\Om_\e}\}_{n\in\N}\s H^1(\Om_\e)$. Note that the embedding  $H^1(\Om_\e)\hra L^2(\Om_\e)$ is compact, then there exists a subsequence $\{u_{n_k}\}_{k\in\N}$ of $\{u_n\}_{n\in\N}$ such that $u_{n_k}\ra 0$ strongly in $L^2(\Om_\e)$. Moreover, there exists $k_\e\in\N$, such that $\|u_{n_{k_\e}}\|_{L^2(\Om_\e)}<\f{1}{2}\e$. Together with this and \eqref{03.03.1}, we get $u_{n_{k_\e}}\ra 0$ strongly in $L^2(\Om)$ as $\e\ra 0$.
			This completes the proof of this lemma. 
	\end{proof}
	
\subsection{Spectrum}

Consider the following degenerate elliptic equation
\begin{equation}\label{03.03.2}
	\begin{cases}
		\mcA u=f, &\mbox{in }\Om,\\
		u=0, &\mbox{on }\pt\Om, 
	\end{cases}
\end{equation}
where $f\in L^2(\Om)$. 
We say that $u\in H_0^1(\Om;w)$ is a weak solution to \eqref{03.03.2} with respect to $f$ if
\begin{equation}\label{03.03.3}
	\int_\Om (\nabla u\cdot \nabla v)w\df x=\int_\Om f v\df x
\end{equation}
for all $v\in H_0^1(\Om;w)$. 

\begin{lemma}\label{03.03.L1}
	Let $f\in L^2(\Om)$. Then there exists a unique weak solution $u\in D(\mcA)$ to \eqref{03.03.2}. Moreover, we have
	\begin{equation}\label{03.03.4}
		\int_\Om w|\nabla u|^2\df x\leq C\int_\Om f^2\df x, 
	\end{equation}
	where the positive constant $C$ depends only on $\al$ and $M$. 
\end{lemma}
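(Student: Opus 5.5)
We argue by the Lax--Milgram theorem on the Hilbert space $H_0^1(\Om;w)$, equipped with the norm \eqref{08.19.2} from Remark \ref{08.16.R1}. Define the bilinear form
\begin{equation*}
	a(u,v)=\int_\Om (\nabla u\cdot\nabla v)w\df x, \qquad u,v\in H_0^1(\Om;w).
\end{equation*}
It is continuous by the Cauchy--Schwarz inequality, and it is coercive because $a(u,u)=\|u\|_{H_0^1(\Om;w)}^2$.

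The right-hand side is the functional $\ell(v)=\int_\Om fv\df x$. It is bounded on $H_0^1(\Om;w)$: by the Poincar\'e inequality \eqref{08.16.8},
\begin{equation*}
	|\ell(v)|\leq \|f\|_{L^2(\Om)}\|v\|_{L^2(\Om)}\leq \f{2M^{\f{2-\al}{2}}}{N-2+\al}\|f\|_{L^2(\Om)}\|v\|_{H_0^1(\Om;w)}.
\end{equation*}
Lax--Milgram therefore gives a unique $u\in H_0^1(\Om;w)$ satisfying \eqref{03.03.3} for all $v\in H_0^1(\Om;w)$. Uniqueness also follows directly: if $f=0$, testing with $v=u$ gives $\|u\|_{H_0^1(\Om;w)}=0$.

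For the estimate \eqref{03.03.4}, test \eqref{03.03.3} with $v=u$ and use the same bound:
\begin{equation*}
	\|u\|_{H_0^1(\Om;w)}^2\leq \|f\|_{L^2}\|u\|_{L^2}\leq \f{2M^{\f{2-\al}{2}}}{N-2+\al}\|f\|_{L^2}\|u\|_{H_0^1(\Om;w)}.
\end{equation*}
Dividing by $\|u\|_{H_0^1(\Om;w)}$ yields \eqref{03.03.4} with $C=4M^{2-\al}/(N-2+\al)^2$. Since $N\ge2$, we have $N-2+\al\ge\al$, so $C\le 4M^{2-\al}/\al^2$, a constant depending only on $\al$ and $M$.

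It remains to show $u\in D(\mcA)$, that is, $\mcA u\in L^2(\Om)$. Take $v=\vp\in C_0^\iy(\Om)\s H_0^1(\Om;w)$ in \eqref{03.03.3}. By the definition of the distributional divergence this reads $\langle -\Div(w\nabla u),\vp\rangle=\int_\Om f\vp\df x$, so $\mcA u=f$ in $(C_0^\iy(\Om))'$. Hence $\mcA u=f\in L^2(\Om)$, and since $u\in H_0^1(\Om;w)\s H^1(\Om;w)$ we get $u\in H^2(\Om;w)\cap H_0^1(\Om;w)=D(\mcA)$.

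The only point needing care is the interpretation of $\mcA u$ for $u\in H^1(\Om;w)$. Since $w\nabla u\in L^1_{loc}(\Om)$, because $w^{1/2}\nabla u\in L^2$ and $w^{1/2}$ is bounded, the expression $\mcA u$ is a well-defined distribution, and the identification above is then immediate. I expect no other obstacle, since coercivity comes from the Hardy-based Poincar\'e inequality of Remark \ref{08.16.R1}.
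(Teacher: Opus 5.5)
Your proposal is correct and follows the paper's proof essentially verbatim. Both use Lax--Milgram on $H_0^1(\Om;w)$ with the weighted Dirichlet form, obtain the energy bound by testing with $v=u$ together with the Hardy-based Poincar\'e inequality \eqref{08.16.8}, and read off $\mcA u=f\in L^2(\Om)$ in the distributional sense. Your observation that $N-2+\al\ge\al$ gives $C\le 4M^{2-\al}/\al^2$ is a small addition the paper omits, and it is exactly what justifies the claim that $C$ depends only on $\al$ and $M$.
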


\begin{proof}
	Set 
	\begin{equation*}
		B[u,v]=\int_\Om (\nabla u\cdot \nabla v)w\df x, \mbox{ for all } u,v\in H_0^1(\Om;w). 
	\end{equation*}
	Then $B[u,u]=\|u\|_{H_0^1(\Om;w)}$ and $|B[u,v]|\leq \|u\|_{H_0^1(\Om;w)}\|v\|_{H_0^1(\Om;w)}$. Since $f\in L^2(\Om)\s H^{-1}(\Om;w)$, the Lax--Milgram theorem yields a unique weak solution $u\in H_0^1(\Om;w)$ with respect to $f$ such that 
	\begin{equation*}
		B[u,v]=\int_\Om fv\df x, \mbox{ for all } v\in H_0^1(\Om;w). 
	\end{equation*}
	
	Finally, from $u\in H_0^1(\Om;w)$ and Remark \ref{08.16.R1}, we have
	\begin{equation*}
		\begin{split}
			\int_\Om w|\nabla u|^2\df x=\int_\Om fu\df x\leq \|f\|_{L^2(\Om)}\|u\|_{L^2(\Om)}\leq C\|f\|_{L^2(\Om)}\|u\|_{H_0^1(\Om;w)}, 
		\end{split}
	\end{equation*}
	this proves \eqref{03.03.4}. Moreover, \eqref{03.03.2} yields $\mcA u=f\in L^2(\Om)$. This completes the proof of the lemma. 
\end{proof}
	
From Remark \ref{08.16.R1} and Lemma \ref{08.15.L4} we obtain that the degenerate partial differential operator $\mcA$ has discrete point spectrum 
\begin{equation}\label{11.14.3}
	0<\la_1< \la_2\leq \la_3\leq  \cdots \ra +\iy, 
\end{equation}
i.e., $\la_n\ (n\in\N)$ satisfies the following equation
\begin{equation}\label{12.12.8}
	\begin{cases}
		\mcA \Phi_n=\la_n \Phi_n, & \mbox{in }\Om, \\
			\Phi_n=0, &\mbox{on }\pt\Om.
	\end{cases}
\end{equation}
Moreover, from \eqref{08.16.8},  we get
\begin{equation}\label{11.30.9}
	\la_1=\inf_{0\neq u\in H_0^1(\Om;w)}\f{\int_\Om w\nabla u\cdot \nabla u\df x}{\int_\Om u^2\df x}\geq \f{(N-2+\al)^2}{4M^{2-\al}}.
\end{equation}

\begin{notation} \label{12.11.N1}
We denote by $\Phi_n(x)$ the $n$th eigenfunction of $\mcA$ associated with the eigenvalue $\la_n$ for $n\in\N$. Then $\{\Phi_n\}_{n\in\N}$ is an orthonormal basis of $L^2(\Om)$ and an orthogonal family in $H_0^1(\Om;w)$.
(See \cite[Theorem 7 (pp. 728) in Appendix D]{Evans}, or the proof of Lemma \ref{06.28.L1} in the following.) 
\end{notation}

\begin{lemma}\label{06.28.L1}
	Let $u=\sum_{i=1}^\iy u_i \Phi_i\in H_0^1(\Om;w)$ with $u_i=(u,\Phi_i)_{L^2(\Om)}$ for all $i\in\N$. We have $\nabla u=\sum_{i=1}^\iy u_i\nabla \Phi_i$ and $\|u\|_{H_0^1(\Om;w)}=(\sum_{i=1}^\iy u_i^2\la_i)^\f{1}{2}$, and
	\begin{equation*}
		u\in H^2(\Om;w)\Lra \sum_{i=1}^\iy u_i^2\la_i^2<\iy,
	\end{equation*}
	and
	\begin{equation*}
		\mcA u=\sum_{i=1}^\iy u_i\la_i\Phi_i \mbox{ and } \|\mcA u\|_{L^2(\Om)}=\left(\sum_{i=1}^\iy u_i^2\la_i^2\right)^\f{1}{2}.
	\end{equation*}
\end{lemma}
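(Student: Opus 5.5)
The plan is the standard spectral-calculus argument for a compact self-adjoint resolvent. By Lemma \ref{03.03.L1}, the solution map $G\colon f\mapsto u$ of \eqref{03.03.2} is bounded from $L^2(\Om)$ into $H_0^1(\Om;w)$. By Lemma \ref{08.15.L4}, $G$ is therefore compact as an operator on $L^2(\Om)$. It is self-adjoint and positive, since $(Gf,g)_{L^2}=B[Gf,Gg]$. The spectral theorem for compact self-adjoint operators then produces the orthonormal basis $\{\Phi_n\}$ of $L^2(\Om)$, with $G\Phi_n=\la_n^{-1}\Phi_n$. The weak form \eqref{03.03.3} gives $B[\Phi_i,v]=\la_i(\Phi_i,v)_{L^2}$ for all $v\in H_0^1(\Om;w)$. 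Taking $v=\Phi_j$ shows $B[\Phi_i,\Phi_j]=\la_i\delta_{ij}$, which is the orthogonality in $H_0^1(\Om;w)$ claimed in Notation \ref{12.11.N1}.

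Next, fix $u\in H_0^1(\Om;w)$. Choosing $v=u$ gives $B[\Phi_i,u]=\la_iu_i$. The family $\{\la_i^{-1/2}\Phi_i\}$ is orthonormal in $H_0^1(\Om;w)$ with the inner product $B$, and Bessel's inequality gives $\sum_i\la_iu_i^2\le\|u\|^2_{H_0^1(\Om;w)}$. To get equality I will prove completeness of this family in $H_0^1(\Om;w)$. Suppose $B[v,\Phi_i]=0$ for all $i$. Then $\la_i(v,\Phi_i)_{L^2}=0$ for all $i$, so every $L^2$-coefficient of $v$ vanishes, and $v=0$ by $L^2$-completeness. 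Hence $u=\sum u_i\Phi_i$ converges in $H_0^1(\Om;w)$ and Parseval holds: $\|u\|^2_{H_0^1(\Om;w)}=\sum u_i^2\la_i$. Convergence in $H_0^1(\Om;w)$ means the gradients converge in the weighted space $L^2(\Om;w)$, which gives $\nabla u=\sum u_i\nabla\Phi_i$.

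For the $H^2$ characterization I interpret $\mcA u$ distributionally, i.e.\ through $(\mcA u,\vp)=B[u,\vp]$ for $\vp\in C_0^\iy(\Om)$, and this extends by density to all $\vp\in H_0^1(\Om;w)$ whenever $\mcA u\in L^2(\Om)$. For the forward direction, assume $u\in H^2(\Om;w)$. Then $(\mcA u,\Phi_i)_{L^2}=B[u,\Phi_i]=\la_iu_i$, so $\mcA u=\sum\la_iu_i\Phi_i$ in $L^2(\Om)$, and Parseval gives $\|\mcA u\|^2_{L^2}=\sum u_i^2\la_i^2<\iy$. For the converse, assume $\sum u_i^2\la_i^2<\iy$ and set $f=\sum\la_iu_i\Phi_i\in L^2(\Om)$. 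Then $Gf=\sum u_i\Phi_i=u$, so $u$ is the weak solution with right-hand side $f$. By Lemma \ref{03.03.L1} this gives $u\in D(\mcA)$ with $\mcA u=f$.

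The main obstacle is the density step in the third paragraph: justifying that the distributional identity $(\mcA u,\vp)=B[u,\vp]$ extends from test functions to the eigenfunctions $\Phi_i\in H_0^1(\Om;w)$. This follows once $\mcA u\in L^2(\Om)$, because both sides of the identity are continuous in $\vp$ for the $H_0^1(\Om;w)$ norm: the right side trivially, and the left side by the Poincar\'e inequality \eqref{08.16.8}. Since $C_0^\iy(\Om)$ is dense in $H_0^1(\Om;w)$ by definition, the identity passes to the limit.
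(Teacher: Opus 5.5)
Your argument is correct. Its core is the paper's: completeness of $\{\la_i^{-1/2}\Phi_i\}$ in $H_0^1(\Om;w)$ via $B[v,\Phi_i]=\la_i(v,\Phi_i)_{L^2(\Om)}$, followed by Parseval to get $\nabla u=\sum u_i\nabla\Phi_i$ and the energy identity.

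\textbf{Spectral basis.} You build the $L^2(\Om)$-orthonormal eigenbasis and the weak eigen-relation from the compact self-adjoint solution operator $G$ (Lemmas \ref{03.03.L1} and \ref{08.15.L4}). The paper simply takes these from Notation \ref{12.11.N1}, citing the abstract spectral theorem. Add the one-line observation that $\ker G=\{0\}$: if $Gf=0$, then $(f,v)_{L^2(\Om)}=0$ for all $v\in C_0^\iy(\Om)$, so $f=0$. This is what guarantees that the eigenvectors span all of $L^2(\Om)$ with finite $\la_n$.

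\textbf{Forward direction of the $H^2(\Om;w)$ characterization.} The paper tests $\mcA u$ against the truncations $\vp_n=\sum_{i\le n}u_i\Phi_i$ and applies Cauchy--Schwarz. This yields only $\sum u_i^2\la_i^2\le\|\mcA u\|_{L^2(\Om)}^2$, and equality of norms comes later from the converse. You instead read off the coefficients $(\mcA u,\Phi_i)_{L^2(\Om)}=\la_iu_i$ directly, which gives $\mcA u=\sum\la_iu_i\Phi_i$ and the norm identity at once. Your explicit density argument extending $(\mcA u,\vp)_{L^2(\Om)}=B[u,\vp]$ to $\vp\in H_0^1(\Om;w)$ is exactly what the paper uses tacitly in the step $\int_\Om(\mcA u)\Phi_i\,\df x=\int_\Om u\,\mcA\Phi_i\,\df x$.

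\textbf{Converse.} The paper tests against $\vp\in C_0^\iy(\Om)$ using the $H_0^1(\Om;w)$-convergent expansion. You identify $u=Gf$ with $f=\sum\la_iu_i\Phi_i$ and invoke Lemma \ref{03.03.L1}. Both are equally short; yours reuses the operator you already constructed.
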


\begin{proof}
	From \eqref{11.14.3}, we have
	\begin{equation}\label{06.04.3}
		\int_\Om (\nabla \Phi_k\cdot\nabla \Phi_l)w\df x=\de_{kl}\la_k \mbox{ for } k,l\in \N,
	\end{equation}
	where $\de_{kl}$ is the Kronecker delta function, defined as $\de_{kl}=1$ for $k=l$ and $\de_{kl}=0$ for $k\neq l$.
	We now demonstrate that $\{\la_k^{-\f{1}{2}}\Phi_k\}_{k=1}^\iy$ forms an orthonormal basis of $H_0^1(\Om;w)$.
	From \eqref{06.04.3}, it is straightforward to verify that $\{\la_k^{-\f{1}{2}}\Phi_k\}_{k=1}^\iy$ is an orthonormal subset of $H_0^1(\Om;w)$. To prove it is a basis, assume by contradiction that there exists $0\neq u\in H_0^1(\Om;w)$ such that
	\begin{equation*}
		\int_\Om (\nabla \Phi_k\cdot \nabla u)w\df x=0 \mbox{ for all }k\in\N.
	\end{equation*}
	Given that $\{\Phi_k\}_{k\in\N}$ is an orthonormal basis of $L^2(\Om)$, for $u\in H_0^1(\Om;w)$, we can express
	\begin{equation}\label{06.04.4}
		u=\sum_{k=1}^\iy d_k\Phi_k \mbox{ where }  d_k=(u, \Phi_k)_{L^2(\Om)}, k\in\N.
	\end{equation}
	Then  we have
	\begin{equation*}
		0=(u,\Phi_k)_{H_0^1(\Om;w)}=\int_\Om (\nabla \Phi_k\cdot \nabla u)w\df x=\la_k\int_\Om \Phi_ku\df x =\la_kd_k,
	\end{equation*}
	which implies $d_k=0$. This leads to $u=0$, a contradiction. 
	
	From above, since $u\in H_0^1(\Om;w)$,  we have 
	\begin{equation*}
		\nabla u=\sum_{i=1}^\iy e_i\la_i^{-\f{1}{2}}\nabla\Phi_k, \mbox{ and } \|u\|_{H_0^1(\Om;w)}^2=\sum_{i=1}^\iy e_i^2,
	\end{equation*}
	and 
	\begin{equation*}
		e_i=\int_\Om \left(\nabla u\cdot \la_i^{-\f{1}{2}}\nabla\Phi_i\right) w\df x=\la_i^{-\f{1}{2}}\int_\Om (\nabla u\cdot \nabla \Phi_i)w\df x=\la_i^\f{1}{2}\int_\Om u\Phi_i\df x=\la_i^\f{1}{2}u_i, 
	\end{equation*} 
	hence,  $\nabla u=\sum_{i=1}^\iy u_i\nabla\Phi_i$. Moreover, $\|u\|_{H_0^1(\Om;w)}=(\sum_{i=1}^\iy \la_iu_i^2)^\f{1}{2}$. 
	
	Let $u\in H^2(\Om;w)$. Take $\vp_n=\sum_{i=1}^n u_i\Phi_i\in H_0^1(\Om;w)\cap H^2(\Om;w)$ for each $n\in\N$, then from
	\begin{equation*}
		\begin{split}
			(\mcA u, \mcA\vp_n)_{L^2(\Om)}
			&=\sum_{i=1}^n u_i\la_i \int_\Om (\mcA u)\Phi_i\df x=\sum_{i=1}^n u_i\la_i\int_\Om u\mcA\Phi_i\df x\\
			&=\sum_{i=1}^n u_i\la_i^2\int_\Om u\Phi_i\df x=\sum_{i=1}^n u_i^2\la_i^2
		\end{split}
	\end{equation*}
	and $\|\mcA\vp_n\|_{L^2(\Om)}^2=\sum_{i=1}^n u_i^2\la_i^2$ we obtain
	\begin{equation*}
		\sum_{i=1}^nu_i^2\la_i^2\leq \|\mcA u\|_{L^2(\Om)}^2
	\end{equation*}
	for all $n\in\N$ by Cauchy inequality. This implies that $\sum_{i=1}^\iy u_i^2\la_i^2\leq \|\mcA u\|_{L^2(\Om)}^2<\iy$.

	Let $\sum_{i=1}^\iy u_i^2\la_i^2<\iy$. For each  $\vp\in C_0^\iy(\Om)$,  we have
	\begin{equation*}
		\begin{split}
			(\mcA u, \vp)_{L^2(\Om)}
			&=\int_\Om (\nabla u\cdot \nabla\vp)w\df x=\sum_{i=1}^\iy u_i \int_\Om (\nabla\Phi_i\cdot \nabla\vp)w\df x=\sum_{i=1}^\iy u_i\la_i\int_\Om \Phi_i\vp\df x.
		\end{split}
	\end{equation*}
	Note that
	\begin{equation*}
		\int_\Om \left(\sum_{i=1}^n u_i\la_i\Phi_i\right)^2\df x=\sum_{i=1}^n u_i^2\la_i^2\leq \sum_{i=1}^\iy u_i^2\la_i^2<\iy \mbox{ for all } n\in\N,
	\end{equation*}
	i.e., $\sum_{i=1}^\iy u_i\la_i\Phi_i\in L^2(\Om)$. Hence
	\begin{equation*}
		(\mcA u, \vp)_{L^2(\Om)}=\left(\sum_{i=1}^\iy u_i\la_i\Phi_i, \vp\right)_{L^2(\Om)}.
	\end{equation*}
	This implies that $\|\mcA u\|_{L^2(\Om)}\leq \sum_{i=1}^\iy u_i^2\la_i^2$ and $\mcA u=\sum_{i=1}^\iy u_i\la_i\Phi_i$.
\end{proof}

\subsection{Existence of weak solution} 

\begin{definition}\label{12.11.D1}
	We say that 
	\begin{equation*}
		y\in L^2(0,T; H_0^1(\Om;w))\cap H^1(0,T; L^2(\Om))\cap H^2(0,T; H^{-1}(\Om;w))
	\end{equation*}
	is a weak solution of \eqref{03.06.m} with respect to $(y^0,y^1,f)$ if
	
	(i) for every $v\in H_0^1(\Om;w)$ and a.e. $t\in (0,T)$ we have 
	\begin{equation*}
		\lg\pt_{tt}y, v\rg_{H^{-1}(\Om;w),H_0^1(\Om;w)}+\int_\Om w\nabla y\cdot \nabla v\df x=\int_\Om fv\df x, 
	\end{equation*}
	
	(ii) $y(0)=y^0$ and $\pt_ty(0)=y^1$. 
\end{definition}

The following lemma, which appears as \cite[Lemma 2.3 (p.~61)]{Bellassoued}, will be employed in the proof of Theorem \ref{12.11.T1}.

\begin{lemma}\label{11.30.L2}
	Let $(\mcM,g)$ be a $C^m$-Riemannian manifold with compact boundary $\pt \mcM$. Then there exists a $C^{m-1}$-vector field $\bs{n}$ such that 
	\begin{equation*}
		\bs{n}(x)=\nu(x), \ x\in\pt \mcM, \mbox{ and } |\bs{n}(x)|\leq 1, \ x\in \mcM, 
	\end{equation*}
	where $\nu$ is the unit outward normal vector to $\pt \mcM$. 
\end{lemma}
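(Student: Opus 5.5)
The idea is to build $\bs{n}$ from the distance function to the boundary, cut off near $\pt\mcM$, and extend by zero. Let $d(x)=\dist_g(x,\pt\mcM)$ be the Riemannian distance from $x\in\mcM$ to the boundary. I will then set
\[
\bs{n}(x)=-\chi(d(x))\,\nabla_g d(x),
\]
where $\chi\in C^\iy([0,\iy))$ satisfies $0\le\chi\le1$, $\chi\equiv1$ on $[0,\de/4]$ and $\chi\equiv0$ on $[\de/2,\iy)$. Here $\de>0$ is the width of a collar on which $d$ is regular. I take $\bs{n}=0$ outside that collar.

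\textbf{Step 1: the collar.} Since $\pt\mcM$ is a compact $C^m$ hypersurface, its inward unit normal $-\nu$ is a $C^{m-1}$ field along $\pt\mcM$. Consider the normal exponential map
\[
E(p,s)=\exp_p(-s\nu(p)),\qquad (p,s)\in\pt\mcM\times[0,\de).
\]
Its differential at $s=0$ is invertible. By the inverse function theorem and compactness of $\pt\mcM$, there is a uniform $\de>0$ such that $E$ is a diffeomorphism onto a collar $U_\de=\{x\in\mcM\colon d(x)<\de\}$. Moreover $d(E(p,s))=s$ on $U_\de$, and the uniqueness of nearest points there gives a well-defined projection $\pi(E(p,s))=p$.

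\textbf{Step 2: regularity of $d$.} This is the step I expect to be the main obstacle. A naive count says $E$ is only $C^{m-1}$ because $\nu$ is, so $d=s\circ E^{-1}$ would only be $C^{m-1}$ and $\nabla_g d$ only $C^{m-2}$. That loses one derivative too many. I would first try to recover it by the classical argument of Gilbarg--Trudinger (Lemma 14.16), adapted to the metric $g$. On $U_\de$ one has the identity $\nabla_g d(x)=-\tilde\nu(x)$, where $\tilde\nu(x)$ is the parallel transport of $\nu(\pi(x))$ along the normal geodesic. The map $\pi$ is obtained by the implicit function theorem from the $C^{m-1}$ condition "$x-\pi(x)$ is normal at $\pi(x)$", so $\pi$ is $C^{m-1}$. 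Hence $\nabla_g d$ is $C^{m-1}$, and so $d\in C^m(U_\de)$.

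If this bookkeeping becomes delicate on a general manifold, I have a fallback that gives exactly $C^{m-1}$ with no need for optimal regularity of $d$. Define $\bs{n}(x)=\chi(d(x))\,P_x\nu(\pi(x))$, where $P_x$ is parallel transport along the normal geodesic. In the coordinates $(p,s)$ of the collar this field is visibly $C^{m-1}$: it is $-\pt_s$, and $\chi(d)=\chi(s)$ is smooth in $s$.

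\textbf{Step 3: the two required properties.} On $\pt\mcM$ we have $d=0$ and $\chi(0)=1$. The normal geodesics leave the boundary in the direction $-\nu$, so $\nabla_g d=-\nu$ there, which gives $\bs{n}=\nu$ on $\pt\mcM$. Along each normal geodesic $|\nabla_g d|_g=1$ (the eikonal property of $d$ in the collar; in the fallback, parallel transport preserves length). Therefore $|\bs{n}|_g=\chi(d)\le1$ on $U_\de$, and $\bs{n}=0$ elsewhere.

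\textbf{Step 4: global regularity.} Because $\chi(d)$ vanishes identically for $d\ge\de/2$, the zero extension is $C^{m-1}$ on all of $\mcM$. In the Euclidean setting of the paper, where $g$ is the identity, $|\cdot|_g$ is the ordinary norm, which is the version used later with $\mcM=\ol{\Om}$ or $\ol{\Om_\e}$.
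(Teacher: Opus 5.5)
The paper gives no proof of this lemma; it quotes it from Bellassoued's monograph. So your argument has to stand on its own. The collar construction (modulo routine details) and Steps 3--4 are fine. In the Euclidean situation where the paper actually uses the lemma ($\mcM=\ol\Om$, $g$ the identity), your main route is exactly Gilbarg--Trudinger's Lemma 14.16, because there the transported normal is just $\nu\circ\pi$ with $\nu,\pi\in C^{m-1}$.

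For a general metric, however, Step 2 is asserted rather than proved, and this is the step you yourself single out as the crux. The condition ``$x-\pi(x)$ is normal at $\pi(x)$'' has no meaning on a manifold. More to the point, $\pi\in C^{m-1}$ does not give $\tilde\nu\in C^{m-1}$. Indeed $\tilde\nu(x)=-\pt_sE(\pi(x),d(x))$ involves parallel transport along a geodesic that moves with $x$, and $\pt_sE$, being a derivative of the $C^{m-1}$ map $E$, is a priori only $C^{m-2}$.

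The fallback does not get around this. The field $P_x\nu(\pi(x))$ is the same field $-\nabla_g d$. Saying ``it is $-\pt_s$ in the coordinates $(p,s)$'' proves nothing, because $(p,s)$ is a chart defined by the $C^{m-1}$ diffeomorphism $E$, and the coordinate fields of a $C^{m-1}$ chart are in general only $C^{m-2}$. As written, both routes therefore give only $C^{m-2}$, which is exactly the loss you were trying to avoid.

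The missing observation is that $\pt_sE$ is not merely a derivative of $E$ but the velocity component of the geodesic flow: $(E(p,s),\pt_sE(p,s))=G_s(p,-\nu(p))$, where $G_s$ is the flow on $T\mcM$ of the spray $(x,v)\mapsto(v,-\Gamma(x)(v,v))$. If $g\in C^m$, the Christoffel symbols are $C^{m-1}$, so $G$ is $C^{m-1}$ jointly in $(s,x,v)$. Hence $(p,s)\mapsto\pt_sE(p,s)$ is $C^{m-1}$, and $\tilde\nu=-(\pt_sE)\circ E^{-1}$ is $C^{m-1}$ for $m\ge 2$. This closes your argument and also gives $d\in C^m$ on the collar.

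Note that this uses $g\in C^m$. If ``$C^m$-Riemannian manifold'' is read as a $C^m$ manifold carrying a $C^{m-1}$ metric (the invariant regularity there), the geodesic construction only gives $C^{m-2}$.

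A simpler route needs no geodesics, no distance function and no injectivity argument, and works with $g\in C^{m-1}$:
\begin{enumerate}
\item Take any $C^{m-1}$ field $V$ near $\pt\mcM$ that is a positive multiple of $\nu$ on $\pt\mcM$. For instance, use boundary charts and a partition of unity, or take $V=\nabla_g\rho$ for a $C^m$ defining function $\rho$ of $\pt\mcM$ with $\rho<0$ in the interior.
\item Let $U$ be an open neighbourhood of $\pt\mcM$ on which $V\neq 0$.
\item Set $\bs n=\chi V/|V|_g$, where $\chi\in C^m$, $0\le\chi\le 1$, $\chi=1$ on $\pt\mcM$ and $\supp\chi\s U$.
\end{enumerate}
Then $\bs n\in C^{m-1}$, $\bs n=\nu$ on $\pt\mcM$, and $|\bs n|_g=\chi\le 1$.
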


Now, we are in a position to establish the existence of weak solutions to \eqref{12.11.1}.

\begin{theorem}\label{12.11.T1}
	Let $y^0\in H_0^1(\Om;w)$ and $y^1\in L^2(\Om)$ and $f\in L^2(Q)$. Under Assumption \ref{Assumption (H)}, there exists a unique weak solution 
	\begin{equation*}
		y\in L^2(0,T; H_0^1(\Om;w))\cap H^1(0,T; L^2(\Om))\cap H^2(0,T; H^{-1}(\Om;w))
	\end{equation*} 
	to the equation \eqref{03.06.m} with respect to $(y^0,y^1,f)$, and  we have 
	\begin{equation}\label{12.11.1}
		\begin{split}
			&\esssup_{t\in (0,T)}\left(\|y(t)\|_{H_0^1(\Om;w)}+\|\pt_ty(t)\|_{L^2(\Om)}\right)+\|\pt_{tt}y\|_{L^2(0,T; H^{-1}(\Om;w))}\\
			&+\left\|\f{\pt y}{\pt \nu}\right\|_{L^2(0,T; L^2(\pt\Om-B(0,\f{1}{2}R_0)))}\\
			&\leq C\left(\|y^0\|_{H_0^1(\Om;w)}+\|y^1\|_{L^2(\Om)}+\|f\|_{L^2(Q)}\right),
		\end{split}
	\end{equation}
	where the positive constant $C$ depends only on $\al, R_0, M, T$, and $\Om-B(0,\f{1}{2}R_0)$. 
	
	Furthermore, if in addition $y^0\in D(\mcA)$ and $y^1\in H_0^1(\Om;w)$ and $f\in H^1(0,T; L^2(\Om))$, then 
	\begin{equation*}
		y\in L^2(0,T; D(\mcA))\cap H^1(0,T; H_0^1(\Om;w))\cap H^2(0,T; L^2(\Om)), 
	\end{equation*}
	and we have 
	\begin{equation}\label{12.11.2}
		\begin{split}
			&\esssup_{t\in (0,T)}\left(\|y(t)\|_{H^2(\Om-B(0,R_0))}+\|y(t)\|_{D(\mcA)}+\|\pt_ty(t)\|_{H_0^1(\Om;w)}+\|\pt_{tt}y(t)\|_{L^2(\Om)}\right)\\
			&+\left\|\f{\pt y}{\pt \nu}\right\|_{H^1(0,T; L^2(\pt\Om-B(0,\f{1}{2}R_0)))}\\
			&\leq C\left(\|y^0\|_{D(\mcA)}+\|y^1\|_{H_0^1(\Om;w)}+\|f\|_{H^1(0,T; L^2(\Om))}\right), 
		\end{split}
	\end{equation}
	where the positive constant $C$ depends only on $\al, R_0, M, T$, and $\Om-B(0,\f{1}{2}R_0)$. 
\end{theorem}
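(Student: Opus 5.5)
The plan is a Faedo--Galerkin construction in the eigenbasis $\{\Phi_n\}$ of Notation \ref{12.11.N1}, followed by a local multiplier argument away from the degenerate point to control the normal derivative.

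\emph{Galerkin scheme and energy bounds.} Expand $y_m(t)=\sum_{k=1}^m g_k^m(t)\Phi_k$. The initial data $g_k^m(0)$, $\dot g_k^m(0)$ are the projections of $y^0,y^1$. The ODE system $\ddot g_k^m+\la_k g_k^m=(f(t),\Phi_k)_{L^2(\Om)}$ is decoupled and linear, so it is solvable on $[0,T]$. Multiplying by $\dot g_k^m$ and summing, and using Lemma \ref{06.28.L1} to identify $\sum\la_k (g_k^m)^2=\|y_m\|_{H_0^1(\Om;w)}^2$, gives
\[
\|y_m(t)\|_{H_0^1(\Om;w)}^2+\|\pt_t y_m(t)\|_{L^2(\Om)}^2\le e^{T}\Bigl(\|y^0\|_{H_0^1(\Om;w)}^2+\|y^1\|_{L^2(\Om)}^2+\|f\|_{L^2(Q)}^2\Bigr)
\]
by Gronwall. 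Duality against $v\in H_0^1(\Om;w)$ bounds $\pt_{tt}y_m$ in $L^2(0,T;H^{-1}(\Om;w))$, after splitting $v$ into its projection and using orthogonality of $\Phi_k$ in both inner products. Weak-$*$ compactness yields a limit $y$ satisfying (i) of Definition \ref{12.11.D1}. The initial conditions follow in the standard way, testing with $\phi(t)v$, $\phi(T)=0$. Uniqueness follows from the Lions--Magenes energy argument applied to the difference with zero data: use the test function $v(t)=\int_t^s y(\tau)\df\tau$, which is legitimate because $D(\mcA)$ and the eigen-expansion give density. Alternatively, since solutions are expressible in the basis, each coefficient solves a scalar ODE with zero data and so vanishes.

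\emph{Higher regularity.} For $y^0\in D(\mcA)$, $y^1\in H_0^1(\Om;w)$, $f\in H^1(0,T;L^2)$, differentiate the Galerkin system in time; equivalently, multiply by $\la_k\dot g_k^m$. This gives bounds on $\pt_t y_m$ in $H_0^1(\Om;w)$ and on $\pt_{tt}y_m$ in $L^2$, where $\pt_{tt}y_m(0)$ is controlled by $\|\mcA y^0\|+\|f(0)\|$, using $\|f(0)\|\le C\|f\|_{H^1(0,T;L^2)}$. Then $\mcA y=f-\pt_{tt}y\in L^2$ gives the $D(\mcA)$ bound by Lemma \ref{06.28.L1}. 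On $\Om-B(0,\f14 R_0)$ the weight satisfies $w\ge (R_0/4)^\al$ and is smooth, so the equation is uniformly elliptic there for each fixed $t$. Standard interior-and-boundary $H^2$ regularity for $C^2$ domains, applied with a cutoff $\chi$ supported away from $B(0,\f14R_0)$ to the function $\chi y(t)$, bounds $\|y(t)\|_{H^2(\Om-B(0,R_0))}$ by $\|\mcA y(t)\|_{L^2}+\|y(t)\|_{H_0^1(\Om;w)}$. The trace theorem then gives $\pt_\nu y\in H^1(0,T;L^2(\pt\Om-B(0,\f12R_0)))$; to apply it on $\pt\Om-B(0,\f12R_0)$ itself, I would use a slightly larger region, e.g.\ $H^2$ on $\Om- B(0,\f38R_0)$ via nested cutoffs. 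Applying the same argument to $\pt_t y$, which solves the equation with data $\pt_t f$, controls the time derivative of the trace.

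\emph{Hidden regularity of $\pt_\nu y$ for weak data.} This is the main obstacle: the normal derivative bound in \eqref{12.11.1} must hold with only energy-level data. I would first prove it for smooth data, where the regularity above justifies integration by parts, and then extend it by density. For the multiplier, take the vector field $\bs n$ from Lemma \ref{11.30.L2} and set $h=\chi\bs n$. Here $\chi\in C^\iy$ equals $1$ outside $B(0,\f12R_0)$ and vanishes on $B(0,\f14R_0)$, so $h$ is $C^1$ and vanishes near the degenerate point. Multiply the equation by $h\cdot\nabla y$ and integrate over $Q$. Since $y=0$ on $\pt\Om$, we have $\nabla y=\pt_\nu y\,\nu$ there. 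The boundary term is therefore $\f12\int_0^T\!\int_{\pt\Om}w\chi|\pt_\nu y|^2$, which dominates $c\int_0^T\!\int_{\pt\Om-B(0,\f12R_0)}|\pt_\nu y|^2$ because $w\ge(R_0/4)^\al$ on $\supp\chi$. The interior terms involve only $|\nabla h|$, $|\nabla w|$ and $|x|^{\al}$ on $\supp\chi$, where $w$ is bounded above and below and $C^1$. They are thus bounded by $C\int_Q(w|\nabla y|^2+|\pt_t y|^2+f^2)$, plus the endpoint terms $\int_\Om\pt_t y\,h\cdot\nabla y$ at $t=0,T$, which are again energy-bounded. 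Combining this with the energy estimate gives the trace bound for smooth data. Approximating general data in $H_0^1(\Om;w)\times L^2\times L^2(Q)$ by data in $D(\mcA)\times H_0^1(\Om;w)\times H^1(0,T;L^2)$ and using linearity makes the traces Cauchy in $L^2(0,T;L^2(\pt\Om-B(0,\f12R_0)))$, which proves \eqref{12.11.1}.
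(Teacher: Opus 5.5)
Your proposal is correct and follows essentially the same route as the paper: an eigenfunction Galerkin scheme, Gronwall energy bounds, Lions--Magenes uniqueness, a time-differentiated Galerkin system plus cutoff elliptic regularity away from the origin, and the multiplier $\chi\,\bs{n}\cdot\nabla y$ built from Lemma \ref{11.30.L2}; your explicit density step from regular data to energy-level data makes rigorous what the paper's Step 9 leaves implicit. One point to fix: the time derivative of the trace in \eqref{12.11.2} cannot come from applying the $H^2$/trace argument to $\pt_t y$, because $\pt_t y$ has only energy-level data $(y^1, f(0)-\mcA y^0, \pt_t f)$ and need not be locally $H^2$. Instead, apply your hidden-regularity estimate \eqref{12.11.1} to $\pt_t y$, exactly as the paper does in Step 10.
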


\begin{proof}
	We will prove this theorem by the following steps. 
	
	{\it Step 1}. We use the Galerkin method. 
	
	For each $m\in\N$, define 
	\begin{equation}\label{12.11.3}
		y^m(x,t)=\sum_{n=1}^m y_n^m(t)\Phi_n(x), \quad f^m(x,t)=\sum_{n=1}^m f_n^m(t)\Phi_n(x), 
	\end{equation}
	where $y_n^m(t)\ (n\leq m)$ satisfies the following equation 
	\begin{equation}\label{12.11.4}
		\pt_{tt}y_n^m(t)+\la_ny_n^m(t)=f_n^m(t), \ n=1,\cdots, m
	\end{equation}
	with initial data 
	\begin{equation}\label{12.11.6}
		y_n^m(0)=y_n^0,\quad  \pt_t y_n^m(0)=y_n^1, \ n=1,\cdots, m, 
	\end{equation}
	and $(n=1,\cdots, m)$
	\begin{equation}\label{12.11.5}
		f_n^m(t)=(f(t), \Phi_n)_{L^2(\Om)}, \quad y_n^0=(y^0,\Phi_n)_{L^2(\Om)},\quad y_n^1=(y^1,\Phi_n)_{L^2(\Om)}.  
	\end{equation}
	According to standard theory for ordinary differential equations, there exists $m$  functions $y_1^m, \cdots, y_m^m$ satisfying the equation \eqref{12.11.4} with initial data \eqref{12.11.6} and \eqref{12.11.5}. Note that \eqref{12.11.4} is equivalent to the following equality (using  Lemma \ref{06.28.L1})
	\begin{equation}\label{03.04.1}
		(\pt_{tt}y^m, \Phi_n)_{L^2(\Om)}+(y^m, \Phi_n)_{H_0^1(\Om;w)}=(f^m, \Phi_n)_{L^2(\Om)}, \ n=1,\cdots, m. 
	\end{equation}
	
	{\it Step 2}. Energy estimate. 
	
	Multiplying \eqref{03.04.1} by $\f{\df }{\df t}y_n^m(t)$, summing $k=1,\cdots, m$, then we have 
	\begin{equation}\label{12.12.1}
		\begin{split}
			\left(\pt_{tt} y^m, \pt_ty^m\right)_{L^2(\Om)}+(y^m,\pt_ty^m)_{H_0^1(\Om;w)}=(f^m, \pt_ty^m)_{L^2(\Om)}
		\end{split}
	\end{equation}
	for a.e. $t\in (0,T)$. Note that $(\pt_{tt}y^m, \pt_ty^m)_{L^2(\Om)}=\f{\df}{\df t}(\f{1}{2}\|\pt_ty^m\|_{L^2(\Om)}^2)$, and $(y^m,\pt_ty^m)_{H_0^1(\Om;w)}=\f{\df}{\df t}(\f{1}{2}\|y^m\|_{H_0^1(\Om;w)}^2)$, then we have 
	\begin{equation*}
		\begin{split}
			\f{\df}{\df t}\left(\|\pt_ty^m\|_{L^2(\Om)}^2+\|y^m\|_{H_0^1(\Om;w)}^2\right)\leq \|f^m(t)\|_{L^2(Q)}^2+\|\pt_ty^m\|_{L^2(\Om)}^2. 
		\end{split}
	\end{equation*}
	From Gronwall's inequality \cite[Appendix B.2 (j), p.~708]{Evans} we obtain 
	\begin{equation*}
		\begin{split}
			\|\pt_ty^m(t)\|_{L^2(\Om)}^2+\|y^m(t)\|_{H_0^1(\Om;w)}^2
			&\leq e^t\left(\|\pt_ty^m(0)\|_{L^2(\Om)}^2+\|y^m(0)\|_{H_0^1(\Om;w)}^2+\int_0^t\|f^m(s)\|_{L^2(\Om)}^2\df s\right) \\
			&\leq e^T\left(\|y^1\|_{L^2(\Om)}^2+\|y^0\|_{H_0^1(\Om;w)}^2+\|f\|_{L^2(Q)}^2\right)
		\end{split}
	\end{equation*}
	for each $t\in [0,T]$. 
	This implies that 
	\begin{equation}\label{12.12.2}
		\esssup_{t\in [0,T]}\left(\|\pt_ty^m\|_{L^2(\Om)}^2+\|y^m\|_{H_0^1(\Om;w)}^2\right)\leq e^T\left(\|y^0\|_{H_0^1(\Om;w)}^2+\|y^1\|_{L^2(\Om)}^2+\|f\|_{L^2(Q)}^2\right).  
	\end{equation} 
	
	{\it Step 3}. Energy estimate (continued). 
	
	Let $v\in H_0^1(\Om;w), \|v\|_{H_0^1(\Om;w)}\leq 1$, then we can write $v=v^1+v^2$, where $v^1\in \Span \{\Phi_n\}_{n=1}^m$ and $(v^2, \Phi_n)_{L^2(\Om)}=0$ for all $k=1,\cdots, m$. Note that from \eqref{12.11.4} we have 
	\begin{equation*}
		\lg \pt_{tt}y^m, v\rg_{H^{-1}(\Om;w),H_0^1(\Om;w)}=(\pt_{tt}y^m, v)_{L^2(\Om)}=(\pt_{tt}y^m, v^1)_{L^2(\Om)}=(f^m,v^1)_{L^2(\Om)}-(y^m,v^1)_{H_0^1(\Om;w)}, 
	\end{equation*}
	from H\"older inequality and  $\|v^1\|_{H_0^1(\Om;w)}\leq 1$ (see Lemma \ref{06.28.L1}) and \eqref{08.16.8} we obtain 
	\begin{equation*}
		\begin{split}
			\left|\lg \pt_{tt}y^m, v\rg_{H^{-1}(\Om;w),H_0^1(\Om;w)}\right|\leq C\left(\|f^m\|_{L^2(\Om)}+\|y^m\|_{H_0^1(\Om;w)}\right),
		\end{split}
	\end{equation*}
	where the constant $C>0$ depends only on $\al$ and $M$. This together with \eqref{12.12.2} we obtain 
	\begin{equation}\label{12.12.3}
		\begin{split}
			\int_0^T \|\pt_{tt}y^m\|_{H^{-1}(\Om;w)}^2\df t\leq C\left(\|y^0\|_{H_0^1(\Om;w)}^2+\|y^1\|_{L^2(\Om)}^2+\|f\|_{L^2(Q)}^2\right), 
		\end{split}
	\end{equation}
	where the constant $C>0$ depends only on $\al, T$ and $M$. 
	
	{\it Step 4}. Weak convergence. 
	
		From \eqref{12.12.2} and \eqref{12.12.3}, there exists a subsequence of $\{y^m\}_{m\in\N}$, still denoted by itself, and
	\begin{equation*}
		\wh y\in L^2(0,T; H_0^1(\Om;w))\cap H^1(0,T; L^2(\Om))\cap H^2(0,T;H^{-1}(\Om;w))
	\end{equation*}
	such that 
	\begin{equation}\label{12.12.4}
		\begin{split}
			y^m
			&\ra \wh y\ \ \  \ \mbox{ weakly star in } L^\iy(0,T; H_0^1(\Om;w)),\\
			\pt_ty^m
			&\ra \pt_t \wh y\  \ \!\mbox{  weakly star in } L^\iy(0,T; L^2(\Om)),\\
			\pt_{tt}y^m
			&\ra \pt_{tt}\wh y\ \mbox{ weakly in } L^2(0,T; H^{-1}(\Om;w)). 
		\end{split}
	\end{equation}
	Letting $m\ra\iy$, from \eqref{12.12.2} and \eqref{12.12.3},  we get
	\begin{equation}\label{12.12.12}
		\begin{split} 
		&\esssup_{t\in [0,T]}\left(\|\pt_t\wh y\|_{L^2(\Om)}^2+\|\wh y\|_{H_0^1(\Om;w)}^2\right)+\|\pt_{tt}\wh y\|_{L^2(0,T; H^{-1}(\Om;w))}\\
		&\leq C\left(\|y^0\|_{H_0^1(\Om;w)}^2+\|y^1\|_{L^2(\Om)}^2+\|f\|_{L^2(Q)}^2\right)  
		\end{split} 
	\end{equation} 
	for a.e. $t\in [0,T]$, where the constant $C>0$ depends only on $\al, T$ and $M$.  
	
	{\it Step 5}. We verify $\wh y$ satisfies Definition \ref{12.11.D1} (i). 
	
	Let $\psi(t)=\sum_{n=1}^k\psi_n(t)\Phi_n$, where $\{\psi_n(t)\}_{n=1}^k\s C^2[0,T]$. Multiplying \eqref{03.04.1} by $\psi_n(t)$, summing $n=1,\cdots, k$, integrating on  $[0,T]$, when $m\geq k$, we have 
	\begin{equation}\label{12.12.7}
		\int_0^T \lg \pt_{tt}y^m, v\rg_{H^{-1}(\Om;w),H_0^1(\Om;w)}\df t+\int_0^T (y^m, v)_{H_0^1(\Om;w)}\df t=\int_0^T (f^m,v)_{L^2(\Om)}\df t, 
	\end{equation}
	then, letting $m\ra \iy$, we obtain 
	\begin{equation}\label{12.12.5}
		\begin{split}
			\int_0^T\lg \pt_{tt}\wh y,v\rg_{H^{-1}(\Om;w),H_0^1(\Om;w)}\df t+\int_0^T (\wh y, v)_{H_0^1(\Om;w)}\df t=\int_0^T(f,v)_{L^2(\Om)}\df t. 
		\end{split}
	\end{equation}
	Note that 
	\begin{equation*}
		W=\left\{\sum_{n=1}^k\psi_n(t)\Phi_n\colon \psi_n(t) \mbox{ is a smooth function on } [0,T], n=1,\cdots, k, k\in\N\right\}
	\end{equation*}
	is dense in $L^2(0,T; H_0^1(\Om;w))$, Hence \eqref{12.12.5} hold for all $v\in L^2(0,T; H_0^1(\Om;w))$. Therefore, we have 
	\begin{equation*}
		\begin{split}
			\lg \pt_{tt}\wh y, v\rg_{H^{-1}(\Om;w),H_0^1(\Om;w)}+(\wh y,v)_{H_0^1(\Om;w)}=(f,v)_{L^2(\Om)}
		\end{split}
	\end{equation*}
	for all $v\in H_0^1(\Om;w)$ and a.e. $t\in [0,T]$. Moreover, 
	\begin{equation}\label{12.12.6}
		\wh y\in C([0,T]; L^2(\Om)), \mbox{ and } \pt_t\wh y \in C([0,T]; H^{-1}(\Om;w)). 
	\end{equation}
	
	{\it Step 6}. We prove $y$ satisfies Definition \ref{12.11.D1} (ii). 
	
	On the one hand, for each $\psi\in C^2([0,T]; H_0^1(\Om;w))$ with $\psi(T)=\pt_t\psi(T)=0$, from \eqref{12.12.5} and \eqref{12.12.6}, we obtain 
	\begin{equation*}
		\begin{split} 
		&\int_0^T(y, \pt_{tt}\psi)_{L^2(\Om)}\df t+\int_0^T (y,\psi)_{H_0^1(\Om;w)}\df t\\
		&=\int_0^T (f,\psi)_{L^2(\Om)}\df t-(y(0),\pt_t\psi(0))_{L^2(\Om)}+\lg\pt_ty(0), \psi(0)\rg_{H^{-1}(\Om;w),H_0^1(\Om;w)}. 
		\end{split} 
	\end{equation*} 
	On the other hand, from \eqref{12.12.7}, we obtain 
	\begin{equation*}
		\begin{split}
			&\int_0^T(y^m,\pt_{tt}\psi)_{L^2(\Om)}\df t+\int_0^T (y^m,\psi)_{L^2(\Om)}\df t\\
			&=\int_0^T (f^m,\psi)_{L^2(\Om)}\df t-(y^m(0),\pt_t\psi(0))_{L^2(\Om)}+(\pt_ty^m(0),\psi(0))_{L^2(\Om)}. 
		\end{split}
	\end{equation*}
	Letting $m\ra\iy$, from \eqref{12.12.4} and \eqref{12.11.6} and \eqref{12.11.5}, we obtain 
	\begin{equation*}
		\begin{split}
			&\int_0^T(y,\pt_{tt}\psi)_{L^2(\Om)}\df t+\int_0^T (y,\psi)_{L^2(\Om)}\df t\\
			&=\int_0^T (f,\psi)_{L^2(\Om)}\df t-(y^0,\pt_t\psi(0))_{L^2(\Om)}+(y^1,\psi(0))_{L^2(\Om)}. 
		\end{split}
	\end{equation*} 
	Overall, we obtain $y(0)=y^0, \pt_ty(0)=y^1$. 
	
	{\it Step 7}. The solution $y$ of equation \eqref{03.06.m} is unique. 
	
	We only need to show the solution of \eqref{03.06.m} with respect to $(0,0,0)$ is zero function. To verify this, fix $0\leq s\leq T$ and set
	\begin{equation*}
		z(t)=
		\begin{cases}
			\int_t^s y(\tau)\df \tau, &\mbox{if } 0\leq t\leq s, \\
			0, &\mbox{if } s\leq t\leq t. 
		\end{cases}
	\end{equation*}
	Then $z(t)\in H_0^1(\Om;w)$ for each $t\in [0,T]$, and hence
	\begin{equation*}
		\int_0^s \lg \pt_{tt} y, z\rg_{H^{-1}(\Om;w), H_0^1(\Om;w)}\df t+\int_0^s(y, z)_{H_0^1(\Om;w)}\df t=0. 
	\end{equation*}
	Since $\pt_ty(0)=z(s)=0$, integrating by parts, we get
	\begin{equation*}
		-\int_0^s (\pt_ty, \pt_tz)_{L^2(\Om)}\df t+\int_0^s (y,z)_{H_0^1(\Om;w)}\df t=0. 
	\end{equation*}
	Note that $\pt_tz=-y\ (0\leq t\leq s)$, and then 
	\begin{equation*}
		\int_0^s( \pt_ty,y)_{L^2(\Om)}\df t-\int_0^T(\pt_tz, z)_{L^2(\Om)}\df t=0. 
	\end{equation*}
	i.e., 
	\begin{equation*}
		\int_0^s\f{\df}{\df t}\left(\f{1}{2}\|y\|_{L^2(\Om)}^2-\f{1}{2}(z,z)_{H_0^1(\Om;w)}\right)\df t=0. 
	\end{equation*}
	Hence 
	\begin{equation*}
		\|y(s)\|_{L^2(\Om)}^2+\|z(0)\|_{H_0^1(\Om;w)}^2=0. 
	\end{equation*}
	This shows that $y=0$. 
	
	From this step and \eqref{12.12.12} we obtain the first three terms in \eqref{12.11.1}. 
	
	{\it Step 8}. Improve regularity. 
	
	We assume $y^0\in D(\mcA), y^1\in H_0^1(\Om;w)$ and $f\in H^1(0,T; L^2(\Om))$. In this case, we have
	\begin{equation*}
		\pt_{tt}\left(\pt_ty_n^m\right)(t)+\la_n \left(\pt_ty_n^m\right)(t)=(\pt_tf)_n^m(t), \ n=1,\cdots, m
	\end{equation*}
	is meaningful. 
	
	Denote $\wt y^m=\pt_t y^m$, from  \eqref{12.11.4} and \eqref{12.11.3} and \eqref{12.12.8}, we have 
	\begin{equation*}
		(\pt_{tt}\wt y^m, \Phi_n)_{L^2(\Om)}+(\wt y^m, \Phi_n)_{H_0^1(\Om;w)}=\left((\pt_t f)^m, \Phi_n\right)_{L^2(\Om)}, \ n=1,\cdots,m. 
	\end{equation*}
	Multiplying by $\pt_t\wt y_n^m=\pt_{tt}y_n^m(t)$, adding for $n=1,\cdots, m$, we get
	\begin{equation*}
		(\pt_{tt}\wt y^m, \pt_t\wt y^m)_{L^2(\Om)}+(\wt y^m, \pt_t \wt y^m)_{H_0^1(\Om;w)}=\left((\pt_tf)^m, \pt_t\wt y^m\right)_{L^2(\Om)}. 
	\end{equation*}
	Similar to Step 2, we get 
	\begin{equation}\label{12.12.9}
		\begin{split} 
		&\|\pt_t\wt y^m\|_{L^2(\Om)}^2+\|\wt y^m\|_{H_0^1(\Om;w)}\\
		&\leq e^t\left(\|\wt y^m(0)\|_{H_0^1(\Om;w)}^2+\|\pt_t\wt y^m(0)\|_{L^2(\Om)}^2+\int_0^t\|(\pt_tf)^m(s)\|_{L^2(\Om)}^2\df s\right)
		\end{split}  
	\end{equation}
	for all $t\in [0,T]$. 
	Here $(\pt_tf)^m=\sum_{n=1}^m(\pt_tf)_n^m\Phi_n=\sum_{n=1}^m (\pt_tf,\Phi_n)_{L^2(\Om)}\Phi_n$.
	
	Note that from \eqref{12.11.3} we have 
	\begin{equation*}
		\pt_t\wt y^m(0)=(\pt_{tt}y^m)(0)=\sum_{n=1}^m (\pt_{tt}y_n^m)(0)\Phi_n=\sum_{n=1}^m f_n(0)\Phi_n-\sum_{n=1}^m\la_ny_n^m(0)\Phi_n,
	\end{equation*} 
	and  
	\begin{equation*}  \sum_{n=1}^m |f_n(0)|^2 \leq \|f(0)\|_{L^2(\Om)}^2\leq C\left(\|f\|_{L^2(Q)}^2+\|\pt_tf\|_{L^2(Q)}^2\right)
	\end{equation*}
	by \cite[Section 5.9.2, Theorem 2(iii)]{Evans} and $f\in H^1(0,T; L^2(\Om))$, then 
	\begin{equation*}
		\begin{split} 
		\|\pt_t\wt y^m(0)\|_{L^2(\Om)}^2
		&\leq  2\sum_{n=1}^m |f_n(0)|^2+2\sum_{n=1}^m \la_n^2|y_n^m(0)|^2\\
		&\leq C\left(\|f\|_{H^1(0,T; L^2(\Om))}^2+\|\mcA y^0\|_{L^2(\Om)}^2\right)
		\end{split} 
	\end{equation*}
	by \eqref{12.11.5} and Lemma \ref{06.28.L1}, where the positive constant $C$ depends only on $T$. 
	Now, from  $\pt_ty^m(0)=\sum_{n=1}^m (\pt_ty_n^m)(0) \Phi_n$ and  \eqref{12.11.6} and \eqref{12.11.5} and Lemma \ref{06.28.L1}, we obtain
	\begin{equation*}
		\begin{split}
			\|\wt y^m\|_{H_0^1(\Om;w)}^2=\sum_{n=1}^m \left|\pt_ty_n^m(0)\right|^2\la_n\leq \|y^1\|_{H_0^1(\Om;w)}^2. 
		\end{split}
	\end{equation*}
	Combining these and \eqref{12.12.9} we get 
	\begin{equation}\label{09.04.9}
		\begin{split} 
			&\int_\Om |\pt_t\wt  y^m|^2\df x+\int_\Om (\nabla \wt y^m\cdot\nabla \wt y^m)w\df x\\ 
			&\leq C\left(\|\mcA y^0\|_{L^2(\Om)}^2+\|y^1\|_{H_0^1(\Om;w)}^2+\|f\|_{H^1(0,T; L^2(\Om))}^2\right)
		\end{split}
	\end{equation}
	for a.e. $t\in [0,T]$, where the constant $C>0$ depends only on $T$. 
	
	Consider 
	\begin{equation*}
		(y^m,\Phi_n)_{H_0^1(\Om)}=(f^m-\pt_{tt}y^m, \Phi_n)_{L^2(\Om)}, \ n=1,\cdots, m.
	\end{equation*} 
	Multiplying $\la_ny_n^m(t)$ on the both sides, summing $n=1,\cdots,m$, from $\mcA y^m\in H_0^1(\Om;w)$, we have
	\begin{equation*}
		(\mcA y^m, \mcA y^m)_{L^2(\Om)}=(y^m, \mcA y^m)_{H_0^1(\Om;w)}=(f^m-\pt_{tt}y^m, \mcA y^m)_{L^2(\Om)}.
	\end{equation*} 
	This implies that 
	\begin{equation}\label{12.12.10}
		\|\mcA y^m\|_{L^2(\Om)}\leq \|f^m\|_{L^2(\Om)}+\|\pt_{tt}y^m\|_{L^2(\Om)}.
	\end{equation} 
	
	Choosing $\zeta\in C^\iy(\R^N), 0\leq \zeta\leq 1$ such that 
	\begin{equation*}
		\begin{split}
			\zeta=1 \mbox{ on }\Om-B(0,R_0), \quad \zeta=0 \mbox{ on }B\left(0,\f{1}{2}R_0\right), \quad |\nabla \zeta|\leq CR_0^{-1}, \left|D^2\zeta\right|\leq CR_0^{-2}, 
		\end{split}
	\end{equation*}
	where the positive constants $C$ are absolute. Taking $\xi=\zeta y^m$, 
	since $\xi$ satisfies  the uniformly elliptic equation 
	\begin{equation*}
		\mcA \xi=g \mbox{ in }\Om-B\left(0,\f{1}{4}R_0\right),\quad \xi=0 \mbox{ in } \pt\left(\Om-B\left(0,\f{1}{4}R_0\right)\right)
	\end{equation*}
	with 
	\begin{equation*}
		g=-\zeta\pt_{tt}y^m+\zeta f^m-2w\nabla\zeta \cdot \nabla y^m-y^m \Div(|x|^\al \nabla\zeta), 
	\end{equation*}
	by the classical elliptic regularity theory \cite[Theorem 1 (p.~327) in Chapter 6.3.1 and Theorem 4 (p.~334) in Chapter 6.3.2]{Evans}, we have 
	\begin{equation*}
		\|\xi\|_{H^2(\Om-B(0,\f{1}{2}R_0))}\leq C\left(\|g\|_{L^2(\Om)}+\|\xi\|_{L^2(\Om)}\right), 
	\end{equation*} 
	where the constant $C>0$ depends only on $\al,R_0$ and $\Om-B(0,\f{1}{2}R_0)$. 
	Apply this to \eqref{09.04.9} we get 
	\begin{equation*}
		\|y^m\|_{H^2(\Om-B(0,\f{1}{2}R_0))}+\|\mcA y^m\|_{L^2(\Om)}\leq C\left(\|f^m\|_{L^2(\Om)}+\|\pt_{tt}y^m\|_{L^2(\Om)}+\|y^m\|_{H_0^1(\Om;w)}\right). 
	\end{equation*}
	Combining this with $\wt y^m=\pt_ty^m$, \eqref{12.12.2}, and \eqref{09.04.9}, we obtain 
	\begin{equation*} 
		\begin{split}
			&\esssup_{0\leq t\leq T}\left(\|y^m(t)\|_{H^2(\Om-B(0,\f{1}{2}R_0))}+\|y^m(t)\|_{D(\mcA)}+\|\pt_ty^m(t)\|_{H_0^1(\Om;w)}+\|\pt_{tt}y^m(t)\|_{L^2(\Om)}\right)\\
			&\leq C\left(\|y^0\|_{D(\mcA)}+\|y^1\|_{H_0^1(\Om;w)}+\|f \|_{H^1(0,T; L^2(\Om))}\right).
		\end{split}
	\end{equation*}
	Letting $m\ra\iy$, we obtain
	\begin{equation}\label{09.07.16}
		\begin{split}
			&\esssup_{0\leq t\leq T}\left(\|y(t)\|_{H^2(\Om-B(0,\f{1}{2}R_0))}+\|y(t)\|_{D(\mcA)}+\|\pt_ty(t)\|_{H_0^1(\Om;w)}+\|\pt_{tt}y(t)\|_{L^2(\Om)}\right)\\
			&\leq C\left(\|y^0\|_{D(\mcA)}+\|y^1\|_{H_0^1(\Om;w)}+\|f\|_{H^1(0,T; L^2(\Om))}\right), 
		\end{split}
	\end{equation} 
	where the constant $C>0$ depends only on $\al, R_0, T$, and $\Om-B(0,\f{1}{2}R_0)$. 
	
	From \eqref{12.12.12},  Steps 5–6, \eqref{09.07.16}, and \eqref{12.12.14}, we have proved the theorem except for the fifth term on the left-hand side of \eqref{12.11.2}.
	
	{\it Step 9}. Hidden regularity on $\pt\Om-B(0,R_0)$, i.e., we establish the fourth term in \eqref{12.11.1}. 
	
	Let $\bs{n}\in [C^2(\ol\Om)]^N$ be defined in Lemma \ref{11.30.L2}. Choosing $\zeta\in C_0^\iy(\R^N), 0\leq \zeta\leq 1$ such that 
	\begin{equation*}
		\zeta=1 \mbox{ on } \Om-B(0,R_0), \quad \zeta=0 \mbox{ in } B\left(0,\f{1}{2}R_0\right),\quad |\nabla \zeta|\leq CR_0^{-1} \mbox{ on }\R^N, 
	\end{equation*}
	where the constant $C>0$ is absolute. Multiplying $\zeta\bs{n}\cdot \nabla y$, integrating on $Q$, we have 
	\begin{equation}\label{12.12.11}
		\begin{split}
			\iint_Q (\pt_{tt}y)(\zeta\bs{n}\cdot \nabla y)\df x\df t-\iint_Q [\Div(w\nabla y)](\zeta \bs{n}\cdot \nabla y)\df x\df t=\iint_Q f(\zeta\bs{n}\cdot \nabla y)\df x\df t. 
		\end{split}
	\end{equation}

	Note that the first term in \eqref{09.07.16} and the support condition $\supp\zeta\s \R^N-B(0,\f{1}{4}R_0)$ ensure that the following integration by parts is justified. On the one hand, integrating by parts and using $\pt_ty=0$ on $\pt\Om$, we have 
	\begin{equation*}
		\begin{split}
			\iint_Q (\pt_{tt}y)(\zeta\bs{n}\cdot \nabla y)\df x\df t
			&=\int_\Om (\pt_ty)(\zeta\bs{n}\cdot \nabla y)\df x\bigg|_{t=0}^{t=T}+\f{1}{2}\iint_Q (\pt_ty)^2\Div(\zeta \bs{n})\df x\df t.
		\end{split}
	\end{equation*}
	From \eqref{12.12.12} we obtain 
	\begin{equation}\label{12.12.13}
		\begin{split}
			\left|\iint_Q (\pt_{tt}y)(\zeta \bs{n}\cdot \nabla y)\df x\df t\right|
			&\leq C\left(\|y^0\|_{H_0^1(\Om;w)}^2+\|y^1\|_{L^2(\Om)}^2+\|f\|_{L^2(Q)}^2\right), 
		\end{split}
	\end{equation}
	where the constant $C>0$ depends only on $\al, R_0, T$, and $\Om$. 
	On the other hand, integrating by parts and using $\bs{n}=(n_1,\cdots,n_N)=\nu$ on $\pt\Om$, $y=0$ on $\pt\Om$, $\supp \zeta \s \R^N-B(0,\f{1}{2}R_0)$, and $y\in L^2(0,T; H^2(\Om-B(0,\f{1}{2}R_0)))$, we get 
	\begin{equation*}
		\begin{split}
			&-\iint_Q [\Div(w\nabla y)](\zeta \bs{n}\cdot \nabla y)\df x\df t\\
			&=-\iint_Q \Div\left[(\zeta\bs{n}\cdot \nabla y)w\nabla y\right]\df x\df t+\iint_Q w\nabla y\cdot \nabla (\zeta\bs{n}\cdot \nabla y)\df x\df t\\
			&=-\iint_{\pt Q}\zeta w\left(\f{\pt y}{\pt \nu}\right)^2\df S\df t+\iint_Q w\nabla y\cdot (D(\zeta\bs{n})\nabla y)\df x\df t+\f{1}{2}\iint_Q w(\zeta\bs{n})\cdot \nabla \left|\nabla y\right|^2\df x\df t\\
			&=-\f{1}{2}\iint_{\pt Q} \zeta w\left(\f{\pt y}{\pt \nu}\right)^2\df S\df t+\iint_Q w\nabla y\cdot (D(\zeta\bs{n})\nabla y)\df x\df t-\f{1}{2}\iint_Q|\nabla y|^2\Div(\zeta w\bs{n})\df x\df t, 
		\end{split}
	\end{equation*}
	Combining this with \eqref{12.12.13} and \eqref{12.12.12}, we obtain the following estimate, where the constant depends only on $\al, R_0, T$, and $\Om-B(0,\f{1}{2}R_0)$:
	\begin{equation*}
		\begin{split}
			\left|\iint_Q f(\zeta\bs{n}\cdot \nabla y)\df x\df t\right|
			&\leq \f{1}{2}\iint_Q f^2\df x\df t+\f{1}{2}\iint_Q |\zeta\bs{n}\cdot\nabla y|\df x\df t\\
			&\leq C\left(\|y^0\|_{H_0^1(\Om;w)}^2+\|y^1\|_{L^2(\Om)}^2+\|f\|_{L^2(Q)}^2\right)
		\end{split}
	\end{equation*}
	we obtain 
	\begin{equation}\label{12.12.14}
		\begin{split}
			\int_0^T\int_{\Om-B(0,R_0)} w\left(\f{\pt y}{\pt \nu}\right)^2\df S\df t\leq C\left(\|y^0\|_{H_0^1(\Om;w)}^2+\|y^1\|_{L^2(\Om)}^2+\|f\|_{L^2(Q)}^2\right), 
		\end{split}
	\end{equation}
	where the constant $C>0$ depends only on $\al, T, R_0$, and $\Om-B(0,\f{1}{2}R_0)$. This proves the fourth term in \eqref{12.11.1}. 
	
	{\it Step 10}.  We show the fifth term of \eqref{12.11.2}. 
	
	Indeed, let $y^0\in D(\mcA)$ and $y^1\in H_0^1(\Om;w)$ and $f\in H^1(0,T; L^2(\Om))$. Then $h=\pt_ty$ is the solution of the following system
	\begin{equation*}
		\begin{cases}
			\pt_{tt}h+\mcA h=\pt_tf, &\mbox{in }Q, \\
			h=0, &\mbox{on }\pt Q, \\
			h(0)=y^1, \pt_th(0)=f(0)-\mcA y^0, &\mbox{in } \Om, 
		\end{cases}
	\end{equation*}
	this can be carried out as in Step 8. Finally, from this and \eqref{12.11.1}, we get the fifth term in \eqref{12.11.2}.  
	This completes the proof of the theorem.
\end{proof}

\begin{remark}\label{12.11.R1}
	Even if we obtain the solution $y \in L^2(0,T; D(\mathcal{A})) \cap H^1(0,T; H_0^1(\Omega; w))$ of equation \eqref{03.06.m} under the conditions $y^0 \in D(\mathcal{A}), y^1 \in H_0^1(\Omega; w)$, and $f = 0$, we still lack any information about $\frac{\partial y}{\partial \nu}$ near $0 \in \mathbb{R}^N$.
\end{remark}

\begin{lemma}\label{11.18.L1}
	The function
	\begin{equation*}
		y\in L^2(0,T; H_0^1(\Om;w))\cap H^1(0,T; L^2(\Om))\cap H^2(0,T; H^{-1}(\Om;w))
	\end{equation*}  
	is the weak solution of \eqref{03.06.m} with respect to $(y^0,y^1,f)$ if and only if 
	\begin{equation*}
		y\in L^2(0,T; H_0^1(\Om;w))\cap H^1(0,T; L^2(\Om))
	\end{equation*}
	satisfies
	\begin{equation}\label{11.18.3}
		\begin{split} 
			&\iint_Q y  \pt_{tt}\psi \df x\df t+\iint_Q w\nabla y\cdot \nabla \psi\df x\df t\\
			&=\iint_Q f\psi\df x\df t-\int_\Om y^0\pt_t\psi(0)\df x+\int_\Om y^1\psi(0)\df x
		\end{split}
	\end{equation}
	for all $\psi\in C^\iy(\ol Q)$ with $\supp\psi(t)\s \Om$ for all $t\in [0,T]$ and $\psi(T)=\pt_t\psi(T)=0$. 
\end{lemma}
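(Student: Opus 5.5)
We prove the two implications separately, working with the class of test functions $\psi\in C^\iy(\ol Q)$ with $\supp\psi(t)\s\Om$ and $\psi(T)=\pt_t\psi(T)=0$. Such $\psi$ belong to $C^2([0,T];H_0^1(\Om;w))$, because $C_0^\iy(\Om)\s H_0^1(\Om;w)$. We use the notation $\lg\cdot,\cdot\rg$ for the $H^{-1}(\Om;w)$--$H_0^1(\Om;w)$ duality.

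\emph{Necessity.} Let $y$ be a weak solution in the sense of Definition \ref{12.11.D1}.
\begin{enumerate}
\item Take $v=\psi(t)$ in (i) and integrate over $(0,T)$.
\item Since $y\in H^1(0,T;L^2(\Om))$ and $\pt_ty\in H^1(0,T;H^{-1}(\Om;w))$, both $y$ and $\pt_ty$ are absolutely continuous with values in $H^{-1}(\Om;w)$, as in \eqref{12.12.6}. We therefore integrate by parts twice in time. For example,
\begin{equation*}
\int_0^T\lg\pt_{tt}y,\psi\rg\df t=-\lg\pt_ty(0),\psi(0)\rg-\int_0^T(\pt_ty,\pt_t\psi)_{L^2(\Om)}\df t,
\end{equation*}
and a second integration by parts moves the remaining time derivative onto $\psi$. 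The terms at $t=T$ vanish because $\psi(T)=\pt_t\psi(T)=0$.
\item Use (ii) to replace $y(0)$ by $y^0$ and $\pt_ty(0)$ by $y^1$. The duality pairing $\lg y^1,\psi(0)\rg$ then reduces to an $L^2$ pairing, since $y^1\in L^2(\Om)$ and $L^2$ is the pivot space. This yields \eqref{11.18.3}.
\end{enumerate}

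\emph{Sufficiency.} Assume $y\in L^2(0,T;H_0^1(\Om;w))\cap H^1(0,T;L^2(\Om))$ satisfies \eqref{11.18.3}.
\begin{enumerate}
\item Choose $\psi(x,t)=\theta(t)v(x)$ with $\theta\in C_0^\iy(0,T)$ and $v\in C_0^\iy(\Om)$. Then \eqref{11.18.3} says that, in the sense of distributions on $(0,T)$,
\begin{equation*}
\f{\df^2}{\df t^2}(y,v)_{L^2(\Om)}=(f,v)_{L^2(\Om)}-\int_\Om w\nabla y\cdot\nabla v\df x.
\end{equation*}
\item Define $g(t)\in H^{-1}(\Om;w)$ by
\begin{equation*}
\lg g(t),v\rg=(f(t),v)_{L^2(\Om)}-(y(t),v)_{H_0^1(\Om;w)}.
\end{equation*}
By the Poincar\'e inequality \eqref{08.16.8}, $\|g(t)\|_{H^{-1}(\Om;w)}\le C(\|f(t)\|_{L^2(\Om)}+\|y(t)\|_{H_0^1(\Om;w)})$. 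Hence $g\in L^2(0,T;H^{-1}(\Om;w))$.
\item By density of $C_0^\iy(\Om)$ in $H_0^1(\Om;w)$, the identity in step 1 gives $\pt_{tt}y=g$ as $H^{-1}(\Om;w)$-valued distributions. Thus $y\in H^2(0,T;H^{-1}(\Om;w))$ and condition (i) holds for a.e. $t$.
\item For the initial data, take general $\psi=\theta(t)v(x)$ with $\theta\in C^\iy[0,T]$ and $\theta(T)=\theta'(T)=0$. Run the integration by parts of the necessity part, now using the boundary values $y(0)$ and $\pt_ty(0)$, and compare with \eqref{11.18.3}. This gives
\begin{equation*}
-(y(0)-y^0,v)_{L^2(\Om)}\theta'(0)+\lg\pt_ty(0)-y^1,v\rg\theta(0)=0.
\end{equation*}
\item Choosing $\theta$ with $\theta(0)=0,\ \theta'(0)=1$ and then with $\theta(0)=1,\ \theta'(0)=0$, and letting $v$ range over the dense set $C_0^\iy(\Om)$, we obtain $y(0)=y^0$ and $\pt_ty(0)=y^1$. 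This is condition (ii).
\end{enumerate}

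\emph{Main obstacle.} The delicate step is step 3 of the sufficiency part: identifying the distributional second time derivative as an element of $L^2(0,T;H^{-1}(\Om;w))$. This requires that $H^{-1}(\Om;w)$ be the dual of $H_0^1(\Om;w)$ with pivot space $L^2(\Om)$, which is how it was defined. It also requires a density argument for $\psi$ of product form, so that the tensor-product test functions suffice to determine the vector-valued distribution.
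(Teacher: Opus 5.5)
Your proof is correct. The necessity direction matches the paper. So does the first half of sufficiency: showing $\pt_{tt}y=f-\mcA y\in L^2(0,T;H^{-1}(\Om;w))$ by testing with $\theta(t)v(x)$ and using density of $C_0^\iy(\Om)$. Where you differ is in how (i) and (ii) are then recovered.

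For (i), the paper localizes in time with cutoffs approximating $\chi_{(t-\de,t+\de)}$ and applies the Lebesgue differentiation theorem for each fixed $v$. You read (i) off directly from the equality $\pt_{tt}y=g$ in $L^2(0,T;H^{-1}(\Om;w))$. This gives one null set of times valid for all $v$ at once, whereas the Lebesgue-point argument produces a $v$-dependent null set and tacitly needs separability of $H_0^1(\Om;w)$.

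For (ii), the paper expands $y$ in the eigenfunctions $\Phi_n$ and observes that each coefficient $y_n$ weakly solves $y_n''+\la_ny_n=f_n$ with data $(y_n^0,y_n^1)$. It then appeals to the Galerkin construction and uniqueness in Theorem \ref{12.11.T1} to identify $y$ with the weak solution. You instead integrate by parts once more, using the continuity $y\in C([0,T];L^2(\Om))$ and $\pt_ty\in C([0,T];H^{-1}(\Om;w))$ you have just obtained. You then separate $y(0)-y^0$ from $\pt_ty(0)-y^1$ by prescribing $(\theta(0),\theta'(0))$. This is essentially Step 6 of the proof of Theorem \ref{12.11.T1}, applied to an arbitrary $y$.

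Your route is self-contained: it uses neither the spectral decomposition nor the well-posedness theorem. Your boundary terms also agree exactly with \eqref{11.18.3}, whereas the paper's \eqref{11.18.5} has the terms in $\zeta(0)$ and $\pt_t\zeta(0)$ misprinted. The paper's route ties $y$ directly to its Galerkin truncations, which the lemma itself does not require.

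Finally, the tensor-product density you flag as the main obstacle is not an extra difficulty. The $H^{-1}(\Om;w)$-valued time derivative is by definition tested against scalar $\theta\in C_0^\iy(0,T)$. An element of $H^{-1}(\Om;w)$ is determined by its action on the dense set $C_0^\iy(\Om)$, and that is exactly what your step 3 already uses.
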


\begin{proof}
	We prove the necessity. 
	
	Indeed, let $\psi\in C^\iy(\ol Q)$ with $\supp \psi(t)\s \Om$ for all $t\in [0,T]$ and $\psi(T)=\pt_t\psi(T)=0$. Since $y$ is a weak solution of \eqref{03.06.m} with respect to $(y^0, y^1, f)$, from Definition \ref{12.11.D1} (i),  we get
	\begin{equation}\label{03.04.2}
		y\in C([0,T]; L^2(\Om)), \mbox{ and } \pt_ty \in C([0,T]; H^{-1}(\Om;w)), 
	\end{equation}
	and 
	\begin{equation}\label{03.04.3}
		\lg \pt_{tt}y, \psi(t)\rg_{H^{-1}(\Om;w), H_0^1(\Om;w)}+(y, \psi(t))_{H_0^1(\Om;w)}=(f,\psi(t))_{L^2(\Om)}. 
	\end{equation}
	Integrating \eqref{03.04.3} on  $[0,T]$, we get
	\begin{equation*}
		\begin{split}
			\int_0^T\lg \pt_{tt}y, \psi(t)\rg_{H^{-1}(\Om;w), H_0^1(\Om;w)}\df t+\int_0^T(y,\psi(t))_{H_0^1(\Om;w)}\df t=\int_0^T(f,\psi(t))_{L^2(\Om)}\df t. 
		\end{split}
	\end{equation*}
	Integrating by parts with respect to $t\in [0,T]$, from \eqref{03.04.2}, we obtain 
	\begin{equation*}
		\begin{split}
			&\int_0^T(y,\pt_{tt}\psi)_{L^2(\Om)}\df t+\int_0^T(y,\psi)_{H_0^1(\Om;w)}\df t\\
			&=\int_0^T(f,\psi)_{L^2(\Om)}\df t-(y(0), \pt_t\psi(0))_{L^2(\Om)}+\lg \pt_t y(0), \psi(0)\rg_{H^{-1}(\Om;w), H_0^1(\Om;w)}\\
			&=\int_0^T(f,\psi)_{L^2(\Om)}\df t-(y^0, \pt_t\psi(0))_{L^2(\Om)}+(y^1,\psi(0))_{L^2(\Om)}. 
		\end{split}
	\end{equation*}
	This proves \eqref{11.18.3}. 
	
	Now, we prove the sufficiency.

		Indeed, it is clear that 
	\begin{equation*}
		\begin{split}
			\|\mcA y\|_{L^2(0,T; H^{-1}(\Om;w))}
			&=\sup_{ \|\psi\|_{L^2(0,T; H_0^1(\Om;w))}\leq 1}\int_0^T\lg \mcA y, \psi\rg_{H^{-1}(\Om;w), H_0^1(\Om;w)}\df t\\
			&=\sup_{  \|\psi\|_{L^2(0,T; H_0^1(\Om;w))}\leq 1}\iint_Q w\nabla y\cdot \nabla\psi\df x\df t\leq \|y\|_{L^2(0,T; H_0^1(\Om;w))}.
		\end{split}
	\end{equation*}
	i.e., $\mcA y\in L^2(0,T; H^{-1}(\Om;w))$. Now, taking $\xi\in C_0^\iy(Q)$, from  $f\in L^2(Q)\s L^2(0,T; H^{-1}(\Om;w))$ and  \eqref{11.18.3} we get 
	\begin{equation*}
		\begin{split}
			\iint_Q y\pt_{tt}\xi\df x\df t
			&=\iint_Q f\xi\df x\df t-\iint_Q w\nabla y\cdot \nabla \xi\df x\df t\\
			&=\int_0^T\lg f, \xi\rg_{H^{-1}(\Om;w),H_0^1(\Om;w)}\df x\df t-\int_0^T \lg \mcA y, \xi\rg_{H^{-1}(\Om;w), H_0^1(\Om;w)}\df t, 
		\end{split}
	\end{equation*}
	and hence 
	\begin{equation*}
		\pt_{tt}y=f-\mcA y
	\end{equation*}
	in the sense of distribution. Therefore, $\pt_{tt}y\in L^2(0,T; H^{-1}(\Om;w))$, i.e., 
	\begin{equation*}
		y\in H^2(0,T; H^{-1}(\Om;w)). 
	\end{equation*} 
	
	Fix $t\in (0,T)$ and $v\in C_0^\iy(\Om)$. Taking  $\de\in (0, \f{1}{4}\min\{t,T-t\})$. For arbitrary $\e\in (0,\f{1}{2}\de)$,  choose  $\zeta\in C_0^\iy(\R), 0\leq \zeta\leq 1$ such that $\zeta=1$ on $(t-\de,t+\de)$, and $\supp \zeta\s (t-\de-\e,t+\de+\e)$ and $\|\zeta-\chi_{(t-\de,t+\de)}\|_{L^2(\R)}<\e$, where $\chi_{(t-\de,t+\de)}$ is the characteristic function of the set $(t-\de,t+\de)$. Taking $\psi=\zeta v$, then $\psi\in C_0^\iy(Q)$, and from \eqref{11.18.3} we get (in distribution sense)
	\begin{equation*}
		\begin{split}
			\int_0^T \zeta \lg \pt_{tt}y, v\rg_{H^{-1}(\Om;w), H_0^1(\Om;w)}\df t+\iint_Q \zeta w\nabla y \cdot\nabla v\df x\df t=\iint_Q \zeta f v\df x\df t. 
		\end{split}
	\end{equation*}
	Hence, letting $\e\ra 0$, we get 
	\begin{equation*}
		\int_{t-\de}^{t+\de}\lg \pt_{tt}y, v\rg_{H^{-1}(\Om;w), H_0^1(\Om;w)}\df t+\iint_{\Om\ts (t-\de,t+\de)} w\nabla y \cdot  \nabla v\df x\df t=\iint_{\Om\ts (t-\de,t+\de)} f v\df x\df t.
	\end{equation*}
	Therefore, letting $\de\ra 0$, for a.e.\! $t\in (0,T)$ and each   $v\in C_0^\iy(\Om)$, we get 
	\begin{equation*}
		\lg \pt_{tt}y, v\rg_{H^{-1}(\Om;w), H_0^1(\Om;w)}+\int_\Om w\nabla y\cdot \nabla v\df x=\int_\Om fv\df x 
	\end{equation*} according to the Lebesgue Differentiation Theorem. This implies Definition \ref{12.11.D1} (i) since $C_0^\iy(\Om)$ is dense in $H_0^1(\Om;w)$. 
	
	Finally,  for each $v\in C_0^\iy(\Om)$ and for each $\zeta\in C^\iy(\R)$ such that $\zeta(T)=0, \pt_t\zeta(T)=0$, take $\psi(x,t)=\zeta(t)v(x)$, from \eqref{11.18.3} we have
	\begin{equation}\label{11.18.5}
		\begin{split} 
		&\int_0^T (y,v)_{L^2(\Om)} \pt_{tt}\zeta \df t+\int_0^T (y,v)_{H_0^1(\Om;w)} \zeta \df t \\
		&=\int_0^T (f,v)_{L^2(\Om)}\zeta \df t-(y^0,v)_{L^2(\Om)}\zeta(0)-(y^1,v)_{L^2(\Om)}\pt_t\zeta(0).
		\end{split} 
	\end{equation}
	Since $C_0^\infty(\Omega)$ is dense in $H_0^1(\Omega; w)$, identity \eqref{11.18.5} also holds for all $v \in H_0^1(\Omega; w)$. In particular, for all $n\in\N$, we obtain
	\begin{equation}\label{03.05.1}
		\begin{split}
			\int_0^T y_n(t)\pt_{tt}\zeta\df t+\la_n\int_0^Ty_n(t)\zeta \df t=\int_0^T f_n(t)\zeta\df t-y_n^0\zeta(0)-y_n^1\pt_t\zeta(0), 
		\end{split}
	\end{equation}
	where  $y_n(t)=(y(t),\Phi_n)_{L^2(\Om)}$, $f_n(t)=(f(t), \Phi_n)_{L^2(\Om)}, y_n^1=(y^1, \Phi_n)_{L^2(\Om)}$ and $y_n^0=(y^0, \Phi_n)_{L^2(\Om)}$. It is easily verified that $y_n(t)$ is a weak solution of the following one-dimensional system
	\begin{equation*} 
		\begin{cases}
			\pt_{tt}y_n(t)+\la_ny_n(t)=f_n(t), & t\in [0,T],\\
			y_n(0)=y_n^0, \pt_ty_n(0)=y_n^1, 
		\end{cases}
	\end{equation*}
	together with Steps 1--7 in the proof of Theorem \ref{12.11.T1}, we obtain that $y^m=\sum_{n=1}^m y_n(t)\Phi_n(x)$ converges weakly to the solution of \eqref{03.06.m} with respect to $(y^0,y^1,f)$ in $L^2(0,T; H_0^1(\Om;w))$ as $m\ra\iy$. Note that $y^m\ra \sum_{n=1}^\iy y_n\Phi_n=y$ strongly in $L^2(0,T; H_0^1(\Om;w))$ as $m\ra\iy$. Hence $y$ is the weak solution of \eqref{03.06.m} with respect to $(y^0,y^1,f)$. Moreover, $y(0)=y^0$ and $\pt_ty(0)=y^1$, i.e., Definition \ref{12.11.D1} (ii) holds. This completes the proof of the lemma. 
\end{proof}

\section{Shape-Design Approximation}\label{S3}

In this section, we introduce the shape-design approximation for \eqref{03.06.m}. The idea is to replace the original domain, whose boundary contains the degenerate point, by a family of regularized domains obtained by removing a small neighborhood of that point. This produces a sequence of uniformly hyperbolic problems on smooth domains, which can then be compared with the original degenerate equation. The goal of the section is to prove that the regularized solutions converge to the degenerate solution in the natural energy topology and, away from the degenerate point, at the level of boundary normal derivatives.
Let $\al\in (0,1), \e\in (0,\f{1}{8}R_0)$ and $\Om_\e$ be defined in Assumption \ref{Assumption (H)}. 

Now, we consider the following equation
\begin{equation}\label{12.12.15}
	\begin{cases}
		\pt_{tt}y_\e-\Div(|x|^\al\nabla y_\e)=f_\e, &\mbox{in } Q_\e,\\
		y_\e=0, &\mbox{on }\pt Q_\e,\\
		y_\e(0)=y_\e^0, \pt_ty_\e(0)=y_\e^1, &\mbox{in }\Om_\e,
	\end{cases}
\end{equation}
where $Q_\e=\Om_\e\ts (0,T)$, and $y_\e^0\in H_0^1(\Om_\e;w_\e)$ (see \eqref{12.14.2} below), $y_\e^1\in L^2(\Om_\e)$ and $f_\e\in L^2(Q_\e)$. Here and in what follows, we denote 
\begin{equation}\label{12.14.3}
	w_\e=w|_{\Om_\e}, \quad \mcA_\e u=\Div(w_\e\nabla u) \mbox{ is defined on  } \Om_\e. 
\end{equation}
It is clear that \eqref{12.12.15} is a uniformly hyperbolic equation. 

Define
\begin{equation*}
	H^1(\Om_\e; w_\e)=\left\{u\in L^2(\Om_\e)\colon \int_{\Om_\e} w_\e\nabla u\cdot\nabla u\df x<+\iy\right\}. 
\end{equation*}
Its inner product and norm are defined by 
\begin{equation*}
	(u,v)_{H^1(\Om_\e;w_\e)}=\int_{\Om_\e} w_\e\nabla u\cdot\nabla v\df x,\quad \|u\|_{H^1(\Om;w_\e)}=(u,u)_{H^1(\Om_\e; w_\e)}^\f{1}{2}. 
\end{equation*}
Set 
\begin{equation}\label{12.14.2}
	H_0^1(\Om_\e; w_\e)=\mbox{the closure of $C_0^\iy(\Om_\e)$ in } H^1(\Om_\e;w_\e). 
\end{equation}

Define 
\begin{equation*}
	H^2(\Om_\e; w_\e)=\left\{u\in H^1(\Om_\e;w_\e)\colon \mcA_\e u\in L^2(\Om_\e)\right\}.
\end{equation*}
Its inner product and norm are defined by 
\begin{equation*}
	\begin{split}
		(u,v)_{H^2(\Om_\e; w_\e)}=(u,v)_{H^1(\Om_\e;w_\e)}+(\mcA_\e u, \mcA_\e v)_{L^2(\Om_\e)},\quad \|u\|_{H^2(\Om_\e;w_\e)}=(u,u)_{H^2(\Om_\e; w_\e)}^\f{1}{2}.
	\end{split}
\end{equation*}
Set 
\begin{equation*}
	D(\mcA_\e)=H^2(\Om_\e; w_\e)\cap H_0^1(\Om_\e;w_\e). 
\end{equation*}

It is clear that 
\begin{equation*}
	H_0^1(\Om_\e;w_\e)=H_0^1(\Om_\e),\ H^1(\Om_\e;w_\e)=H^1(\Om_\e), \mbox{ and } H^2(\Om_\e;w_\e)=H^2(\Om_\e)
\end{equation*}
for all $\e\in (0,\f{1}{8}R_0)$. Hence $H_0^1(\Om_\e;w_\e)$, $H^1(\Om_\e)$, and $H^2(\Om_\e)$ are Hilbert spaces.

\begin{lemma}\label{12.12.L1}
	Let $N \geq 2$ and $\alpha \in (0, 1)$. Then, for all $u \in H_0^1(\Omega_\e; w_\e)$, the following inequality holds:
	\begin{equation*}
		(N - 2 + \alpha) \left\||x|^{\frac{\alpha}{2} - 1} u\right\|_{L^2(\Omega_\e)} \leq 2 \|\nabla u\|_{L^2(\Omega_\e; w_\e)}.
	\end{equation*}
	Furthermore, if $u \in H_0^1(\Omega; w_\e)$, it follows that $u \in L^2(\Omega_\e)$. 
\end{lemma}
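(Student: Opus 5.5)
The plan is to repeat the argument of Lemma \ref{08.15.L1} directly on $\Om_\e$. The difference is that no further limiting domain is needed, because $\Om_\e$ is already a $C^2$ domain avoiding the origin. By definition \eqref{12.14.2}, $C_0^\iy(\Om_\e)$ is dense in $H_0^1(\Om_\e;w_\e)$, so it suffices to prove the inequality for $z\in C_0^\iy(\Om_\e)$ and then pass to the limit.

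For such $z$, the singular weight causes no trouble. Since $B(0,\e)\cap\Om_\e=\es$, the vector field $|x|^{\al-2}z^2x$ is smooth and compactly supported in $\Om_\e$. The divergence theorem then gives
\begin{equation*}
2\int_{\Om_\e}|x|^{\al-2}z(x\cdot\nabla z)\df x=-(N+\al-2)\int_{\Om_\e}|x|^{\al-2}z^2\df x,
\end{equation*}
with no boundary term, because $z$ vanishes near $\pt\Om_\e$. If one prefers to allow traces, the boundary term $\int_{\pt\Om_\e}|x|^{\al-2}z^2(x\cdot\nu)\df S$ is handled exactly as in Lemma \ref{08.15.L1}: it is nonpositive on $\pt\Om_\e\cap B(0,R_0)$ by Assumption \ref{Assumption (H)}, and it vanishes elsewhere.

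Next we bound the left-hand side. Writing $|x|^{\al-2}|z||x||\nabla z|=(|x|^{\f{\al}{2}-1}|z|)(|x|^{\f{\al}{2}}|\nabla z|)$ and applying Cauchy--Schwarz, then dividing by $\||x|^{\f{\al}{2}-1}z\|_{L^2(\Om_\e)}$ (the case where it vanishes is trivial), yields $(N-2+\al)\||x|^{\f{\al}{2}-1}z\|_{L^2(\Om_\e)}\le 2\|\nabla z\|_{L^2(\Om_\e;w_\e)}$.

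To extend this to general $u$, take $z_k\ra u$ in $H_0^1(\Om_\e;w_\e)$. The right-hand sides converge. The estimate applied to $z_k-z_l$ shows that $|x|^{\f{\al}{2}-1}z_k$ is Cauchy in $L^2(\Om_\e)$. Since $|x|\ge\e$ on $\Om_\e$, the weight $|x|^{\f{\al}{2}-1}$ is bounded above and below there. Hence $z_k\ra u$ in $L^2(\Om_\e)$, and the limits identify.

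For the final assertion, write $|u|=|x|^{1-\f{\al}{2}}|x|^{\f{\al}{2}-1}|u|$ and use $|x|\le M$ on $\Om_\e\s\Om$. This gives $\|u\|_{L^2(\Om_\e)}\le \f{2M^{1-\f{\al}{2}}}{N-2+\al}\|\nabla u\|_{L^2(\Om_\e;w_\e)}<\iy$, a Poincar\'e inequality on $\Om_\e$ with constant uniform in $\e$. I read the statement's ``$H_0^1(\Om;w_\e)$'' as a typo for $H_0^1(\Om_\e;w_\e)$.

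The main point to watch is this uniformity in $\e$, which is what later sections will need. It holds because the constant involves only $N$, $\al$ and $M$. No step is a genuine obstacle; the only delicate bookkeeping is the density and limit identification, and there the boundedness of the weight away from $0$ on $\Om_\e$ makes everything routine.
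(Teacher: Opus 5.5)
Your proposal is correct and follows the paper's approach: the paper simply says the proof is the same as that of Lemma \ref{08.15.L1}, and you carry out that adaptation (the divergence identity for $|x|^{\al-2}z^2x$ plus Cauchy--Schwarz on $C_0^\iy(\Om_\e)$, then density), where the boundary term and the limit $\e\ra 0$ drop out because $z$ has compact support in $\Om_\e$ and $|x|\ge\e$ there. Your reading of $H_0^1(\Om;w_\e)$ as $H_0^1(\Om_\e;w_\e)$, and the resulting $\e$-uniform Poincar\'e bound with constant $\f{2M^{1-\f{\al}{2}}}{N-2+\al}$, also agree with Remark \ref{12.12.R1}.
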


\begin{proof}
	The proof is the same as that of Lemma \ref{08.15.L1}. 
\end{proof}

\begin{remark}\label{12.12.R1}
	From Lemma \ref{12.12.L1} and Assumption \ref{Assumption (H)},  we have
	\begin{equation}\label{12.12.16} 
		 \int_{\Om_\e} u^2\df x \leq \f{4M^{2-\al}}{(N-2+\al)^2}\int_{\Om_\e} w_\e\nabla u\cdot \nabla u\df x,
	\end{equation}
	for all $u\in H_0^1(\Om_\e;w_\e)$. This is the Poincar\'{e} inequality.
	In particular, the norm
	\begin{equation}\label{12.12.17}
		\|u\|_{H_0^1(\Omega_\e; w_\e)} = \left(\int_{\Omega_\e} (\nabla u \cdot \nabla u) w_\e \, \mathrm{d}x\right)^{\frac{1}{2}}
	\end{equation}
	is an equivalent norm in $H_0^1(\Omega_\e; w_\e)$. Hereafter, we use \eqref{08.19.2} to define the norm of $H_0^1(\Omega_\e; w_\e)$.
\end{remark}

\begin{lemma}\label{03.05.L2}
	The embedding $H_0^1(\Omega_\e; w_\e) \hookrightarrow L^2(\Omega_\e)$ is compact.
\end{lemma}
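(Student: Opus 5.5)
The statement to prove is Lemma \ref{03.05.L2}: the embedding $H_0^1(\Om_\e;w_\e)\hra L^2(\Om_\e)$ is compact. Since $\e$ is fixed, the quickest route is to reduce to the classical Rellich--Kondrachov theorem on the bounded $C^2$ domain $\Om_\e$. By Assumption \ref{Assumption (H)} we have $B(0,\e)\cap\Om_\e=\es$, so $|x|\geq \e$ on $\Om_\e$. Also $|x|\leq M$ on $\Om_\e\s\Om$. Hence the weight satisfies
\begin{equation*}
	\e^{\al}\leq w_\e(x)=|x|^\al\leq M^\al \quad\mbox{for all } x\in\Om_\e .
\end{equation*}
So $w_\e$ is bounded above and below by positive constants, which is exactly what underlies the identification $H_0^1(\Om_\e;w_\e)=H_0^1(\Om_\e)$ stated just before Lemma \ref{12.12.L1}.

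The plan has three steps.

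First, take a bounded sequence $\{u_n\}\s H_0^1(\Om_\e;w_\e)$, say $\int_{\Om_\e}w_\e|\nabla u_n|^2\df x\leq C_0$. Then $\e^\al\int_{\Om_\e}|\nabla u_n|^2\df x\leq C_0$. The Poincar\'e inequality \eqref{12.12.16} of Remark \ref{12.12.R1} gives $\int_{\Om_\e}u_n^2\df x\leq \f{4M^{2-\al}}{(N-2+\al)^2}C_0$. Therefore $\{u_n\}$ is bounded in $H^1(\Om_\e)$.

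Second, each $u_n$ lies in $H_0^1(\Om_\e)$, since the closures of $C_0^\iy(\Om_\e)$ in the two equivalent norms coincide. Rellich--Kondrachov on the bounded domain $\Om_\e$ (for $H_0^1$ no boundary regularity is even needed) then yields a subsequence converging strongly in $L^2(\Om_\e)$.

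Third, by the uniform equivalence of norms, the same conclusion transfers back to the weighted space. Thus bounded sets of $H_0^1(\Om_\e;w_\e)$ are relatively compact in $L^2(\Om_\e)$, which is the claimed compactness.

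There is no real obstacle here. The only point needing care is that the lower bound $\e^\al$ degenerates as $\e\ra0$, so the compactness is not uniform in $\e$. That is acceptable because the lemma is stated for fixed $\e$; any uniform-in-$\e$ compactness needed later would instead have to follow the splitting argument of Lemma \ref{08.15.L4}.
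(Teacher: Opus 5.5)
Your proposal is correct and follows the same route as the paper: the paper simply invokes the classical compact Sobolev (Rellich--Kondrachov) embedding via the identification $H_0^1(\Omega_\varepsilon;w_\varepsilon)=H_0^1(\Omega_\varepsilon)$. Your added detail, the two-sided bound $\varepsilon^{\alpha}\le w_\varepsilon\le M^{\alpha}$ on $\Omega_\varepsilon$ coming from $B(0,\varepsilon)\cap\Omega_\varepsilon=\emptyset$, is exactly what justifies that identification, and your remark that the compactness is not uniform in $\varepsilon$ is accurate.
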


\begin{proof}
	This follows from the classical Sobolev compact embedding, since $H_0^1(\Om_\e;w_\e)=H_0^1(\Om_\e)$. 
\end{proof}

\begin{notation}\label{03.05.N1}
	From Remark \ref{12.12.R1} and Lemma \ref{03.05.L2} we obtain that the degenerate partial differential operator $\mcA_\e$ has discrete point spectrum 
	\begin{equation}\label{03.05.2}
		0<\la_1^\e< \la_2^\e\leq \la_3^\e\leq  \cdots \ra +\iy, 
	\end{equation}
	i.e., $\la_n^\e\ (n\in\N)$ satisfies the following equation
	\begin{equation}\label{03.05.3}
		\begin{cases}
			\mcA_\e \Phi_n^\e=\la_n ^\e\Phi_n^\e, & \mbox{in }\Om_\e, \\
			\Phi_n^\e=0, &\mbox{on }\pt\Om_\e.
		\end{cases}
	\end{equation}
	Moreover, from \eqref{08.16.8},  we get
	\begin{equation}\label{03.05.4}
		\la_1^\e=\inf_{0\neq u\in H_0^1(\Om_\e;w_\e)}\f{\int_{\Om_\e} w_\e\nabla u\cdot \nabla u\df x}{\int_{\Om_\e} u^2\df x}\geq \f{(N-2+\al)^2}{4M^{2-\al}}.
	\end{equation}
	We denote $\Phi_n^\e(x)$ the $n$th eigenfunction of $\mcA_\e$ with respect to the eigenvalue $\la_n^\e\ (n\in\N)$, and $\{\Phi_n^\e\}_{n\in\N}$ is the orthonormal basis of $L^2(\Om_\e)$, moreover, $\{\Phi_n^\e\}_{n\in\N}$ is an orthogonal subset of $H_0^1(\Om_\e;w_\e)$. (See \cite[Theorem 7 (pp. 728) in Appendix D]{Evans}, or the proof of Lemma \ref{06.28.L1} in the following.) 
\end{notation}

\begin{lemma}\label{03.05.L1}
	Let $u=\sum_{i=1}^\iy u_i \Phi_i^\e\in H_0^1(\Om_\e;w_\e)$ with $u_i=(u,\Phi_i^\e)_{L^2(\Om_\e)}$ for all $i\in\N$. We have $\nabla u=\sum_{i=1}^\iy u_i\nabla \Phi_i^\e$ and $\|u\|_{H_0^1(\Om_\e;w_\e)}=(\sum_{i=1}^\iy u_i^2\la_i^\e)^\f{1}{2}$, and
	\begin{equation*}
		u\in H^2(\Om_\e;w_\e)\Lra \sum_{i=1}^\iy u_i^2(\la_i^\e)^2<\iy,
	\end{equation*}
	and
	\begin{equation*}
		\mcA_\e  u=\sum_{i=1}^\iy u_i\la_i^\e\Phi_i^\e \mbox{ and } \|\mcA_\e u\|_{L^2(\Om_\e)}=\left(\sum_{i=1}^\iy u_i^2(\la_i^\e)^2\right)^\f{1}{2}.
	\end{equation*}
\end{lemma}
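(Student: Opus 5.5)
The plan is to repeat the proof of Lemma \ref{06.28.L1} verbatim on $\Om_\e$, with $\Om$, $w$, $\mcA$, $\la_i$, $\Phi_i$ replaced by $\Om_\e$, $w_\e$, $\mcA_\e$, $\la_i^\e$, $\Phi_i^\e$. Notation \ref{03.05.N1} supplies the two inputs: $\{\Phi_i^\e\}$ is an orthonormal basis of $L^2(\Om_\e)$, and it is an orthogonal family in $H_0^1(\Om_\e;w_\e)$.

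\emph{Step 1: basis property.} Testing the weak form of \eqref{03.05.3} with $\Phi_l^\e$ gives $\int_{\Om_\e} w_\e\nabla\Phi_k^\e\cdot\nabla\Phi_l^\e\df x=\de_{kl}\la_k^\e$. Hence $\{(\la_k^\e)^{-1/2}\Phi_k^\e\}$ is orthonormal in $H_0^1(\Om_\e;w_\e)$, with the norm \eqref{12.12.17}. For completeness, suppose $u\in H_0^1(\Om_\e;w_\e)$ is $H_0^1$-orthogonal to every $\Phi_k^\e$. Then $0=(u,\Phi_k^\e)_{H_0^1(\Om_\e;w_\e)}=\la_k^\e(u,\Phi_k^\e)_{L^2(\Om_\e)}$. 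Since $\la_k^\e>0$ by \eqref{03.05.4}, every $L^2$ coefficient vanishes, so $u=0$.

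\emph{Step 2: coefficients.} Expand $u=\sum_i e_i(\la_i^\e)^{-1/2}\Phi_i^\e$ in $H_0^1(\Om_\e;w_\e)$. The coefficients are $e_i=(\la_i^\e)^{-1/2}(u,\Phi_i^\e)_{H_0^1}=(\la_i^\e)^{1/2}u_i$. This gives $\nabla u=\sum_i u_i\nabla\Phi_i^\e$ with convergence in $L^2(\Om_\e;w_\e)$, and by Parseval $\|u\|^2_{H_0^1(\Om_\e;w_\e)}=\sum_i\la_i^\e u_i^2$.

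\emph{Step 3: characterization of $H^2(\Om_\e;w_\e)$.}
\begin{itemize}
\item[($\Rightarrow$)] Let $u\in H^2(\Om_\e;w_\e)$ and set $\vp_n=\sum_{i\le n}u_i\Phi_i^\e$. Integrating by parts twice gives $(\mcA_\e u,\Phi_i^\e)_{L^2}=(u,\mcA_\e\Phi_i^\e)_{L^2}=\la_i^\e u_i$. Hence $(\mcA_\e u,\mcA_\e\vp_n)=\sum_{i\le n}(\la_i^\e)^2u_i^2=\|\mcA_\e\vp_n\|^2$. Cauchy--Schwarz then yields $\sum_{i\le n}(\la_i^\e)^2u_i^2\le\|\mcA_\e u\|^2_{L^2(\Om_\e)}$ for every $n$.
\item[($\Leftarrow$)] Assume $\sum_i(\la_i^\e)^2u_i^2<\iy$. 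For $\vp\in C_0^\iy(\Om_\e)$, Step 2 gives $\lg\mcA_\e u,\vp\rg=\int w_\e\nabla u\cdot\nabla\vp=\sum_i u_i\la_i^\e(\Phi_i^\e,\vp)_{L^2}$. The series $\sum_i u_i\la_i^\e\Phi_i^\e$ converges in $L^2(\Om_\e)$, so $\mcA_\e u$ equals it in distributions, and Parseval gives the norm identity.
\end{itemize}

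The main obstacle is justifying $(\mcA_\e u,\Phi_i^\e)=(u,\mcA_\e\Phi_i^\e)$ in the ($\Rightarrow$) direction. On $\Om_\e$ this is easier than in the degenerate case. The operator is uniformly elliptic, since $w_\e\ge\e^\al$, and $\pt\Om_\e\in C^2$. Classical elliptic regularity therefore gives $\Phi_i^\e\in H^2(\Om_\e)\cap H_0^1(\Om_\e)$. For $u\in D(\mcA_\e)$ we have $u=0$ on $\pt\Om_\e$, so Green's formula applies with both boundary terms vanishing. Alternatively, one can use the weak formulation: because $\Phi_i^\e\in H_0^1(\Om_\e)$, we have $(\mcA_\e u,\Phi_i^\e)=\int w_\e\nabla u\cdot\nabla\Phi_i^\e=\la_i^\e u_i$ directly, and this route avoids the second integration by parts entirely.
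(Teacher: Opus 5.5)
Your proposal is correct and follows the paper's route: the paper gives no separate proof of this lemma and instead refers to the proof of Lemma \ref{06.28.L1} carried over to $\Om_\e$. That proof has the same three stages as yours: the basis property of $\{(\la_k^\e)^{-1/2}\Phi_k^\e\}$, identification of the coefficients, and the two directions of the $H^2(\Om_\e;w_\e)$ characterization. Your weak-formulation justification of $(\mcA_\e u,\Phi_i^\e)_{L^2(\Om_\e)}=\la_i^\e u_i$ usefully fills in the step the paper takes for granted when it writes $\int(\mcA u)\Phi_i=\int u\,\mcA\Phi_i$.
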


\begin{definition}\label{12.12.D1}
	We say that 
	\begin{equation*}
		y_\e\in L^2(0,T; H_0^1(\Om_\e;w_\e))\cap H^1(0,T; L^2(\Om_\e))\cap H^2(0,T; H^{-1}(\Om_\e;w_\e))
	\end{equation*}
	is a weak solution of \eqref{12.12.15} with respect to $(y_\e^0,y_\e^1,f_\e)$ if
	
	(i) for every $v\in H_0^1(\Om_\e;w_\e)$ and a.e.~$t\in (0,T)$ we have 
	\begin{equation*}
		\lg\pt_{tt}y_\e, v\rg_{H^{-1}(\Om_\e;w_\e),H_0^1(\Om_\e;w_\e)}+\int_{\Om_\e} w_\e\nabla y_\e\cdot \nabla v\df x=\int_{\Om_\e} f_\e v\df x, 
	\end{equation*}
	
	(ii) $y_\e(0)=y_\e^0$ and $\pt_ty_\e(0)=y_\e^1$. 
\end{definition}

Since \eqref{12.12.15} is a uniformly hyperbolic equation, it admits a unique weak solution. However, the a priori estimates depend on the coefficients involving $w_\e$ and $\Om_\e$, and thus on the parameter $\e \in (0, \tfrac{1}{8}R_0)$. In order to obtain estimates that are uniform in $\e$, we reconstruct the argument in a way similar to Theorem \ref{12.11.T1}, which yields the following theorem.

\begin{theorem}\label{12.12.T1}
	There exists a unique weak solution $y_\e$ to the equation \eqref{12.12.15}, moreover, we have 
	\begin{equation}\label{12.12.18}
		\begin{split}
			&\esssup_{t\in (0,T)}\left(\|y_\e(t)\|_{H_0^1(\Om_\e;w_\e)}+\|\pt_ty_\e(t)\|_{L^2(\Om_\e)}\right)+\|\pt_{tt}y_\e\|_{L^2(0,T; H^{-1}(\Om_\e;w_\e))}\\
			&\hspace{4.5mm}+\left\|\f{\pt y_\e}{\pt \nu}\right\|_{L^2(0,T; L^2(\pt\Om_\e-B(0,R_0)))}\\
			&\leq C\left(\|y_\e^0\|_{H_0^1(\Om_\e;w_\e)}+\|y_\e^1\|_{L^2(\Om_\e)}+\|f_\e\|_{L^2(Q_\e)}\right), 
		\end{split}
	\end{equation}
	where the constant $C>0$ depends only on $\al, R_0, T$ and $\Om-B(0,\f{1}{2}R_0)$.

	Furthermore, if in addition $y_\e^0\in D(\mcA_\e)$ and $y_\e^1\in H_0^1(\Om_\e;w_\e)$ and $f_\e\in H^1(0,T; L^2(\Om_\e))$, then 
	\begin{equation*}
		y_\e\in L^2(0,T; D(\mcA_\e))\cap H^1(0,T; H_0^1(\Om_\e;w_\e))\cap H^2(0,T; L^2(\Om_\e)), 
	\end{equation*}
	moreover, we have 
	\begin{equation}\label{12.12.19}
		\begin{split}
			&\esssup_{t\in (0,T)}\left(\|y_\e(t)\|_{H^2(\Om-B(0,R_0))}+\|y_\e(t)\|_{D(\mcA_\e)}+\|\pt_ty_\e(t)\|_{H_0^1(\Om_\e;w_\e)}+\|\pt_{tt}y_\e(t)\|_{L^2(\Om_\e)}\right)\\
			&\hspace{4.5mm}+\left\|\f{\pt y_\e}{\pt \nu}\right\|_{H^1(0,T; L^2(\pt\Om_\e-B(0,R_0)))}\\
			&\leq C\left(\|y_\e^0\|_{D(\mcA_\e)}+\|y_\e^1\|_{H_0^1(\Om_\e;w_\e)}+\|f\|_{H^1(0,T; L^2(\Om_\e))}\right). 
		\end{split}
	\end{equation}
	Here the constants $C>0$ depend only on $\al, R_0, T$ and $\Om-B(0,\f{1}{2}R_0)$. 
\end{theorem}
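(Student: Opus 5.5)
The plan is to repeat the proof of Theorem \ref{12.11.T1} on $\Om_\e$ and check at each step that the constants do not depend on $\e$.

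Existence and uniqueness are classical, since \eqref{12.12.15} is uniformly hyperbolic on the $C^2$ domain $\Om_\e$ (there $w_\e\geq \e^\al>0$). The point is to redo the Galerkin construction with the eigenbasis $\{\Phi_n^\e\}$ of Notation \ref{03.05.N1}, using Lemma \ref{03.05.L1} in place of Lemma \ref{06.28.L1}. The energy identity of Steps 2--3 gives
\begin{equation*}
\|\pt_ty_\e^m(t)\|_{L^2(\Om_\e)}^2+\|y_\e^m(t)\|_{H_0^1(\Om_\e;w_\e)}^2\leq e^T\left(\|y_\e^0\|_{H_0^1(\Om_\e;w_\e)}^2+\|y_\e^1\|_{L^2(\Om_\e)}^2+\|f_\e\|_{L^2(Q_\e)}^2\right).
\end{equation*}
The $H^{-1}$ bound on $\pt_{tt}y_\e^m$ uses only the Poincar\'e inequality \eqref{12.12.16}, whose constant $4M^{2-\al}/(N-2+\al)^2$ is independent of $\e$; this is where Assumption \ref{Assumption (H)} enters, through the Hardy inequality of Lemma \ref{12.12.L1}. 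Passing to the limit and proving uniqueness then go exactly as in Steps 4--7. For the higher regularity I will differentiate in time as in Step 8 and use $\|\mcA_\e y\|\leq\|f_\e\|+\|\pt_{tt}y\|$. All of these constants depend only on $T$, $\al$ and $M$.

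The local $H^2$ bound and the normal derivative need cutoffs. Take $\zeta$ with $\zeta=1$ off $B(0,R_0)$ and $\zeta=0$ on $B(0,\f12R_0)$, and note that $\Om_\e-B(0,\f14R_0)=\Om-B(0,\f14R_0)$ because $\Om-\Om_\e\s B(0,2\e)$ and $2\e<\f14R_0$. Hence the cutoff problem for $\zeta y_\e$ is a uniformly elliptic problem, with coefficient $|x|^\al\geq (R_0/4)^\al$, on an $\e$-independent domain. Elliptic regularity then gives $H^2(\Om-B(0,\f12R_0))$ bounds with constants depending only on $\al$, $R_0$ and $\Om-B(0,\f12R_0)$. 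For the hidden regularity I use the multiplier $\zeta\bs n\cdot\nabla y_\e$, with $\bs n$ from Lemma \ref{11.30.L2} built for $\Om$. Since $\zeta$ vanishes where $\Om_\e$ differs from $\Om$, the field $\zeta\bs n$ equals $\zeta\nu$ on $\pt\Om_\e$, and every term in the integration-by-parts identity of Step 9 is bounded by $\e$-independent quantities. This yields the $L^2$ bound on $\pt y_\e/\pt\nu$ over $\pt\Om_\e-B(0,R_0)=\pt\Om-B(0,R_0)$. Applying the same argument to $h_\e=\pt_ty_\e$ gives the $H^1$-in-time bound, as in Step 10.

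The main obstacle is checking that nothing hides a dependence on $\e$: the Poincar\'e constant, the initial-data estimate $\|\pt_{tt}y_\e^m(0)\|\leq C(\|f_\e(0)\|+\|\mcA_\e y_\e^0\|)$, and above all the elliptic and multiplier constants. The key observation for the last two is that every cutoff is supported in the region where $\Om_\e$ coincides with $\Om$.
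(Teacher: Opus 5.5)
Your proposal is correct and follows the same route as the paper: rerun the Galerkin, energy, cutoff-elliptic and multiplier steps of Theorem \ref{12.11.T1} on $\Om_\e$, using the eigenbasis $\{\Phi_n^\e\}$, Lemma \ref{03.05.L1}, and the $\e$-independent Poincar\'e constant of Remark \ref{12.12.R1}. Your explicit remark that every cutoff is supported where $\Om_\e$ coincides with $\Om$ (because $\Om-\Om_\e\s B(0,2\e)\s B(0,\f14R_0)$) is exactly what the paper's phrase ``the remaining arguments are independent of $\e$'' tacitly relies on.
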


\begin{proof}
	The proof is similar to that of Theorem \ref{12.11.T1}.
	
	Note that the a priori estimates \eqref{12.12.18} and \eqref{12.12.19} are analogous to the estimates \eqref{12.11.1} and \eqref{12.11.2}, respectively. More precisely, the estimates \eqref{12.11.1} and \eqref{12.11.2} are obtained through \eqref{12.12.2}, \eqref{12.12.3}, \eqref{12.12.12}, \eqref{09.07.16}, and \eqref{12.12.14}. In deriving these estimates, we only use Gronwall's inequality, Remark \ref{08.16.R1}, Lemma \ref{06.28.L1}, Lemma \ref{11.30.L2}, \cite[Theorem 2 (p.~302) in Chapter 5.9.2]{Evans}, and \cite[Theorem 1 (p.~327) in Chapter 6.3.1 and Theorem 4 (p.~334) in Chapter 6.3.2]{Evans}.
	
	By replacing Remark \ref{08.16.R1} with Remark \ref{12.12.R1}, Lemma \ref{08.15.L4} with Lemma \ref{03.05.L2}, and Lemma \ref{06.28.L1} with Lemma \ref{03.05.L1}, while noting that the remaining arguments are independent of $\e \in (0,\frac{1}{8}R_0)$, we obtain the estimates \eqref{12.12.18} and \eqref{12.12.19}.
	
	This completes the proof of the theorem.
\end{proof}

\begin{lemma}\label{12.13.L1}
	The function
	\begin{equation*}
		y_\e\in L^2(0,T; H_0^1(\Om_\e;w_\e))\cap H^1(0,T; L^2(\Om_\e))\cap H^2(0,T; H^{-1}(\Om_\e; w_\e))
	\end{equation*}  
	is the weak solution of \eqref{12.12.15} with respect to $(y_\e^0,y_\e^1, f_\e)$ if and only if 
	\begin{equation*}
		y_\e\in L^2(0,T; H_0^1(\Om_\e;w_\e))\cap H^1(0,T; L^2(\Om_\e))
	\end{equation*}
	satisfies
	\begin{equation}\label{11.21.1}
		\begin{split} 
			&\iint_{Q_\e} y_\e(t)  \pt_{tt}\psi(t) \df x\df t+\iint_{Q_\e} w_\e\nabla y_\e\cdot \nabla \psi\df x\df t\\
			&=\iint_{Q_\e} f_\e\psi\df x\df t+\int_{\Om_\e} y_\e^1\psi(0)\df x-\int_{\Om_\e} y_\e^0\pt_t\psi(0)\df x
		\end{split}
	\end{equation}
	for all $\psi\in C^\iy(\ol Q_\e)$ with $\supp\psi(t)\s \Om_\e$ for all $t\in [0,T]$ and $\psi(T)=\pt_t\psi(T)=0$. 
\end{lemma}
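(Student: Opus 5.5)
The plan is to transcribe the proof of Lemma \ref{11.18.L1} to the domain $\Om_\e$, with the spectral data $(\la_n,\Phi_n)$ replaced by $(\la_n^\e,\Phi_n^\e)$ from Notation \ref{03.05.N1} and Lemma \ref{03.05.L1}, and Theorem \ref{12.11.T1} replaced by Theorem \ref{12.12.T1}. Since $H_0^1(\Om_\e;w_\e)=H_0^1(\Om_\e)$ and $w_\e$ is bounded above and below on $\Om_\e$, every functional-analytic fact used there (density of $C_0^\iy(\Om_\e)$, Poincar\'e inequality \eqref{12.12.16}, the dual pairing with pivot $L^2(\Om_\e)$) is available.

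\emph{Necessity.} Let $y_\e$ be a weak solution in the sense of Definition \ref{12.12.D1}. From $y_\e\in H^1(0,T;L^2(\Om_\e))\cap H^2(0,T;H^{-1}(\Om_\e;w_\e))$ we get $y_\e\in C([0,T];L^2(\Om_\e))$ and $\pt_ty_\e\in C([0,T];H^{-1}(\Om_\e;w_\e))$. Take an admissible test function $\psi$. Then $\psi(t)\in H_0^1(\Om_\e;w_\e)$, so it can be inserted in (i). Integrating over $(0,T)$ and integrating by parts twice in time, using $\psi(T)=\pt_t\psi(T)=0$ and (ii), produces exactly \eqref{11.21.1}, the boundary terms being $(y_\e^1,\psi(0))_{L^2(\Om_\e)}-(y_\e^0,\pt_t\psi(0))_{L^2(\Om_\e)}$.

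\emph{Sufficiency.} This has three steps.
\begin{enumerate}
\item Show $\mcA_\e y_\e\in L^2(0,T;H^{-1}(\Om_\e;w_\e))$, with norm bounded by $\|y_\e\|_{L^2(0,T;H_0^1(\Om_\e;w_\e))}$, by duality.
\item Test \eqref{11.21.1} with $\xi\in C_0^\iy(Q_\e)$. This gives $\pt_{tt}y_\e=f_\e-\mcA_\e y_\e$ in distributions, hence $y_\e\in H^2(0,T;H^{-1}(\Om_\e;w_\e))$.
\item Take $\psi=\zeta(t)v(x)$ with $v\in C_0^\iy(\Om_\e)$ and $\zeta$ a smooth approximation of $\chi_{(t-\de,t+\de)}$. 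Applying the Lebesgue differentiation theorem, as in Lemma \ref{11.18.L1}, yields (i) for a.e.\ $t$ and all $v\in C_0^\iy(\Om_\e)$. Density then extends this to all of $H_0^1(\Om_\e;w_\e)$.
\end{enumerate}

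\emph{Initial data.} This is the main obstacle: the data enter \eqref{11.21.1} only through the test functions, and there are two possible routes.

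The direct route uses $\psi=\zeta(t)v$ with $\zeta(T)=\pt_t\zeta(T)=0$ and $\zeta(0),\pt_t\zeta(0)$ arbitrary. Integrating the now-established (i) by parts in time gives \eqref{11.21.1} again, but with $y_\e(0),\pt_ty_\e(0)$ in place of $y_\e^0,y_\e^1$. Subtracting the two identities and varying $\zeta(0)$ and $\pt_t\zeta(0)$ independently yields $y_\e(0)=y_\e^0$ and $\pt_ty_\e(0)=y_\e^1$, tested against every $v\in C_0^\iy(\Om_\e)$. Density then identifies them in $L^2(\Om_\e)$ and in $H^{-1}(\Om_\e;w_\e)$ respectively.

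The alternative, mirroring the original proof, expands in $\{\Phi_n^\e\}$. Each coefficient $y_n^\e(t)$ is then a weak solution of the scalar ODE $\pt_{tt}y_n^\e+\la_n^\e y_n^\e=f_n^\e$ with the prescribed data. Uniqueness from Theorem \ref{12.12.T1} and convergence of the partial sums in $L^2(0,T;H_0^1(\Om_\e;w_\e))$ (Lemma \ref{03.05.L1}) then identify $y_\e$ as the weak solution.

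I would present the direct route because it avoids the spectral machinery, and use the spectral argument only as a remark confirming consistency with the proof of Lemma \ref{11.18.L1}.
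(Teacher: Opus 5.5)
Your proposal is correct. Necessity and the first three steps of sufficiency are exactly the paper's argument; the paper proves this lemma simply by saying it is the same as Lemma \ref{11.18.L1}. The only real divergence is how you recover the initial data.

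\emph{The paper's route.} It expands $y_\e$ in the eigenfunctions and notes that each coefficient is a weak solution of $\pt_{tt}y_n+\la_n^\e y_n=f_n$ with the prescribed data. It then uses the Galerkin construction and uniqueness (Steps 1--7 of Theorem \ref{12.11.T1}, transplanted to $\Om_\e$): the Galerkin limit, which is $y_\e$, is the weak solution.

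\emph{Your direct route.} You compare \eqref{11.21.1} for $\psi=\zeta(t)v$ with the identity obtained by integrating the already established relation (i) twice by parts in time. Subtracting gives $\lg \pt_ty_\e(0)-y_\e^1,v\rg\zeta(0)-(y_\e(0)-y_\e^0,v)\pt_t\zeta(0)=0$. The choices $(\zeta(0),\pt_t\zeta(0))=(1,0)$ and $(0,1)$ then yield both initial conditions.

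\emph{Comparison.} Your route needs only the continuity $y_\e\in C([0,T];L^2(\Om_\e))$ and $\pt_ty_\e\in C([0,T];H^{-1}(\Om_\e;w_\e))$, which you have after step 2, plus density of $C_0^\iy(\Om_\e)$. It uses no spectral basis, no ODE uniqueness and no existence theory. It is in fact the same comparison the paper performs in Step 6 of Theorem \ref{12.11.T1}. The spectral route adds only that $y_\e$ is the limit of the Galerkin approximations, which the lemma does not need: Definition \ref{12.12.D1} asks only for (i) and (ii), and uniqueness turns ``a'' weak solution into ``the'' weak solution. Presenting the direct route is therefore the better choice.

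One small simplification: once step 2 gives $\pt_{tt}y_\e=f_\e-\mcA_\e y_\e$ in $L^2(0,T;H^{-1}(\Om_\e;w_\e))$, relation (i) already holds for a.e.\ $t$, with a null set independent of $v$. So the Lebesgue-differentiation argument in step 3 can be dropped.
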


\begin{proof}
	The proof is the same as that of Lemma \ref{11.18.L1}. 
\end{proof}

We are now in a position to employ the shape design method for equation \eqref{03.06.m}, based on the regularized problem \eqref{12.12.15} with $\e\in (0,\frac{1}{8}R_0)$. 

\begin{notation} \label{03.05.N2}
Let $\e\in (0,\f{1}{8}R_0)$, and $y_\e$ is the weak solution of \eqref{12.12.15} with respect to $(y_\e^0, y_\e^1, f_\e)$. We extend $y_\e$ to 
\begin{equation}\label{12.12.21}
	E y_\e=
	\begin{cases}
		y_\e, &\mbox{in } Q_\e, \\
		0, &\mbox{on } Q-Q_\e. 
	\end{cases}
\end{equation}
Then we have
\begin{equation}\label{12.13.2}
	E\pt_ty_\e=\pt_t Ey_\e,
\end{equation}
indeed, for each $\zeta\in C_0^\iy(0,T)$ and $v\in C_0^\iy(\Om)$, we have
\begin{equation*}
	\begin{split}
		\iint_{Q}\left[\pt_tEy_\e\right]\zeta v\df x\df t
		&=-\iint_{Q}(Ey_\e)(\pt_t\zeta)v\df x\df t=-\iint_{Q_\e} y_\e (\pt_t\zeta)v\df x\df t\\
		&=\iint_{Q_\e} (\pt_ty_\e)\zeta v\df x\df t=\iint_{Q}[E(\pt_ty_\e)]\zeta v\df x\df t,  
	\end{split}
\end{equation*}
this implies that $\pt_tEy_\e=E\pt_ty_\e$ in the sense of distribution, moreover, we have $\pt_tEy_\e=E\pt_ty_\e$. 
And from 
\begin{equation*}
	\nabla Ey_\e=
	\begin{cases}
		\nabla y_\e, &\mbox{on }Q_\e,\\
		0, &\mbox{on }Q-Q_\e
	\end{cases}
\end{equation*}
and 
\begin{equation*}
	\iint_{Q} w\nabla Ey_\e\cdot \nabla Ey_\e\df x\df t+\iint_Q (Ey_\e)^2\df x\df t=\iint_{Q_\e} w_\e\nabla y_\e\cdot \nabla y_\e\df x\df t+\iint_{Q_\e} (y_\e)^2\df x\df t
\end{equation*}
we get 
\begin{equation}\label{12.13.1}
	Ey_\e\in L^2(0,T; H_0^1(\Om;w))\cap H^1(0,T; L^2(\Om)).
\end{equation}
Moreover, we have
\begin{equation*}
	\|Ey_\e\|_{L^2(0,T; H_0^1(\Om;w))}=\|y_\e\|_{L^2(0,T; H_0^1(\Om_\e;w_\e))}. 
\end{equation*}
\end{notation}

Theorem \ref{12.12.T2}  is one of the main results of this paper. It asserts that the solution of equation \eqref{03.06.m} can be approximated, via the shape design method, by the solutions of equation \eqref{12.12.15} for $\e \in (0,\frac{1}{8}R_0)$. Now, we prove this theorem. 

We denote  $\dist(A, B)=\inf_{x\in A, z\in B}|x-z|$ for any sets $A, B\s\R^N$.

\begin{proof}[Proof of Theorem \ref{12.12.T2}]
	From Assumption \ref{Assumption (H)} (i.e., $\pt\Om_\e-B(0,R_0)=\pt\Om-B(0,R_0)$ for all $\e\in (0,\f{1}{8}R_0)$), it suffices to consider $\e\in (0,\min\{\f{1}{8}R_0, \dist(\supp y^0, 0)\})$. Then, from \eqref{12.12.18} and Notation \ref{03.05.N2}, we get 
	\begin{equation*}
		\begin{split}
			&\|Ey_\e\|_{L^2(0,T; H_0^1(\Om_\e;w_\e))}+\|\pt_tEy_\e\|_{L^2(Q_\e)}+\left\|\f{\pt Ey_\e}{\pt\nu}\right\|_{L^2(0,T; L^2(\pt\Om-B(0,R_0)))}\\
			&\leq C\left(\|y^0\|_{H_0^1(\Om;w)}+\|y^1\|_{L^2(\Om)}+\|f\|_{L^2(Q)}\right),  
		\end{split}
	\end{equation*}
	where the positive constant $C$ depends only on $\al, R_0, T$, and $\Om-B(0,\f{1}{2}R_0)$. Therefore, there exist $\vp\in L^2(0,T; H_0^1(\Om;w))\cap H^1(0,T; L^2(\Om))$ and $z\in L^2(0,T; L^2(\pt\Om-B(0,R_0)))$ such that 
	\begin{equation}\label{12.13.4}
		\begin{split}
			Ey_\e
			&\ra \vp\ \!\quad  \mbox{ weakly in } L^2(0,T; H_0^1(\Om;w)), \\
			\pt_t Ey_\e 
			&\ra \pt_t\vp\   \mbox{ weakly in } L^2(Q),\\
			\f{\pt Ey_\e}{\pt \nu}
			&\ra z\ \quad \mbox{ weakly in } L^2(0,T; L^2(\pt\Om-B(0,R_0))).
		\end{split}
	\end{equation}
	
	Now, we prove $\vp=y$ and $z=\f{\pt Ey_\e}{\pt \nu}$. 
	
	Let $\psi\in C^\iy(\ol Q)$ with $\supp \psi(t)\s \Om$ for all $t\in [0,T]$ and $\psi(T)=\pt_t\psi(T)=0$. Then,  for all $\e\in (0, \f{1}{2}\min_{t\in [0,T]} \dist(\supp\psi(t), \pt\Om))$, we have $\supp \psi(t)\s\Om_\e$ for all $t\in [0,T]$. Hence,  from Lemma \ref{12.13.L1},  we get
	\begin{equation*}
		\begin{split}
			&\iint_{Q_\e} y_\e(t)  \pt_{tt}\psi(t) \df x\df t+\iint_{Q_\e} w_\e\nabla y_\e\cdot \nabla \psi\df x\df t\\
			&=\iint_{Q_\e} f_\e\psi\df x\df t+\int_\Om y_\e^1\psi(0)\df x-\int_{\Om_\e} y_\e^0\pt_t\psi(0)\df x. 
		\end{split}
	\end{equation*}
	Extending $y_\e$ to $Ey_\e$, note that $\supp \psi(t)\s \Om_\e$ for all $t\in [0,T]$, we obtain
	\begin{equation*}
		\begin{split}
			&\iint_{Q} Ey_\e(t)  \pt_{tt}\psi(t) \df x\df t+\iint_{Q} w\nabla Ey_\e\cdot \nabla \psi\df x\df t\\
			&=\iint_{Q} f\psi\df x\df t+\int_\Om y^1\psi(0)\df x-\int_{\Om} y^0\pt_t\psi(0)\df x.
		\end{split}
	\end{equation*}
	From \eqref{12.13.4} we obtain 
	\begin{equation*}
		\begin{split}
			&\iint_{Q} \vp(t)  \pt_{tt}\psi(t) \df x\df t+\iint_{Q} w\nabla \vp\cdot \nabla \psi\df x\df t=\iint_{Q} f\psi\df x\df t+\int_\Om y^1\psi(0)\df x-\int_{\Om} y^0\pt_t\psi(0)\df x.
		\end{split}
	\end{equation*}
	This implies that $\vp$ is a solution of \eqref{03.06.m} by Lemma \ref{11.18.L1}. Hence $\vp=y$ by Theorem \ref{12.11.T1} (uniqueness). 
	
	Next, let $\xi\in C_0^\iy([\pt\Om-B(0,R_0)]\ts (0,T))$. Extend it to a function $\wt\xi\in C^\iy(\ol Q)$ such that $\wt \xi=\xi$ on $[\pt\Om-B(0,R_0)]\ts [0,T]$, $\wt \xi=0$ on $[\pt\Om-\supp\xi]\ts [0,T]$, $\wt\xi=0$ in $[\Om\cap B(0,\f{1}{2}R_0)]\ts [0,T]$, and $\wt \xi=\pt_t\wt \xi=0$ at $t=0,T$. Multiplying \eqref{12.12.15} by $\wt\xi$, and using that \eqref{12.12.15} is uniformly hyperbolic, integration by parts gives
	\begin{equation*}
		\begin{split}
			&-\iint_{\pt Q_\e} \xi w_\e \nabla y_\e\cdot \nu\df S\df t-\iint_{Q_\e} (\pt_ty_\e)(\pt_t\wt \xi)\df x\df t+\iint_{Q_\e} w_\e\nabla y_\e\cdot \nabla\wt \xi \df x\df t=\iint_{Q_\e} f_\e \wt \xi\df x\df t
		\end{split}
	\end{equation*}
	when $\e\in (0,\f{1}{2}R_0)$. 
	Extending $y_\e$ to $Ey_\e$, from \eqref{12.13.4},  letting $\e\ra 0$, we get
	\begin{equation}\label{12.13.5}
		-\iint_{\pt Q} \xi w z\df S\df t-\iint_Q (\pt_ty)(\pt_t\wt \xi)\df x\df t+\iint_Qw\nabla y\cdot \nabla\wt \xi\df x\df t=\iint_Q f\wt \xi\df x\df t. 
	\end{equation}
	Note that the first term in \eqref{12.13.5} is indeed $\iint_{\pt Q}\xi wz\df S\df t=\iint_{[\pt\Om-B(0,\f{1}{2}R_0)]\ts (0,T)} \xi wz\df S\df t$. 
	Multiplying $\xi$ on the both sides of \eqref{03.06.m}, note that $\wt\xi =0$ on $[\Om\cap B(0,\f{1}{2}R_0)]\ts [0,T]$ and $y(t)\in H^2(\Om-B(0,R_0))$ for a.e. $t\in (0,T)$, integrating by parts, from Theorem \ref{12.11.T1} \eqref{12.11.2}, we have
	\begin{equation*}
		\begin{split}
			-\iint_{\pt Q} \xi w \f{\pt y}{\pt\nu}\df S\df t-\iint_Q (\pt_ty)(\pt_t\wt \xi)\df x\df t+\iint_Qw\nabla y\cdot \nabla\wt \xi\df x\df t=\iint_Q f\wt \xi\df x\df t. 
		\end{split}
	\end{equation*}
	Combining this with \eqref{12.13.5}, we obtain 
	\begin{equation*}
		z=\f{\pt y}{\pt \nu}. 
	\end{equation*}
	
	Finally, let $y^0,y^1\in C_0^\iy(\Om)$ and $f=0$. Then it is clear that $y^0\in D(\mcA_\e)$ and $y^1\in H_0^1(\Om_\e;w_\e)$ for all $\e\in \left(0,\min\left\{\f{1}{8}R_0, \dist(\supp y^0,0), \dist(\supp y^1,0)\right\}\right)$.
	On the one hand, from the fifth term in \eqref{12.12.19} we know that $\{\pt_t\f{\pt y_\e}{\pt \nu}=\f{\pt (\pt_ty_\e)}{\pt \nu}\}$ is a bounded set in $L^2(0,T; L^2(\pt\Om-B(0,R_0)))$. On the other hand, from the first term in \eqref{12.12.19}, the classical Sobolev trace theorem, and $\Om_\e-B(0,R_0)=\Om-B(0,R_0)$ for all $\e\in(0,\f{1}{8}R_0)$, we get 
	\begin{equation*}
		\begin{split}
			\left\|\f{\pt y_\e}{\pt\nu}\right\|_{L^2(0,T; H^\f{1}{2}(\pt\Om-B(0,R_0)))}\leq C\|y_\e\|_{L^2(0,T; H^2(\Om-B(0,R_0)))}\leq C\left(\|y^0\|_{D(\mcA)}+\|y^1\|_{H_0^1(\Om;w)}\right), 
		\end{split}
	\end{equation*}
	where the positive constant $C$ depends only on $\al, R_0, T$, and $\Om-B(0,\f{1}{2}R_0)$. 
	Combining this with the embeddings $H^\f{1}{2}(\pt\Om-B(0,R_0))\hra L^2(\pt\Om-B(0,R_0))$ and 
	\begin{equation*}
		\begin{split} 
		&\left\{\f{\pt y_\e}{\pt \nu}\in L^2(0,T; H^\f{1}{2}(\pt\Om-B(0,R_0)))\colon \pt_t\f{\pt y_\e}{\pt\nu}=\f{\pt (\pt_ty_\e)}{\pt\nu}\in L^2(0,T; L^2(\pt\Om-B(0,R_0)))\right\}\\
		&\hra L^2(0,T; L^2(\pt\Om-B(0,R_0)))
		\end{split} 
	\end{equation*}
	are compact, we obtain \eqref{12.13.3} along a subsequence. This completes the proof of this theorem.
\end{proof}

\section{Observability via Shape Design}\label{S4}

In this section, we establish observability for the degenerate equation \eqref{03.06.m} by combining the shape-design approximation with a multiplier argument on the regularized domains. We first work on each approximate problem posed on $\Omega_\e$, where the equation is uniformly hyperbolic and the multiplier computation is classical. We then pass to the limit $\e\to 0$ by using the convergence results obtained in Section \ref{S3}.

Let $\e\in (0,\f{1}{8}R_0)$. In this section we use standard notation from differential geometry. In particular, we continue to write $\pxi$ for $\f{\pt}{\pt x_i}$ and use $\de_{ij}, \de^{ij}$, or $\de_i^j$ for the Kronecker symbols.

\begin{notation}\label{03.05.N3}
	Let $\R^n$ have the usual topology and $x=(x^1,\cdots, x^N)$ be the natural coordinate system. Denote  the usual product and norm on $\R^N$ 
	\begin{equation*}
		X\cdot Y=\lg X, Y\rg_0=\sum_{i=1}^N X^iY^i, \quad |X|_0=(X\cdot X)^\f{1}{2}, 
	\end{equation*}
	where $X=\sum_{i=1}^NX^i\pxi, Y=\sum_{i=1}^N Y^i\pxi\in\R_x^N$ and $\R_x^N$ is the tangent space at the point $x\in\R^N$.  
	Let $X=\sum_{i=1}^N X^i\pxi$ be a vector field on $\R^N$, denote the divergence of $X$ in the Euclidean metric by 
	\begin{equation*}
		\Div_0(X)=\sum_{i=1}^N\f{\pt X^i}{\pt x_i}. 
	\end{equation*}
	For each $f\in C^1(\ol\Om)$, denote the gradient of $f$ by 
	\begin{equation}\label{11.26.2}
		\nabla f=\nabla_0 f=\sum_{i=1}^N \f{\pt f}{\pt x^i} \pxi. 
	\end{equation}
\end{notation}

\begin{notation}\label{03.05.N4}
	Set 
	\begin{equation}\label{11.23.1}
		g_\e(x)=(g_{ij}^\e)_{i,j=1,\cdots, N}=w_\e^{-1}I_N\mbox{ for all } x\in \Om_\e,    
	\end{equation}
	i.e., $g_{ij}^\e=w_\e^{-1}\de_{ij}=[|x|^{-\al}]|_{\Om_\e}\de_{ij}$ for all $i,j=1,\cdots, N$, 
	where  $\de_{ij}$ is the Kronecker symbol, and $I_N$ is the identity matrix. Then 
	\begin{equation}\label{11.23.2}
		g_\e^{-1}=(g_\e^{ij})_{i,j=1,\cdots, N}=w_\e I_N=A_\e I_N, 
	\end{equation}
	i.e., $g_\e^{ij}=w_\e \de^{ij}$ for all $i,j=1,\cdots, N$.  
	
	For each $x\in\Om_\e$, define the inner product and norm over the tangent space $(\Om_\e)_x=\R^N$ by 
	\begin{equation}\label{11.25.7}
		g_\e(X,Y)=\lg X, Y\rg_{g_\e}=\sum_{i,j=1}^N g_{ij}^\e X^iY^j,\quad  |X|_{g_\e}=\lg X, X\rg_{g_\e}^\f{1}{2}=\left(\sum_{i=1}^N w_\e^{-1}(X^i)^2\right)^\f{1}{2}, 
	\end{equation} 
	where $X=\sum_{i=1}^NX^i\pxi, Y=\sum_{i=1}^NY^i\pxi\in\Om_x$. Moreover, 
	\begin{equation}\label{11.24.5}
		\llg \pxi, \pxj\rrg_{g_\e}=w_\e^{-1}\de_{ij} \mbox{ for all } i,j=1,\cdots, N,
	\end{equation} 
	and  
	\begin{equation}\label{11.28.1}
		\lg X, A_\e(x)Y\rg_{g_\e}=X\cdot Y \mbox{ for all } x\in\R^N. 
	\end{equation}
	
	We directly verify that  $(\Om_\e, g_\e)$ is a $C^2$ Riemannian manifold. Its Christoffel symbol $\Ga_{ij}^k(\e)=\f{1}{2}\sum_{l=1}^N g_\e^{kl}(\f{\pt g_{li}^\e}{\pt x^j}+\f{\pt g_{lj}^\e}{\pt x^i}-\f{\pt g_{ij}^\e}{\pt x^l})\ (i,j,k=1,\cdots, N)$ is 
	\begin{equation}\label{11.24.9}
		\Ga_{ij}^k(\e)=-\f{\al}{2}|x|^{-2}\left(x^j\de_{ik}+x^i\de_{jk}-x^k\de_{ij}\right)
	\end{equation}
	by \eqref{12.14.3} and \eqref{11.23.2} and Assumption \ref{Assumption (H)}. 
	
	The gradient of a function $u$ on  $(\Om_\e,g_\e)$ is given by 
	\begin{equation}\label{11.24.3}
		\nabla_{g_\e} u=\sum_{i=1}^N g_\e^{ij}\f{\pt u}{\pt x^i}\f{\pt}{\pt x^j}=\sum_{i=1}^N w_\e \f{\pt u}{\pt x^i}\pxi, 
	\end{equation}
	then from \eqref{11.24.5} we have 
	\begin{equation}\label{11.24.4}
		\lg \nabla_{g_\e}u, \nabla_{g_\e}v\rg_{g_\e}=\sum_{i,j=1}^N w_\e^2\f{\pt u}{\pt x^i}\f{\pt v}{\pt x^j}\llg \pxi,\pxj\rrg_{g_\e}=\sum_{i=1}^N w_\e\f{\pt u}{\pt x^i}\f{\pt v}{\pt x^i}=(\nabla_0 u\cdot \nabla_0v)w_\e. 
	\end{equation}
	
	The divergence of a vector field $X=\sum_{i=1}^N X^i\pxi$ on $(\Om_\e,g_\e)$ is given by 
	\begin{equation*}
		\begin{split}
			\Div_{g_\e} X=\sum_{i=1}^N\f{\pt X^i}{\pt x^i}+\sum_{i,k=1}^N X^i\Ga_{ik}^k(\e)=\sum_{i=1}^N \f{\pt X^i}{\pt x^i}-\f{\al N}{2} |x|^{-2}\sum_{i=1}^NX^ix^i.
		\end{split}
	\end{equation*}
	Hence 
	\begin{equation}\label{11.24.1}
		\mcA u=-\sum_{i,j=1}^N \pxi\left(w_\e(x)\f{\pt u}{\pt x_j}\right)=-\Div_0(\nabla_{g_\e}u).
	\end{equation} 
	
	Denote the Levi-Civita connection in metric $g_\e$ by $D$, i.e., 
	\begin{equation}\label{11.24.8}
		D_{\pxi}\pxj=\sum_{k=1}^N\Ga_{ij}^k(\e) \pxk \mbox{ for all } i,j, k=1,\cdots, N. 
	\end{equation}
	In other words,  for each vector field $H$ on Riemannian manifold $(\Om_\e,g_\e)$, we have 
	\begin{equation}\label{11.24.7}
		DH(X,Y)=\lg D_X H, Y\rg_{g_\e} \mbox{ for all } X,Y\in(\Om_\e)_x. 
	\end{equation}
\end{notation}

\begin{remark}\label{11.24.R1}
	There are two metrics on $\Om_\e\subset\R^N$: the standard Euclidean metric and the Riemannian metric $g_\e$ for fixed $0<\e<\f{1}{8}R_0$. In what follows, we write $\nabla f=\nabla_0 f$ for the Euclidean gradient, while $\nabla_{g_\e}$ denotes the gradient on the Riemannian manifold $(\Omega_\e,g_\e)$.
\end{remark}

\subsection{Observability for the approximate equations}

In this subsection, we prove an observability estimate for the approximate equation \eqref{12.12.15}. Since the approximate problems are uniformly hyperbolic for all $\e\in (0,\f{1}{8}R_0)$, all integrations by parts are justified on $\Omega_\e$. This is precisely the technical advantage of the shape-design procedure.

\begin{lemma}\label{11.24.L1}
	Let $H$ be a $C^1$ vector field on the Riemannian  manifold $(\Om_\e,g_\e)$, let $f\in C^2(\Om_\e)$. Then 
	\begin{equation*}
		\begin{split}
			&\lg \nabla_{g_\e} f, \nabla_{g_\e} (H(f))\rg_{g_\e}(x)\\
			&=DH(\nabla_{g_\e}f, \nabla_{g_\e}f)(x)+\f{1}{2}\Div_0\left(|\nabla_{g_\e} f|_{g_\e}^2H\right)(x)-\f{1}{2}|\nabla_{g_\e}f|_{g_\e}^2(x)\Div_0(H)(x),  
		\end{split}
	\end{equation*}
	for all $x\in\Om_\e$. 
\end{lemma}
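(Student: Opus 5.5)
The identity is pointwise and purely algebraic, so I would prove it by a direct computation in the natural coordinates $x=(x^1,\dots,x^N)$ on $\Om_\e$, expanding both sides with \eqref{11.24.3}, \eqref{11.24.4}, \eqref{11.24.8} and the Christoffel symbols \eqref{11.24.9}. Write $H=\sum_k H^k\pxk$, so that $H(f)=\sum_k H^k\f{\pt f}{\pt x^k}$. Summation over repeated indices is understood, and $w=w_\e=|x|^\al$ on $\Om_\e$.

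\emph{Left-hand side.} By \eqref{11.24.4},
\begin{equation*}
\lg \nabla_{g_\e} f,\nabla_{g_\e}(H(f))\rg_{g_\e}=w\,\f{\pt f}{\pt x^i}\f{\pt}{\pt x^i}\Big(H^k\f{\pt f}{\pt x^k}\Big)
=w\,\f{\pt f}{\pt x^i}\f{\pt H^k}{\pt x^i}\f{\pt f}{\pt x^k}+w\,H^k\f{\pt f}{\pt x^i}\f{\pt^2 f}{\pt x^i\pt x^k}.
\end{equation*}

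\emph{Divergence terms.} Again by \eqref{11.24.4}, $|\nabla_{g_\e}f|_{g_\e}^2=w|\nabla f|^2$. Then the product rule gives
\begin{equation*}
\f12\Div_0\big(|\nabla_{g_\e}f|_{g_\e}^2H\big)-\f12|\nabla_{g_\e}f|_{g_\e}^2\Div_0 H=\f12 H\cdot\nabla\big(w|\nabla f|^2\big).
\end{equation*}
This equals $\f{\al}{2}|x|^{\al-2}(x\cdot H)|\nabla f|^2+wH^k\f{\pt f}{\pt x^i}\f{\pt^2 f}{\pt x^i\pt x^k}$, using $\nabla w=\al|x|^{\al-2}x$. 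The second-derivative terms on the two sides therefore cancel. It remains to show
\begin{equation*}
DH(\nabla_{g_\e}f,\nabla_{g_\e}f)=w\,\f{\pt f}{\pt x^i}\f{\pt H^k}{\pt x^i}\f{\pt f}{\pt x^k}-\f{\al}{2}|x|^{\al-2}(x\cdot H)|\nabla f|^2 .
\end{equation*}

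\emph{Covariant term.} Set $X=\nabla_{g_\e}f=w\nabla f$. By \eqref{11.24.8} we have $D_XH=X^i\big(\f{\pt H^k}{\pt x^i}+H^j\Ga_{ij}^k(\e)\big)\pxk$. Pairing with $X$ via \eqref{11.24.5} gives
\begin{equation*}
DH(X,X)=w^{-1}X^iX^k\f{\pt H^k}{\pt x^i}+w^{-1}X^iX^kH^j\Ga_{ij}^k(\e).
\end{equation*}
The first piece is exactly $w\,\f{\pt f}{\pt x^i}\f{\pt H^k}{\pt x^i}\f{\pt f}{\pt x^k}$. For the second piece, insert \eqref{11.24.9}. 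The three contractions give $(x\cdot H)|X|^2$, $(x\cdot X)(H\cdot X)$ and $-(x\cdot X)(H\cdot X)$. The last two cancel, leaving
\begin{equation*}
w^{-1}X^iX^kH^j\Ga_{ij}^k(\e)=-\f{\al}{2}|x|^{-2}w^{-1}(x\cdot H)|X|^2=-\f{\al}{2}|x|^{\al-2}(x\cdot H)|\nabla f|^2,
\end{equation*}
which is the required term.

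The only delicate step is this Christoffel-symbol contraction. One must track which index of $\Ga_{ij}^k(\e)$ pairs with $X$ and which with $H$, and use the exact sign convention of \eqref{11.24.9}. Everything else is the Leibniz rule.

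Regularity is not an issue: $f\in C^2$, $H\in C^1$, and $w$ is smooth on $\Om_\e$ because $0\notin\ol{\Om_\e}$ by Assumption \ref{Assumption (H)}. So every step holds pointwise for all $x\in\Om_\e$.
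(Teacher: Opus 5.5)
Your proposal is correct and is essentially the paper's own proof. The paper also expands both sides in the natural coordinates using \eqref{11.24.3}, \eqref{11.24.4} and the Christoffel symbols \eqref{11.24.9}. It reduces the Christoffel contribution to $-\frac{\alpha}{2}|x|^{\alpha-2}(x\cdot H)|\nabla f|^2$ and cancels it against the $\nabla w$ term in $\frac12 H\cdot\nabla\bigl(w|\nabla f|^2\bigr)$; your explicit check that two of the three contractions cancel is exactly the step the paper compresses into one line.
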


\begin{proof}
	Let $H=\sum_{i=1}^N H^i\pxi$. Then $H(f)=\sum_{i=1}^N H^i\f{\pt f}{\pt x^i}$, and from \eqref{11.24.3} we have 
	\begin{equation*}
		\begin{split} 
			\nabla_{g_\e}\left(\sum_{i=1}^N H^i\f{\pt f}{\pt x^i}\right)
			&=\sum_{j=1}^N w_\e \pxj\left(\sum_{i=1}^N H^i\f{\pt f}{\pt x^i}\right)\pxj \\
			&=\sum_{i,j=1}^N w_\e \f{\pt H^i}{\pt x^j}\f{\pt f}{\pt x^i}\pxj +\sum_{i,j=1}^N w_\e H^i\f{\pt^2f}{\pt x^j\pt x^i}\pxj. 
		\end{split} 
	\end{equation*}
	Hence
	\begin{equation}\label{11.24.6}
		\begin{split}
			\llg \nabla_{g_\e} f, \nabla_{g_\e}(H(f))\rrg_{g_\e}
			&=\sum_{i,j=1}^N w_\e\f{\pt H^i}{\pt x^j}\f{\pt f}{\pt x^i}\f{\pt f}{\pt x^j}+\sum_{i,j=1}^N w_\e H^i \f{\pt^2 f}{\pt x^i\pt x^j}\f{\pt f}{\pt x^j}
		\end{split}
	\end{equation}
	according to \eqref{11.24.4} and \eqref{11.24.5}. 
	
	On the one hand, we have 
	\begin{equation*}
		\begin{split}
			DH(\nabla_{g_\e}f, \nabla_{g_\e}f)
			&=\llg D_{\nabla_{g_\e}f}H, \nabla_{g_\e}f\rrg_{g_\e} =\sum_{i,j=1}^N w_\e \f{\pt H^i}{\pt x^j}\f{\pt f}{\pt x^i}\f{\pt f}{\pt x^j}+\sum_{i,j,k=1}^Nw_\e H^j\f{\pt f}{\pt x^i}\f{\pt f}{\pt x^k}\Ga_{ij}^k(\e)\\
			&=\sum_{i,j=1}^N w_\e \f{\pt H^i}{\pt x^j}\f{\pt f}{\pt x^i}\f{\pt f}{\pt x^j}-\f{\al}{2}|x|^{\al-2}\sum_{i,j=1}^N H^j x^j\left(\f{\pt f}{\pt x^i}\right)^2
		\end{split}
	\end{equation*}
	by \eqref{11.24.7}, and \eqref{11.24.5}, and \eqref{11.24.9} and \eqref{11.24.8};  
	on the other hand, we have 
	\begin{equation*}
		\begin{split}
			&\f{1}{2}\Div_0\left(|\nabla_{g_\e}f|_{g_\e}^2H\right)-\f{1}{2}|\nabla_{g_\e}f|_{g_\e}^2\Div_0(H)\\
			&=\f{1}{2}\sum_{i=1}^N \left(\pxi \lg \nabla_{g_\e}f, \nabla_{g_\e}f\rg_{g_\e} \right)H^i=\f{1}{2}\sum_{i,j=1}^n \f{\pt w_\e}{\pt x^i}\left(\f{\pt f}{\pt x^j}\right)^2H^i+\sum_{i,j=1}^N w_\e\f{\pt^2f}{\pt x^i\pt x^j}\f{\pt f}{\pt x^j}H^i\\
			&=\f{\al}{2}|x|^{\al-2}\sum_{i,j=1}^N H^ix^i\left(\f{\pt f}{\pt x^j}\right)^2+\sum_{i,j=1}^N w_\e\f{\pt^2f}{\pt x^i\pt x^j}\f{\pt f}{\pt x^j}H^i
		\end{split}
	\end{equation*}
	by \eqref{11.24.4}. 
	Combining these identities with \eqref{11.24.6}, we complete the proof of the lemma. 
\end{proof}

\begin{proposition}\label{11.24.P1}
	Let $y_\e$ be a solution of 
	\begin{equation}\label{11.26.1}
		\pt_{tt}y_\e+\mcA_\e y_\e=0 \mbox{ in }Q_\e.
	\end{equation} 
	
	{\rm(1)} If $H$ is a $C^1$ vector field on $\ol\Om_\e$, then 
	\begin{equation*}
		\begin{split}
			&\iint_{\pt Q_\e}\f{\pt y_\e}{\pt \nu_\e}H(y_\e)\df S\df t+\f{1}{2}\iint_{\pt Q_\e} \left((\pt_ty_\e)^2-|\nabla_{g_\e}y_\e|_{g_\e}^2\right)H\cdot \nu \df S\df t\\
			&=\int_{\Om_\e} [(\pt_ty_\e)H(y_\e)](t)\df x\bigg|_{t=0}^{t=T}\\
			&\hspace{4.5mm}+\iint_{Q_\e} DH(\nabla_{g_\e}y_\e, \nabla_{g_\e}y_\e)\df x\df t+\f{1}{2}\iint_{Q_\e} \left((\pt_ty_\e)^2-|\nabla_{g_\e}y_\e|_{g_\e}^2\right)\Div_0(H)\df x\df t. 
		\end{split}
	\end{equation*}
	Here and in what follows, we denote $\f{\pt y_\e}{\pt \nu_\e}=w_\e \nabla y_\e\cdot \nu$ for each $\e\in (0,\f{1}{8}R_0)$. 
	
	{\rm(2)} If $P\in C^2(\ol\Om_\e)$, then 
	\begin{equation*}
		\begin{split}
			\iint_{Q_\e} P\left((\pt_ty_\e)^2-|\nabla_{g_\e}y_\e|_{g_\e}^2\right)\df x\df t
			&=\int_{\Om_\e} (\pt_ty_\e)y_\e P\df x\bigg|_{t=0}^{t=T}+\f{1}{2}\iint_{Q_\e} y_\e^2 \mcA_\e P\df x\df t\\
			&\hspace{4.5mm}+\f{1}{2}\iint_{\pt Q_\e}y_\e^2 \nabla_{g_\e}P\cdot \nu \df S\df t-\iint_{\pt Q_\e}\f{\pt y_\e}{\pt \nu_\e}y_\e P\df S\df t.
		\end{split}
	\end{equation*}
\end{proposition}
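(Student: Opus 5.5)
We prove both identities by the standard multiplier method on the smooth domain $\Om_\e$. There the operator $\mcA_\e$ is uniformly elliptic and $\pt\Om_\e\in C^2$, so by density (Theorem \ref{12.12.T1} with smooth data) it suffices to take $y_\e$ smooth enough, say $y_\e\in C^2(\ol Q_\e)$. We then pass to the limit at the end using the $H^2$ and energy estimates \eqref{12.12.19}.

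\emph{Identity (1).} We multiply \eqref{11.26.1} by $H(y_\e)=H\cdot\nabla y_\e$ and integrate over $Q_\e$; recall $\mcA_\e y_\e=-\Div_0(\nabla_{g_\e}y_\e)$ by \eqref{11.24.1}. The time term is handled by integrating by parts in $t$:
\begin{equation*}
\iint_{Q_\e}\pt_{tt}y_\e H(y_\e)=\int_{\Om_\e}\pt_ty_\e H(y_\e)\Big|_0^T-\iint_{Q_\e}\pt_ty_\e H(\pt_ty_\e).
\end{equation*}
We then use $\pt_ty_\e H(\pt_ty_\e)=\f12\Div_0((\pt_ty_\e)^2H)-\f12(\pt_ty_\e)^2\Div_0H$, which produces the boundary term $\f12\iint_{\pt Q_\e}(\pt_ty_\e)^2H\cdot\nu$. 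For the spatial term, the Euclidean divergence theorem gives
\begin{equation*}
-\iint_{Q_\e}\Div_0(\nabla_{g_\e}y_\e)H(y_\e)=-\iint_{\pt Q_\e}\f{\pt y_\e}{\pt\nu_\e}H(y_\e)+\iint_{Q_\e}\lg\nabla_{g_\e}y_\e,\nabla_{g_\e}H(y_\e)\rg_{g_\e},
\end{equation*}
because $\nabla_{g_\e}y_\e\cdot\nu=w_\e\nabla y_\e\cdot\nu$ and $\nabla_{g_\e}y_\e\cdot\nabla_0 v=\lg\nabla_{g_\e}y_\e,\nabla_{g_\e}v\rg_{g_\e}$ by \eqref{11.24.4}. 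Lemma \ref{11.24.L1} rewrites the last integrand as $DH(\nabla_{g_\e}y_\e,\nabla_{g_\e}y_\e)+\f12\Div_0(|\nabla_{g_\e}y_\e|^2_{g_\e}H)-\f12|\nabla_{g_\e}y_\e|^2_{g_\e}\Div_0H$. Applying the divergence theorem once more gives the boundary term $\f12\iint_{\pt Q_\e}|\nabla_{g_\e}y_\e|^2_{g_\e}H\cdot\nu$. Collecting all terms, keeping boundary terms on the left and interior terms on the right, and tracking signs, yields (1).

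\emph{Identity (2).} We multiply \eqref{11.26.1} by $Py_\e$ and integrate. For the time term,
\begin{equation*}
\iint\pt_{tt}y_\e\,Py_\e=\int_{\Om_\e}\pt_ty_\e y_\e P\Big|_0^T-\iint P(\pt_ty_\e)^2.
\end{equation*}
For the spatial term, the divergence theorem gives $-\iint_{\pt Q_\e}\f{\pt y_\e}{\pt\nu_\e}y_\eP+\iint w_\e\nabla y_\e\cdot\nabla(Py_\e)$. We expand $\nabla(Py_\e)=P\nabla y_\e+y_\e\nabla P$, so the integral becomes $\iint P|\nabla_{g_\e}y_\e|^2_{g_\e}$ plus the cross term. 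Writing the cross term as $\f12\iint\nabla_{g_\e}P\cdot\nabla_0(y_\e^2)$ and integrating by parts once more gives $\f12\iint_{\pt Q_\e}y_\e^2\nabla_{g_\e}P\cdot\nu+\f12\iint y_\e^2\mcA_\e P$. Since the equation makes the total vanish, rearranging gives (2). The boundary term containing $y_\e^2$ actually vanishes because $y_\e=0$ on $\pt Q_\e$, but we keep it to match the statement.

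\emph{Main obstacle.} The computations themselves are routine; the genuine issues are bookkeeping and justification. First, signs and the convention $\mcA_\e=-\Div_0(w_\e\nabla\cdot)$ must be tracked consistently; note that \eqref{12.14.3} states it without the minus sign, and we use \eqref{11.24.1}. Second, the density step must be justified. For this we use that $\Om_\e$ is $C^2$ and $w_\e\in C^1(\ol\Om_\e)$ is bounded below by $\e^\al$. Then $y_\e\in L^2(0,T;H^2(\Om_\e))\cap H^1(0,T;H^1_0(\Om_\e))$ for data in $D(\mcA_\e)\times H_0^1$, and every term in both identities is continuous in this topology, since boundary traces of $\nabla y_\e$ lie in $L^2$ by the trace theorem. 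This is exactly the advantage of working on $\Om_\e$ rather than on $\Om$.
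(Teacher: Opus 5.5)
Your argument matches the paper's proof: multiply by $H(y_\e)$ (resp.\ $Py_\e$), integrate by parts in $t$ and by Green's formula in $x$, and invoke Lemma~\ref{11.24.L1} (resp.\ the pointwise expansion $\langle\nabla_{g_\e}y_\e,\nabla_{g_\e}(Py_\e)\rangle_{g_\e}=P|\nabla_{g_\e}y_\e|_{g_\e}^2+\frac{1}{2}\Div_0(y_\e^2\nabla_{g_\e}P)+\frac{1}{2}y_\e^2\mcA_\e P$), and your sign bookkeeping with $\mcA_\e=-\Div_0(w_\e\nabla\,\cdot\,)$ is correct. The density step you add goes beyond the paper, but Theorem~\ref{12.12.T1} (and a merely $C^2$ boundary) does not provide $C^2(\ol Q_\e)$ solutions, so it is cleaner either to justify the integrations by parts directly for $y_\e\in L^2(0,T;H^2(\Om_\e))\cap H^1(0,T;H_0^1(\Om_\e))\cap H^2(0,T;L^2(\Om_\e))$, or to approximate $y_\e$ in that space by smooth functions that need not solve the equation (neither identity uses the boundary condition, and the residual terms $\iint_{Q_\e}(\pt_{tt}y+\mcA_\e y)H(y)$ and $\iint_{Q_\e}(\pt_{tt}y+\mcA_\e y)Py$ of the approximants tend to zero).
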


\begin{proof}
	Multiplying \eqref{11.26.1} by $H(y_\e)$, integrating by parts, and using Green's formula, we obtain 
	\begin{equation*}
		\begin{split}
			&\iint_{Q_\e} (\pt_{tt}y_\e)H(y_\e)\df x\df t\\
			&=\iint_{Q_\e} \pt_t\left((\pt_ty_\e)H(y_\e)\right)\df x\df t-\iint_{Q_\e} (\pt_ty_\e)\pt_t[H(y_\e)]\df x\df t\\
			&=\int_{\Om_\e} (\pt_ty_\e(t))[H(y_\e)(t)]\df x\bigg|_{t=0}^T-\f{1}{2}\iint_{Q_\e} H([\pt_ty_\e]^2)\df x\df t\\
			&=\int_{\Om_\e}  [\pt_ty_\e(t)][H(y_\e)(t)]\df x\bigg|_{t=0}^{t=T}+\f{1}{2}\iint_{Q_\e} (\pt_ty_\e)^2\Div_0(H)\df x\df t-\f{1}{2}\iint_{\pt Q_\e} (\pt_ty_\e)^2 H\cdot \nu \df S\df t, 
		\end{split}
	\end{equation*}
	and 
	\begin{equation*}
		\begin{split}
			\iint_{Q_\e} (\mcA_\e y_\e)H(y_\e)\df x\df t
			&=-\iint_{\pt Q_\e}\Div_0[(w_\e\nabla y_\e) H(y_\e)]\df x\df t+\iint_{Q_\e} w_\e \nabla y_\e\cdot \nabla[H(y_\e)]\df x\df t\\ 
			&=-\iint_{\pt Q_\e}\f{\pt y_\e}{\pt \nu_\e}H(y_\e)\df S\df t+\iint_{Q_\e} \lg \nabla_{g_\e}y_\e, \nabla_{g_\e}(H(y_\e))\rg_{g_\e}\df x\df t\\
			&=-\iint_{\pt Q_\e}\f{\pt y_\e}{\pt \nu_\e}H(y_\e)\df S\df t+\iint_{Q_\e} DH(\nabla_{g_\e}y_\e, \nabla_{g_\e}y_\e)\df x\df t\\
			&\hspace{4.5mm}+\f{1}{2}\iint_{Q_\e} \Div_0\left(|\nabla_{g_\e}y_\e|_{g_\e}^2H\right)\df x\df t-\f{1}{2}\iint_{Q_\e} |\nabla_{g_\e} y_\e|_{g_\e}^2\Div_0(H)\df x\df t\\
			&=-\iint_{\pt Q_\e}\f{\pt y_\e}{\pt \nu_\e}H(y_\e)\df S\df t+\iint_{Q_\e} DH(\nabla_{g_\e}y_\e, \nabla_{g_\e}y_\e)\df x\df t\\
			&\hspace{4.5mm}+\f{1}{2}\iint_{\pt Q_\e} |\nabla_{g_\e}y_\e|_{g_\e}^2H\cdot\nu\df S\df t-\f{1}{2}\iint_{Q_\e} |\nabla_{g_\e} y_\e|_{g_\e}^2\Div_0(H)\df x\df t
		\end{split}
	\end{equation*}
	by \eqref{11.24.3}, \eqref{11.24.4}, and Lemma \ref{11.24.L1}, these two equalities imply (1) in this proposition. 
	
	(2) Clearly,
	\begin{equation*}
		\mcA_\e P=-\Div_0(w_\e\nabla P)=-\Div_0(\nabla_{g_\e}P)
	\end{equation*}
	by \eqref{11.24.3}. Note that from \eqref{11.25.7} and \eqref{11.24.4} we have 
	\begin{equation*}
		\begin{split}
			\llg \nabla_{g_\e}y_\e, \nabla_{g_\e}(Py_\e)\rrg_{g_\e}
			&=\llg \nabla y_\e, \nabla (Py_\e)\rrg_0 w_\e =\lg \nabla y_\e, y_\e\nabla P\rg_0 w_\e +\lg \nabla y_\e, P\nabla y_\e\rg_0 w_\e\\
			&=y_\e\lg \nabla_{g_\e}y_\e, \nabla_{g_\e}P\rg_{g_\e}+P|\nabla_{g_\e}y_\e|_{g_\e}^2=P|\nabla_{g_\e}y_\e|_{g_\e}^2+\f{1}{2}\nabla_{g_\e}P(y_\e^2)\\
			&=P|\nabla_{g_\e} y_\e|_{g_\e}^2+\f{1}{2}\Div_0\left(y_\e^2\nabla_{g_\e}P\right)+\f{1}{2}y_\e^2\mcA_\e P, 
		\end{split}
	\end{equation*}
Combining this with the divergence theorem, we obtain 
	\begin{equation*}
		\begin{split}
			\int_{\Om_\e} (\pt_ty_\e)(y_\e P)\df x\bigg|_{t=0}^{t=T}
			&=\iint_{Q_\e}\left[(\pt_{tt}y_\e)(y_\e P)+(\pt_ty_\e)^2P\right]\df x\df t\\
			&=\iint_{Q_\e}  \left[(y_\e P)\Div_0(w_\e\nabla y_\e)+(\pt_ty_\e)^2P\right]\df x\df t\\
			&=\iint_{\pt Q_\e}y_\e P\f{\pt y_\e}{\pt \nu_\e}\df S\df t+\iint_{Q_\e} \left[-\lg \nabla_{g_\e}y_\e, \nabla_{g_\e}(y_\e P)\rg_{g_\e}+(\pt_ty_\e)^2P\right]\df x\df t\\
			&=\iint_{Q_\e}P\left((\pt_ty_\e)^2-|\nabla_{g_\e}y_\e|_{g_\e}^2\right)\df x\df t-\f{1}{2}\iint_{Q_\e}y_\e^2\mcA_\e P\df x\df t\\
			&\hspace{4.5mm}+\iint_{\pt Q_\e}\f{\pt y_\e}{\pt \nu_\e}y_\e P\df S\df t-\f{1}{2}\iint_{\pt Q_\e}y_\e^2\nabla_{g_\e}P\cdot \nu\df S\df t. 
		\end{split}
	\end{equation*} 
	This shows (2) in this proposition. 
\end{proof}

The next lemma identifies the key geometric positivity generated by the multiplier field $H(x)=x$, which plays a crucial role in this paper.

\begin{lemma}\label{11.26.L1}
	Let 
	\begin{equation}\label{11.25.2}
		H=\sum_{k=1}^N H^k\pxk \mbox{ with } H^k=x^k, k=1,\cdots, N.  
	\end{equation}
	Then for all $X\in(\Om_\e)_x$ and $x\in\Om_\e$ we have 
	\begin{equation}\label{11.26.3}
		\begin{split}
			\lg D_X H, X\rg_{g_\e}=a|X|_{g_\e}^2 \mbox{ with } a=\f{1}{2}(2-\al). 
		\end{split}
	\end{equation} 
\end{lemma}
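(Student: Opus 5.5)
The approach is a direct computation in the global coordinates $x=(x^1,\cdots,x^N)$, using the explicit Christoffel symbols \eqref{11.24.9} of the conformally flat metric $g_\e=w_\e^{-1}I_N$ from Notation \ref{03.05.N4}. The aim is to show the stronger pointwise identity $D_XH=\f{2-\al}{2}X$. The claim then follows by taking the $g_\e$-inner product with $X$.

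\emph{Step 1: expand the covariant derivative.} Write $X=\sum_j X^j\pxj$. By the Leibniz rule and \eqref{11.24.8},
\begin{equation*}
D_XH=\sum_{k=1}^N\Big(\sum_{j=1}^N X^j\f{\pt H^k}{\pt x^j}+\sum_{i,j=1}^N X^jH^i\Ga_{ji}^k(\e)\Big)\pxk .
\end{equation*}
Since $H^k=x^k$, the first sum is simply $X^k$.

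\emph{Step 2: evaluate the Christoffel contraction.} Insert \eqref{11.24.9}, taking care with the index order $\Ga_{ji}^k$:
\begin{equation*}
\sum_{i,j=1}^N X^jx^i\Ga_{ji}^k(\e)=-\f{\al}{2}|x|^{-2}\sum_{i,j=1}^N X^jx^i\left(x^i\de_{jk}+x^j\de_{ik}-x^k\de_{ij}\right).
\end{equation*}
The three pieces sum to $|x|^2X^k$, $(X\cdot x)x^k$ and $-(X\cdot x)x^k$ respectively. The last two cancel, so the contraction equals $-\f{\al}{2}X^k$.

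This cancellation is the only point requiring care. Because the symbols in \eqref{11.24.9} are symmetric in the lower indices, the order of $i,j$ does not actually matter, but one should still check that the radial terms cancel. This is the algebraic reason why the radial field $H(x)=x$ works so well for the weight $|x|^\al$: a scaling field interacts with the homogeneous weight by producing only a multiple of the identity.

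\emph{Step 3: conclude.} Combining Steps 1 and 2 gives $D_XH=(1-\f{\al}{2})X$ at every $x\in\Om_\e$, which is well defined since $0\notin\ol\Om_\e$. Hence, by \eqref{11.24.7} and \eqref{11.25.7},
\begin{equation*}
\lg D_XH,X\rg_{g_\e}=\f{2-\al}{2}|X|_{g_\e}^2 .
\end{equation*}
The constant $a=\f{1}{2}(2-\al)>0$ is independent of $\e$ because $\al\in(0,1)$. This $\e$-independence is what later makes it possible to pass to the limit as $\e\ra 0$.
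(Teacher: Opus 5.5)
Your proof is correct and follows essentially the paper's route: a direct coordinate computation with the Christoffel symbols \eqref{11.24.9}, in which the two radial terms $\pm(X\cdot x)x^k$ cancel and only $-\frac{\alpha}{2}$ times the identity survives. The one difference is cosmetic. You first derive the slightly stronger vector identity $D_XH=\frac{2-\alpha}{2}X$ and then pair it with $X$, whereas the paper contracts $\sum_{i,j,k}X^iX^jH^k\Gamma_{ik}^j$ directly inside the $g_\varepsilon$-inner product.
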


\begin{proof}\label{12.15.1}
	We directly compute 
	\begin{equation*}
		\begin{split}
			\llg D_XH, X\rrg_{g_\e}
			&=\sum_{i,j,k=1}^N X^i X^j\llg D_\pxi H^k \pxk, \pxj\rrg_{g_\e}\\
			&=\sum_{i=1}^N w_\e^{-1}(X^i)^2+\sum_{i,j,k=1}^N w_\e^{-1}X^i X^j H^k \Ga_{ik}^j=\f{1}{2}(2-\al)\sum_{i=1}^N w_\e^{-1}(X^i)^2
		\end{split}
	\end{equation*}
	by \eqref{11.24.9} and $H^k=x^k$ for all $k=1,\cdots, N$. This completes the proof of the lemma. 
\end{proof}

We now introduce some notation that will be used later.

\begin{notation}\label{12.16.N1}
Let $H, a$ be defined in \eqref{11.25.2} and \eqref{11.26.3} respectively. Set 
\begin{equation}\label{11.26.5}
	\begin{split}
		b=\sup_{x\in\Om}|H|_{g_\e}(x),\quad P=\Div_0 H-a.  
	\end{split}
\end{equation}
Note that  for all $x\in \Om_\e$ we have 
\begin{equation*}
	|H|_{g_\e}^2=\sum_{i=1}^N w_\e^{-1}(H^i)^2=|x|^{2-\al}, \mbox{ and } R_0^\f{2-\al}{2}<b\leq M^\f{2-\al}{2}
\end{equation*}
by \eqref{11.25.7}, then 
\begin{equation}\label{11.27.3}
	P=N-a,
\end{equation}  
Denote 
\begin{equation}\label{11.26.7}
	c=N^2-a^2. 
\end{equation} 
We note that $a, b, c$, and $P$ are positive constants and are independent of $\e \in (0, \tfrac{1}{8} R_0)$.
\end{notation}

\begin{lemma}\label{11.26.L2}
	Let $H$ be defined by \eqref{11.25.2}. Let $y_\e$ be a solution of \eqref{12.12.15} with respect to $(y_\e^0, y_\e^1,f_\e=0)$. Then 
	\begin{equation}\label{11.27.1}
		\left|\left( \pt_ty_\e,H(y_\e)+\f{1}{2}Py_\e\right)_{L^2(\Om_\e)}\right|\leq bE_\e(0), 
	\end{equation}
	where 
	\begin{equation}\label{11.27.2}
		E_\e(t)=\f{1}{2}\int_{\Om_\e} \left((\pt_ty_\e)^2+|\nabla_{g_\e}y_\e|_{g_\e}^2\right)\df x=E_\e(0)=\f{1}{2}\left(\|y_\e^1\|_{L^2(\Om_\e)}^2+\left\||\nabla_{g_\e}y_\e^0|_{g_\e}\right\|_{L^2(\Om_\e)}^2\right). 
	\end{equation}
\end{lemma}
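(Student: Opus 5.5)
The plan is to reduce \eqref{11.27.1} to a pointwise-in-time Cauchy--Schwarz estimate, combined with energy conservation for the uniformly hyperbolic problem \eqref{12.12.15} with $f_\e=0$. Fix $t\in[0,T]$ and write $y=y_\e(t)$. By \eqref{11.24.3} and \eqref{11.25.7}, the directional derivative can be written in Riemannian form as
\begin{equation*}
H(y)=\sum_{i=1}^N x^i\f{\pt y}{\pt x^i}=\lg H,\nabla_{g_\e}y\rg_{g_\e}.
\end{equation*}
Hence, pointwise,
\begin{equation*}
|H(y)|\leq |H|_{g_\e}\,|\nabla_{g_\e}y|_{g_\e}\leq b\,|\nabla_{g_\e}y|_{g_\e},
\end{equation*}
by the definition of $b$ in \eqref{11.26.5}.

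The key step is to show that adding $\f12 Py$ does not increase the $L^2$ norm. I would first work with $y\in H^2(\Om_\e)\cap H_0^1(\Om_\e)$ and then argue by density in $H_0^1(\Om_\e)$. Expanding the square gives
\begin{equation*}
\int_{\Om_\e}\Big(H(y)+\f12Py\Big)^2\df x=\int_{\Om_\e}H(y)^2\df x+P\int_{\Om_\e}yH(y)\df x+\f{P^2}{4}\int_{\Om_\e}y^2\df x .
\end{equation*}
The middle term is $\f12\int_{\Om_\e}H(y^2)\df x$. The divergence theorem, together with $y=0$ on $\pt\Om_\e$ and $\Div_0H=N$, turns it into $-\f N2\int_{\Om_\e}y^2\df x$. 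Since $P=N-a$ by \eqref{11.27.3}, the total coefficient of $\int_{\Om_\e}y^2\df x$ is
\begin{equation*}
\f{P^2}{4}-\f{PN}{2}=\f{P(P-2N)}{4}=-\f{(N-a)(N+a)}{4}=-\f c4\leq 0,
\end{equation*}
with $c$ as in \eqref{11.26.7}. Therefore
\begin{equation*}
\Big\|H(y)+\f12Py\Big\|_{L^2(\Om_\e)}\leq \|H(y)\|_{L^2(\Om_\e)}\leq b\,\big\||\nabla_{g_\e}y|_{g_\e}\big\|_{L^2(\Om_\e)} .
\end{equation*}

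Next, Cauchy--Schwarz together with $\sigma\tau\leq\f12(\sigma^2+\tau^2)$ gives
\begin{equation*}
\Big|\Big(\pt_ty_\e,H(y_\e)+\f12Py_\e\Big)_{L^2(\Om_\e)}\Big|\leq b\,\|\pt_ty_\e\|_{L^2(\Om_\e)}\big\||\nabla_{g_\e}y_\e|_{g_\e}\big\|_{L^2(\Om_\e)}\leq bE_\e(t).
\end{equation*}
To conclude, I would use \eqref{11.24.4}, which identifies $\int_{\Om_\e}|\nabla_{g_\e}y_\e|_{g_\e}^2\df x$ with $\|y_\e\|^2_{H_0^1(\Om_\e;w_\e)}$, so that $E_\e$ is the standard energy. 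Its conservation, $E_\e(t)=E_\e(0)$, follows by multiplying by $\pt_ty_\e$ (as in Step 2 of Theorem \ref{12.11.T1}, now with $f_\e=0$, so the Gronwall step becomes an identity). This is done first for data in $D(\mcA_\e)\times H_0^1(\Om_\e;w_\e)$, using the regularity in Theorem \ref{12.12.T1}, and then extended to general data by the continuous dependence in \eqref{12.12.18}.

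The main obstacle is purely technical: justifying the integration by parts and the energy identity at each fixed $t$ for weak solutions. Because $w_\e$ is bounded above and below on $\Om_\e$, this is classical, and it is handled by the density argument just described.
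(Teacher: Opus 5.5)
Your proposal is correct and follows the paper's proof essentially step for step. You use the same ingredients: energy conservation $E_\e(t)=E_\e(0)$; the identity $\int_{\Om_\e}H(y_\e^2)\,\df x=-N\int_{\Om_\e}y_\e^2\,\df x$, which yields $\|H(y_\e)+\f12Py_\e\|_{L^2(\Om_\e)}^2=\|H(y_\e)\|_{L^2(\Om_\e)}^2-\f c4\|y_\e\|_{L^2(\Om_\e)}^2$; the bound $\|H(y_\e)\|_{L^2(\Om_\e)}\le b\,\||\nabla_{g_\e}y_\e|_{g_\e}\|_{L^2(\Om_\e)}$; and Cauchy--Schwarz with $\sigma\tau\le\f12(\sigma^2+\tau^2)$. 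The only difference is that you add regularity and density justification for the energy identity and the integration by parts, which the paper carries out only formally.
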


\begin{proof}
	Multiplying \eqref{12.12.15} by $\pt_ty_\e$, integrating on $\Om_\e\ts (0,t)$, we get
	\begin{equation*}
		\begin{split}
			0
			&=\iint_{\Om_\e\ts (0,t)}(\pt_{tt}y_\e)(\pt_ty_\e)\df x\df t-\iint_{\Om_\e\ts (0,t)}\left[\Div(w_\e\nabla y_\e)\right]\pt_ty_\e\df x\df t\\
			&=\f{1}{2}\iint_{\Om_\e\ts (0,t)}\pt_t\left(\pt_ty_\e\right)^2\df x\df t+\f{1}{2}\iint_{\Om_\e\ts (0,t)}\pt_t(w_\e\nabla y_\e\cdot \nabla y_\e)\df x\df t
		\end{split}
	\end{equation*}
	by $\pt_ty_\e=0$ on $\pt Q_\e$. This shows that 
	\begin{equation*}
		E_\e(t)=\f{1}{2}\int_{\Om_\e}\left((\pt_ty_\e)^2+|\nabla_{g_\e}y_\e|_{g_\e}^2\right)\df x=E_\e(0)
	\end{equation*}
	for all $t\in [0,T]$. 
	
	From the divergence theorem and $y_\e=0$ on $\pt\Om_\e$ we get
	\begin{equation*}
		\int_{\Om_\e} H(y_\e^2)\df x=\int_{\Om_\e} x\cdot \nabla y_\e^2\df x=-N\int_{\Om_\e} y_\e^2\df x, 
	\end{equation*}
	and hence 
	\begin{equation*}
		\begin{split}
			\left\|H(y_\e)+\f{1}{2}Py_\e\right\|_{L^2(\Om_\e)}^2
			&=\|H(y_\e)\|_{L^2(\Om_\e)}^2+(H(y_\e), Py_\e)_{L^2(\Om_\e)}+\f{1}{4}\|Py_\e\|_{L^2(\Om_\e)}^2\\
			&=\|H(y_\e)\|_{L^2(\Om_\e)}^2+\f{1}{2}\int_{\Om_\e} PH(y_\e^2)\df x+\f{1}{4}\|Py_\e\|_{L^2(\Om_\e)}^2\\
			&=\|H(y_\e)\|_{L^2(\Om_\e)}^2-\f{c}{4}\int_{\Om_\e} y_\e^2\df x
		\end{split}
	\end{equation*}
	by \eqref{11.26.5} and \eqref{11.27.3}, and \eqref{11.26.7}. Therefore, from \eqref{11.25.7} and \eqref{11.26.5} and 
	\begin{equation*}
		\begin{split}
			\|H(y_\e)\|_{L^2(\Om_\e)}^2
			&=\int_{\Om_\e} \left(\sum_{i=1}^N H^i\f{\pt y_\e}{\pt x^i}\right)^2\df x\leq \int_{\Om_\e} \left(\sum_{i=1}^N w_\e^{-1}(H^i)^2\right)\left(\sum_{i=1}^N w_\e \left(\f{\pt y_\e}{\pt x^i}\right)^2\right)\df x\\
			&=\int_{\Om_\e} |H(x)|_{g_\e}^2\left[\sum_{i=1}^N w_\e \left(\f{\pt y_\e}{\pt x^i}\right)^2\right]\df x\leq b^2\left\||\nabla_{g_\e}y_\e|_{g_\e}\right\|_{L^2(\Om_\e)}^2, 
		\end{split}
	\end{equation*} 
	we have 
	\begin{equation*}
		\begin{split}
			\left|\left(\pt_ty_\e, H(y_\e)+\f{1}{2}Py_\e\right)_{L^2(\Om_\e)}\right|
			&\leq \|\pt_ty_\e\|_{L^2(\Om_\e)}\left\|H(y_\e)+\f{1}{2}Py_\e\right\|_{L^2(\Om_\e)}\\
			&\leq \|\pt_ty_\e\|_{L^2(\Om_\e)}\|H(y_\e)\|_{L^2(\Om_\e)}\\
			&\leq b\|\pt_ty_\e\|_{L^2(\Om_\e)}\left\||\nabla_{g_\e}y_\e|_{g_\e}\right\|\leq bE_\e(0).
		\end{split}
	\end{equation*}
	This proves the lemma. 
\end{proof}

\begin{theorem}\label{11.27.T1}
	Let $\al\in (0,1)$. Set 
	\begin{equation}\label{11.27.4}
		\theta=\sup_{x\in\pt\Om_\e}\f{H\cdot\nu}{|\nu_\e|_{g_\e}^2}. 
	\end{equation}
	Then,  for all $T>0$ and all $y_\e^0\in H_0^1(\Om_\e;w_\e), y_\e^1\in L^2(\Om_\e)$, 
	\begin{equation}\label{11.28.8}
		\int_0^T\int_{\Ga_0} \left(\f{\pt y_\e}{\pt \nu_\e}\right)^2\df S\df t\geq \f{2}{\theta}\left(aT-2b\right)E_\e(0), 
	\end{equation}
	where $y_\e$ is the solution of \eqref{12.12.15} with respect to $(y_\e^0,y_\e^1,f_\e=0)$, and $a,b$ are defined in \eqref{11.26.3} and \eqref{11.26.5}, respectively. 
\end{theorem}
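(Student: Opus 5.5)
The proof is the classical multiplier argument with $H(x)=x$, written on the Riemannian manifold $(\Om_\e,g_\e)$. By density and the continuous dependence in Theorem \ref{12.12.T1}, it suffices to take smooth data ($y_\e^0\in D(\mcA_\e)$, $y_\e^1\in H_0^1(\Om_\e;w_\e)$). Then $y_\e$ is regular enough for Proposition \ref{11.24.P1}, since \eqref{12.12.15} is uniformly hyperbolic on the $C^2$ domain $\Om_\e$. The final estimate involves only $E_\e(0)$ and the boundary term, both continuous in the $H_0^1\times L^2$ topology by \eqref{12.12.18}, so it passes to general data.

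First I apply Proposition \ref{11.24.P1}(1) with $H=x$. Since $y_\e=0$ on $\pt Q_\e$, we have $\pt_ty_\e=0$ and $\nabla y_\e=\f{\pt y_\e}{\pt\nu}\nu$ there. Hence $H(y_\e)=\f{\pt y_\e}{\pt\nu}H\cdot\nu$ and $|\nabla_{g_\e}y_\e|_{g_\e}^2=w_\e(\f{\pt y_\e}{\pt\nu})^2$. With $\f{\pt y_\e}{\pt\nu_\e}=w_\e\f{\pt y_\e}{\pt\nu}$, the boundary side reduces to
\[
\f12\iint_{\pt Q_\e} w_\e\Big(\f{\pt y_\e}{\pt \nu}\Big)^2 H\cdot\nu\,\df S\df t=\f12\iint_{\pt Q_\e}\f{H\cdot\nu}{|\nu_\e|_{g_\e}^2}\Big(\f{\pt y_\e}{\pt\nu_\e}\Big)^2\df S\df t,
\]
where I interpret $\nu_\e$ so that $|\nu_\e|_{g_\e}^2=w_\e$; this is consistent with $|X|_{g_\e}^2=w_\e^{-1}|X|_0^2$ applied to the conormal $w_\e\nu$.

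On the interior side, Lemma \ref{11.26.L1} gives $DH(\nabla_{g_\e}y_\e,\nabla_{g_\e}y_\e)=a|\nabla_{g_\e}y_\e|^2_{g_\e}$, and $\Div_0H=N=P+a$. The interior integrand therefore becomes
\[
\tfrac a2\big((\pt_ty_\e)^2+|\nabla_{g_\e} y_\e|^2_{g_\e}\big)+\tfrac P2\big((\pt_ty_\e)^2-|\nabla_{g_\e}y_\e|^2_{g_\e}\big).
\]
The first part integrates to $aTE_\e(0)$ by energy conservation (Lemma \ref{11.26.L2}).

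For the second part I use Proposition \ref{11.24.P1}(2) with the constant $P$. Then $\mcA_\e P=0$ and $\nabla_{g_\e}P=0$, and the boundary terms vanish because $y_\e=0$. So the $P$-term equals $\f12\int_{\Om_\e}P(\pt_ty_\e)y_\e\,\df x\big|_0^T$. Combining the two time-boundary terms gives $\big(\pt_ty_\e,H(y_\e)+\f12Py_\e\big)_{L^2(\Om_\e)}\big|_0^T$, whose absolute value is at most $2bE_\e(0)$ by \eqref{11.27.1}. This yields
\[
\f12\iint_{\pt Q_\e}\f{H\cdot\nu}{|\nu_\e|^2_{g_\e}}\Big(\f{\pt y_\e}{\pt\nu_\e}\Big)^2\df S\df t\ \geq\ (aT-2b)E_\e(0).
\]

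Finally, I split $\pt\Om_\e$. On $\pt\Om_\e\cap B(0,R_0)$, Assumption \ref{Assumption (H)} gives $H\cdot\nu\le0$, so that part can be dropped. On $\pt\Om_\e-B(0,R_0)=\pt\Om-B(0,R_0)$ the integrand is positive only on $\Ga_0$, and there I bound it by $\theta$. Multiplying by $2/\theta$ gives \eqref{11.28.8}.

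I expect the main obstacle to be the bookkeeping of the boundary identity. One must check that $H(y_\e)$ and $|\nabla_{g_\e}y_\e|^2$ combine with the correct signs, namely the $1$ versus $-\f12$ coefficients from Proposition \ref{11.24.P1}(1), into exactly $\f12 H\cdot\nu\, w_\e(\pt_\nu y_\e)^2$. One must also match the normalization $|\nu_\e|_{g_\e}^2$ in $\theta$; if that normalization is off, only the constant in front changes.
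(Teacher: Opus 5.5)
Your proposal is correct and follows essentially the same route as the paper. Both arguments run as follows: apply the multiplier $H=x$ through Proposition \ref{11.24.P1}(1); reduce the boundary terms using $y_\e=0$ on $\pt Q_\e$ together with $|\nu_\e|_{g_\e}^2=w_\e$; split $\Div_0 H=P+a$ and use Lemma \ref{11.26.L1} for $DH$; treat the constant $P$ with Proposition \ref{11.24.P1}(2); control the time-boundary terms by Lemma \ref{11.26.L2} at $t=0$ and $t=T$; and finally drop $\pt\Om_\e\cap B(0,R_0)$ by Assumption \ref{Assumption (H)} and bound by $\theta$ on $\Ga_0$. Your preliminary density reduction to data in $D(\mcA_\e)\times H_0^1(\Om_\e;w_\e)$, passing back to general data via \eqref{12.12.18}, is a justification the paper leaves implicit and does not change the argument.
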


\begin{proof}
	Note that from \eqref{11.25.7} and $\nu_\e=w_\e \nu$ we get  $|\nu_\e|_{g_\e}^2(x)=\sum_{i=1}^Nw_\e(\nu^i)^2(x)=|x|^\al$ on $\Om_\e$, where $\nu$ is the outer normal vector on $\pt\Om_\e$. And from  $H\cdot\nu=\sum_{i=1}^N x^i \nu^i$ and $\al\in (0,1)$, we obtain $\f{H\cdot\nu}{|\nu_\e|_{g_\e}^2}\leq |x|^{1-\al}$ on $\pt\Om_\e$, and 
	\begin{equation}\label{11.27.6}
		0\le \theta=\sup_{x\in\pt\Om_\e}\f{H\cdot\nu}{|\nu_\e|_{g_\e}^2}\le M^{1-\al}, \mbox{ and $\theta$ is independent of $\e>0$}. 
	\end{equation}  
	
	Let $x\in \pt\Om_\e$. Then we have 
	\begin{equation}\label{11.28.3}
		\nabla_{g_\e}y_\e(x)=\llg \nabla_{g_\e}y_\e(x), \f{\nu_\e(x)}{|\nu_\e|_{g_\e}}\rrg_{g_\e}\f{\nu_\e(x)}{|\nu_\e|_{g_\e}}+Y_\e(x) \mbox{ with } \lg Y_\e(x), \nu_\e(x)\rg_{g_\e}=0. 
	\end{equation}
	Note that from \eqref{11.23.1} and  \eqref{11.28.1} we get 
	\begin{equation*}
		Y_\e(x)\cdot \nu(x)=\llg Y(x), \nu_\e(x)\rrg_{g_\e}=0, 
	\end{equation*}
	this implies that $Y_\e(x)\in (\pt\Om_\e)_x$, the tangent space of $\pt\Om_\e$ at $x$. Combining this with \eqref{11.28.3} and the boundary condition $y_\e=0$ on $\pt\Om_\e$, we obtain 
	\begin{equation}\label{11.28.4}
		\begin{split}
			\left|\nabla_{g_\e}y_\e\right|_{g_\e}^2
			&=\nabla_{g_\e}y_\e(y_\e)=\f{1}{|\nu_\e(x)|_{g_\e}^2}\llg \nabla_{g_\e}y_\e(x), \nu_\e(x)\rrg_{g_\e}^2+Y_\e(y_\e)=\f{1}{|\nu_\e|_{g_\e}^2}\left(\f{\pt y_\e}{\pt\nu_\e}\right)^2. 
		\end{split}
	\end{equation}
	Similar to \eqref{11.28.3}, $H$ has the form
	\begin{equation}\label{11.28.5}
		\begin{split}
			H=\llg H(x), \f{\nu_\e(x)}{|\nu_\e(x)|_{g_\e}}\rrg_{g_\e}\f{\nu_\e(x)}{|\nu_\e(x)|_{g_\e}}+Z_\e(x), 
		\end{split}
	\end{equation}
	where $Z_\e(x)\in (\pt\Om_\e)_x$. Combining this with \eqref{11.23.1}, \eqref{11.28.1}, and the boundary condition $y_\e=0$ on $\pt\Om_\e$, we obtain 
	\begin{equation}\label{11.28.6}
		\begin{split}
			H(y_\e)=\f{\llg H(x), \nu_\e(x)\rrg_{g_\e}}{|\nu_\e(x)|_{g_\e}^2}\f{\pt y_\e}{\pt \nu_\e}=\f{H(x)\cdot \nu(x)}{|\nu_\e(x)|_{g_\e}^2}\f{\pt y_\e}{\pt \nu_\e}. 
		\end{split}
	\end{equation}
	
	Now, from Assumption \ref{Assumption (H)}, we have
	\begin{equation}\label{11.28.7}
		\begin{split}
			&\f{\theta}{2}\int_0^T\int_{\Ga_0}\left(\f{\pt y_\e}{\pt\nu_\e}\right)^2\df S\df t\\
			&\geq  \f{1}{2}\iint_{\pt Q_\e} \left(\f{\pt y_\e}{\pt \nu_\e}\right)^2\f{H\cdot \nu}{|\nu_\e|_{g_\e}^2}\df S\df t\\
			&=\iint_{\pt Q_\e}\f{\pt y_\e}{\pt\nu_\e}H(y_\e)\df S\df t+\f{1}{2}\iint_{\pt Q_\e}\left((\pt_ty_\e)^2-|\nabla_{g_\e}y_\e|_{g_\e}^2\right)H\cdot \nu\df S\df t\\
			&=\int_{\Om_\e} [(\pt_ty_\e)H(y_\e)](t)\df x\bigg|_{t=0}^{t=T}\\
			&\hspace{4.5mm}+\iint_{Q_\e} \llg D_{\nabla_{g_\e}y_\e } H, \nabla_{g_\e}y_\e\rrg_{g_\e}\df x\df t+\f{1}{2}\iint_{Q_\e} \left((\pt_ty_\e)^2-|\nabla_{g_\e}y_\e|_{g_\e}^2\right)\Div_0(H)\df x\df t\\
			&= \f{a}{2}\iint_{Q_\e} \left((\pt_ty_\e)^2+|\nabla_{g_\e}y_\e|_{g_\e}^2\right)\df x\df t\\
			&\hspace{4.5mm}+\iint_{Q_\e} \llg D_{\nabla_{g_\e}y_\e } H, \nabla_{g_\e}y_\e\rrg_{g_\e}\df x\df t-a\iint_{Q_\e} |\nabla_{g_\e}y_\e|_{g_\e}^2\df x\df t\\
			&\hspace{4.5mm}+\int_{\Om_\e} [(\pt_ty_\e)H(y_\e)](t)\df x\bigg|_{t=0}^{t=T}+\f{1}{2}\iint_{Q_\e} \left((\pt_ty_\e)^2-|\nabla_{g_\e}y_\e|_{g_\e}^2\right)P\df x\df t, 
		\end{split}
	\end{equation}
	where the first inequality follows from Assumption \ref{Assumption (H)} and \eqref{11.27.4}, the first equality follows from \eqref{11.28.4}, \eqref{11.28.6}, and $\pt_ty_\e=0$ on $\pt\Om_\e$, the second equality follows from part (1) of Proposition \ref{11.24.P1}, and the last equality uses $\Div_0(H)=N$ together with \eqref{11.27.3}. Then, from part (2) of Proposition \ref{11.24.P1}, \eqref{11.26.3}, \eqref{11.26.5}, \eqref{11.27.3}, and Lemma \ref{11.26.L2}, we get 
	\begin{equation*}
		\begin{split}
			\f{\theta}{2}\int_0^T\int_{\Ga_0}\left(\f{\pt y_\e}{\pt\nu_\e}\right)^2\df S\df t
			&\geq a TE_\e(0)+\int_\Om (\pt_ty_\e)\left(H(y_\e)+\f{1}{2}Py_\e\right)\df x\bigg|_{t=0}^{t=T}\\
			&\geq (aT-2b)E_\e(0). 
		\end{split}
	\end{equation*}
	From $\pt\Om-B(0,R_0)=\pt\Om_\e-B(0,R_0)$ we get \eqref{11.28.8}. This proves the theorem. 
\end{proof}

\begin{corollary}
	Under the assumptions stated in Theorem \ref{11.27.T1}, then 
	\begin{equation*}
		\int_0^T\int_{\Ga_0}\left(\f{\pt y_\e}{\pt \nu}\right)^2\df S\df t\geq \f{2}{\theta M^{2\al}}(aT-2b)E_\e(0),
	\end{equation*}
	where $a,b$ are defined in \eqref{11.26.3} and \eqref{11.26.5}, respectively. 
	Moreover, when $T>\f{2b}{a}$, we have 
	\begin{equation}\label{12.04.2}
		\int_0^T\int_{\Ga_0}\left(\f{\pt y_\e}{\pt \nu}\right)^2\df S\df t\geq CE_\e(0),
	\end{equation}
	where the constant $C>0$ depends only on $\al, R_0, T$ and $M$. 
\end{corollary}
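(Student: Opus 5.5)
The corollary follows from Theorem \ref{11.27.T1} once the weighted conormal derivative $\f{\pt y_\e}{\pt\nu_\e}$ is compared with the Euclidean normal derivative $\f{\pt y_\e}{\pt\nu}$ on $\Ga_0$. By definition $\f{\pt y_\e}{\pt\nu_\e}=w_\e\nabla y_\e\cdot\nu=|x|^\al\f{\pt y_\e}{\pt\nu}$. On $\Ga_0\s\pt\Om-B(0,R_0)$ we have $R_0\le|x|\le M$, since $M=\sup_{x\in\Om}|x|+1$. Hence, pointwise on $\Ga_0\ts(0,T)$,
\begin{equation*}
	\left(\f{\pt y_\e}{\pt\nu_\e}\right)^2=|x|^{2\al}\left(\f{\pt y_\e}{\pt\nu}\right)^2\le M^{2\al}\left(\f{\pt y_\e}{\pt\nu}\right)^2 .
\end{equation*}
Integrating over $\Ga_0\ts(0,T)$ and combining with \eqref{11.28.8} gives
\begin{equation*}
	M^{2\al}\int_0^T\int_{\Ga_0}\left(\f{\pt y_\e}{\pt\nu}\right)^2\df S\df t\ge\f{2}{\theta}(aT-2b)E_\e(0),
\end{equation*}
and dividing by $M^{2\al}$ yields the first inequality.

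There is one technicality. Theorem \ref{11.27.T1} is stated for $H_0^1\ts L^2$ data, but its proof uses the multiplier identities of Proposition \ref{11.24.P1}, which need smoother solutions. I would first take $y_\e^0\in D(\mcA_\e)$ and $y_\e^1\in H_0^1(\Om_\e;w_\e)$. For such data Theorem \ref{12.12.T1} gives the needed regularity, and since $\Om_\e$ is a $C^2$ domain with uniformly elliptic $\mcA_\e$, one has $D(\mcA_\e)=H^2(\Om_\e)\cap H_0^1(\Om_\e)$. General data are then handled by density. Both $E_\e(0)$ and the boundary term are continuous with respect to the data: the boundary term by the hidden regularity estimate \eqref{12.12.18}, since $\Ga_0\s\pt\Om_\e-B(0,R_0)$. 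So the inequality passes to the limit.

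For the second statement, take $T>\f{2b}{a}$, so that $aT-2b>0$, and set $C=\f{2(aT-2b)}{\theta M^{2\al}}$. I need $C$ finite and independent of $\e$, which requires the bound $0<\theta\le M^{1-\al}$ from \eqref{11.27.6} and the fact that $\theta$ does not depend on $\e$.

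\textbf{Main obstacle.} Positivity of $\theta$ is the delicate point. If $\theta=0$, then $\Ga_0$ is empty and the inequality is degenerate. In that case Theorem \ref{11.27.T1} would force $(aT-2b)E_\e(0)\le 0$, so $E_\e(0)=0$, and \eqref{12.04.2} holds trivially with any $C$. Otherwise $\theta>0$, and we may use the upper bound $\theta\le M^{1-\al}$ to replace $C$ by the $\e$-independent constant $\f{2(aT-2b)}{M^{1+\al}}$. This constant depends only on $\al$, $T$, $M$, and on $R_0$ through $b$.
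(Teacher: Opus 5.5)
Your proof is correct and is essentially the paper's argument. On $\Ga_0$ one has $\f{\pt y_\e}{\pt\nu_\e}=|x|^\al\f{\pt y_\e}{\pt\nu}$ with $|x|\le M$, so Theorem \ref{11.27.T1} transfers after dividing by $M^{2\al}$; your density step and the bound $\theta\le M^{1-\al}$ only make explicit what the paper leaves implicit. The case $\theta=0$ you worry about cannot occur: $\int_{\pt\Om}x\cdot\nu\,\df S=N|\Om|>0$ forces $\Ga_0\neq\es$, and hence $\theta>0$.
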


\begin{proof}
	From $\nu_\e=w_\e \nu$, $\Ga_0\s \pt\Om-B(0,R_0)=\pt\Om_\e-B(0,R_0)$ for $0<\e<\f{1}{8}R_0$, Assumption \ref{Assumption (H)}, and Theorem \ref{11.27.T1}, we obtain the desired conclusion. This completes the proof of the corollary.
\end{proof}

\subsection{Approximation}

We now pass from the approximate observability estimate to the observability of the original degenerate equation with respect to $(y^0,y^1,0)$.

\begin{lemma}\label{12.04.L2}
	Let $a,b$ be defined in \eqref{11.26.3} and \eqref{11.26.5}, respectively. Let $y^0\in C_0^\iy(\Om), y^1\in C_0^\iy(\Om)$, and $f=0$. Then, for $T>\f{2b}{a}$, there exist two positive constants $C_1$ and $C_2$, depending only on $\al, R_0, T, M$, and $\Om-B(0,\f{1}{2}R_0)$, such that 
	\begin{equation}\label{12.04.1}
		\begin{split}
			C_1E(0)\leq \int_0^T\int_{\Ga_0}\left(\f{\pt y}{\pt\nu}\right)^2\df S\df t\leq C_2E(0), 
		\end{split}
	\end{equation}
	where $y$ is the weak solution of \eqref{03.06.m} with respect to $(y^0,y^1,f=0)$, and 
	\begin{equation*}
		E(0)=\f{1}{2}\int_\Om \left((y^1)^2+w\nabla y^0\cdot \nabla y^0\right)\df x. 
	\end{equation*}
\end{lemma}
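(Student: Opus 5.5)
Two inequalities are needed. The upper bound in \eqref{12.04.1} is the hidden regularity estimate: by Theorem \ref{12.11.T1}, estimate \eqref{12.11.1} with $f=0$, we have
\begin{equation*}
\left\|\f{\pt y}{\pt\nu}\right\|_{L^2(0,T;L^2(\pt\Om-B(0,\f12R_0)))}^2\le C\left(\|y^0\|_{H_0^1(\Om;w)}^2+\|y^1\|_{L^2(\Om)}^2\right)=2C\,E(0).
\end{equation*}
Since $\Ga_0\s\pt\Om-B(0,R_0)$ by Assumption \ref{Assumption (H)}, this gives the right-hand inequality with $C_2=2C$, depending only on $\al,R_0,M,T$ and $\Om-B(0,\f12R_0)$.

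For the lower bound I pass to the limit in the approximate observability inequality \eqref{12.04.2}. Fix $y^0,y^1\in C_0^\iy(\Om)$ and restrict to $\e<\min\{\f18R_0,\dist(\supp y^0,0),\dist(\supp y^1,0)\}$. Then $\supp y^0,\supp y^1\s\Om_\e$, so $y_\e^0=y^0$ and $y_\e^1=y^1|_{\Om_\e}$ are admissible data. Moreover,
\begin{equation*}
E_\e(0)=\f12\int_{\Om_\e}\left((y^1)^2+w\nabla y^0\cdot\nabla y^0\right)\df x=E(0),
\end{equation*}
because both integrands vanish on $\Om-\Om_\e$. For $T>\f{2b}{a}$, \eqref{12.04.2} then gives
\begin{equation*}
\int_0^T\int_{\Ga_0}\left(\f{\pt y_\e}{\pt\nu}\right)^2\df S\df t\ge C\,E(0),
\end{equation*}
with $C$ independent of $\e$, since $a,b,\theta$ and $M$ do not depend on $\e$ (Notation \ref{12.16.N1} and \eqref{11.27.6}).

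Next I apply the second part of Theorem \ref{12.12.T2}, i.e.\ \eqref{12.13.3}, which holds exactly for smooth compactly supported data with $f=0$. Along a subsequence, $\f{\pt y_\e}{\pt\nu}\ra\f{\pt y}{\pt\nu}$ strongly in $L^2(0,T;L^2(\pt\Om-B(0,R_0)))$, hence strongly in $L^2(\Ga_0\ts(0,T))$. Strong convergence lets the $L^2$ norms pass to the limit, giving $\int_0^T\int_{\Ga_0}(\f{\pt y}{\pt\nu})^2\df S\df t\ge C\,E(0)$, and we take $C_1=C$.

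The main point to check is the strong boundary convergence. Weak convergence alone, from the first part of Theorem \ref{12.12.T2}, would only give lower semicontinuity $\liminf\|\cdot\|\ge\|\cdot\|$, which points the wrong way for a lower bound. This is why the lemma is stated only for $C_0^\iy$ data: it is what allows the compactness argument behind \eqref{12.13.3}, namely the uniform $H^1$-in-time bound together with the $H^{1/2}$ trace bound. I would also verify that the normalization in \eqref{12.04.2} uses the Euclidean normal derivative $\f{\pt y_\e}{\pt\nu}$, matching \eqref{12.13.3}; if it used $\f{\pt y_\e}{\pt\nu_\e}$ instead, the weight $w$ is bounded above and below on $\Ga_0$, since $R_0\le|x|\le M$ there, so only the constant changes.
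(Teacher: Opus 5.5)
Your proof is correct and follows the paper's argument: the upper bound comes from the hidden-regularity estimate \eqref{12.11.1} restricted to $\Ga_0\s\pt\Om-B(0,R_0)$, and the lower bound comes from \eqref{12.04.2} with $E_\e(0)=E(0)$ for small $\e$, passed to the limit via the strong boundary convergence \eqref{12.13.3}. One small fix: Assumption~\ref{Assumption (H)} only gives $\Om-\Om_\e\s B(0,2\e)$, so to guarantee $\supp y^0,\supp y^1\s\Om_\e$ you should take $\e<\f12\min\{\dist(\supp y^0,0),\dist(\supp y^1,0)\}$ rather than $\e<\dist(\supp y^i,0)$.
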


\begin{proof}
	Let $\e\in (0,\de_0)$ with $\de_0=\min\{\f{1}{8}R_0,\dist (\supp y^0,\pt\Om), \dist (\supp y^1, \pt\Om)\}$.  Then $E(0)=E_\e(0)$ for all $\e\in (0,\de_0)$. And hence the second inequality in \eqref{12.04.1}  follows from   \eqref{12.11.1}. 
	
	From $y^0\in C_0^\iy(\Om)$ we get $y^0\in D(\mcA)\cap D(\mcA_\e)$ for $0<\e<\de_0$. Then, from \eqref{12.04.2}, the identity $E(0)=E_\e(0)$ for all $\e\in (0,\de_0)$, and \eqref{12.13.3}, by letting $\e\ra 0$ along the subsequence in \eqref{12.13.3}, we obtain 
	\begin{equation*}
		\int_0^T\int_{\Ga_0}\left(\f{\pt y}{\pt\nu}\right)^2\df S\geq C_1E(0). 
	\end{equation*}
	This completes the proof of this lemma. 
\end{proof}

\begin{theorem}\label{12.04.T3}
	Let $a,b$ be defined in \eqref{11.26.3} and \eqref{11.26.5}, respectively. Let $y^0\in H_0^1(\Om;w), y^1\in L^2(\Om)$ and $f=0$. Then, for $T>\f{2b}{a}$, there exist two positive constants $C_1$ and $C_2$, depending only on $\al, R_0, T, M$, and $\Om-B(0,\f{1}{2}R_0)$, such that 
	\begin{equation}\label{12.04.3}
		\begin{split}
			C_1E(0)\leq \int_0^T\int_{\Ga_0}\left(\f{\pt y}{\pt\nu}\right)^2\df S\df t\leq C_2E(0),
		\end{split}
	\end{equation}
	where $y$ is the weak solution of \eqref{03.06.m} with respect to $(y^0,y^1,f=0)$. 
\end{theorem}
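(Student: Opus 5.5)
We argue by density, starting from Lemma \ref{12.04.L2}, which already gives \eqref{12.04.3} for data $y^0,y^1\in C_0^\iy(\Om)$ and $f=0$. The upper bound in \eqref{12.04.3} needs no density argument. It follows from the hidden regularity estimate \eqref{12.11.1} of Theorem \ref{12.11.T1}, because $\Ga_0\s\pt\Om-B(0,R_0)\s\pt\Om-B(0,\f{1}{2}R_0)$ and $E(0)$ is equivalent to $\|y^0\|_{H_0^1(\Om;w)}^2+\|y^1\|_{L^2(\Om)}^2$ by \eqref{08.19.2}. So it remains to transfer the lower bound.

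Fix $y^0\in H_0^1(\Om;w)$ and $y^1\in L^2(\Om)$. By the definition of $H_0^1(\Om;w)$ as the closure of $C_0^\iy(\Om)$, choose $y^0_k\in C_0^\iy(\Om)$ with $y^0_k\ra y^0$ in $H_0^1(\Om;w)$. Also choose $y^1_k\in C_0^\iy(\Om)$ with $y^1_k\ra y^1$ in $L^2(\Om)$. Let $y_k$ be the solution of \eqref{03.06.m} with data $(y_k^0,y_k^1,0)$, and set $E_k(0)=\f{1}{2}\int_\Om((y_k^1)^2+w|\nabla y_k^0|^2)\df x$. Then $E_k(0)\ra E(0)$. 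Since the problem is linear, $y_k-y$ solves \eqref{03.06.m} with data $(y_k^0-y^0,y_k^1-y^1,0)$. Applying \eqref{12.11.1} to this difference gives
\begin{equation*}
\left\|\f{\pt y_k}{\pt\nu}-\f{\pt y}{\pt\nu}\right\|_{L^2(0,T;L^2(\Ga_0))}\le C\left(\|y_k^0-y^0\|_{H_0^1(\Om;w)}+\|y_k^1-y^1\|_{L^2(\Om)}\right)\ra 0 .
\end{equation*}
Lemma \ref{12.04.L2} gives $C_1E_k(0)\le\int_0^T\int_{\Ga_0}(\pt y_k/\pt\nu)^2\df S\df t$, with $C_1$ independent of $k$ because it depends only on $\al,R_0,T,M$ and $\Om-B(0,\f12R_0)$. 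Letting $k\ra\iy$ on both sides yields the lower bound in \eqref{12.04.3}.

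The main obstacle is making sense of $\f{\pt y}{\pt\nu}$ on $\Ga_0$ for finite-energy data. Theorem \ref{12.11.T1} proves the estimate \eqref{12.11.1} through the multiplier computation of Step 9, and that step uses the $H^2$ regularity away from the origin, which is only available for smoother data. I therefore plan to define the normal derivative for general data as the $L^2(0,T;L^2(\pt\Om-B(0,\f12R_0)))$ limit of the normal derivatives of smooth approximations. The map is well defined because \eqref{12.11.1}, applied to differences of smooth solutions, shows these normal derivatives form a Cauchy sequence, and the limit does not depend on the chosen approximating sequence. With this interpretation, which is the one implicit in \eqref{12.11.1}, the limiting argument above is rigorous.
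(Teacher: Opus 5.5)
Your proposal is correct and matches the paper's proof essentially step for step. You take the upper bound from the hidden-regularity estimate \eqref{12.11.1}, then get the lower bound by approximating $(y^0,y^1)$ with $C_0^\iy(\Om)$ data, applying \eqref{12.11.1} to the difference of solutions, invoking Lemma \ref{12.04.L2} with approximation-independent constants, and passing to the limit. Your extra remark is also sound: since Step 9 of Theorem \ref{12.11.T1} relies on $H^2$ regularity away from the origin, $\frac{\pt y}{\pt\nu}$ for finite-energy data has to be read as the $L^2$ limit of normal derivatives of smooth approximations, which the paper leaves implicit.
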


\begin{proof}
	The second inequality in \eqref{12.04.3} follows from \eqref{12.11.1} in Theorem \ref{12.11.T1}. 
	
	Let $\{\vp_\e^0\}_{0<\e<1}\s C_0^\iy(\Om)$ such that $\vp_\e^0\ra y^0$ in $H_0^1(\Om;w)$, and $\{\vp_\e^1\}_{0<\e<1}\s C_0^\iy(\Om)$ such that $\vp_\e^1\ra y^1$ in $L^2(\Om)$. Consider the following equation
	\begin{equation*}
		\begin{cases}
			\pt_{tt}\vp_\e-\mcA \vp_\e=0, &\mbox{in }Q, \\
			\vp_\e=0, &\mbox{on }\pt Q, \\
			\vp_\e(0)=\vp_\e^0, \pt_t\vp_\e(0)=\vp_\e^1, &\mbox{in }\Om, 
		\end{cases}
	\end{equation*}
	then from \eqref{12.11.1} in Theorem \ref{12.11.T1} we obtain 
	\begin{equation*}
		\left\|\f{\pt (\vp_\e-y)}{\pt \nu}\right\|_{L^2(0,T; L^2(\pt\Om-B(0,R_0)))}\leq C\left(\|y^0-\vp_\e^0\|_{H_0^1(\Om;w)}+\|y^1-\vp_\e^1\|_{L^2(\Om)}\right), 
	\end{equation*}
	i.e., $\f{\pt\vp_\e}{\pt \nu}\ra \f{\pt y}{\pt \nu}$ strongly in $L^2(\pt Q)$ as $\vp_\e^0\ra y^0$ in $H_0^1(\Om;w)$ and $\vp_\e^1\ra y^1$ in $L^2(\Om)$, 
	where the constant $C>0$  depends only on $\al, R_0, T$ and $M$ and $\Om-B(0,\f{1}{2}R_0)$. Note that from $\Ga_0\s \pt\Om-B(0,R_0)$ (see Assumption \ref{Assumption (H)}) we have 
	\begin{equation*}
		\int_0^T\int_{\Ga_0}\left(\f{\pt \vp_\e}{\pt\nu}\right)^2\df S\df t\geq C_1\int_\Om \left((\vp_\e^1)^2+w|\nabla \vp_\e^0|^2\right)\df x
	\end{equation*}
	by Lemma \ref{12.04.L2}, 
	where the constant $C_1>0$ depends only on $\al, R_0, T$ and $M$. Hence
	\begin{equation*}
		\int_0^T\int_{\Ga_0}\left(\f{\pt y}{\pt\nu}\right)^2\df S\df t\geq C_1\int_\Om \left((y^1)^2+w|\nabla y^0|^2\right)\df x=C_1E(0)
	\end{equation*}
	by letting $\e\ra 0$. 
	This completes the proof of this theorem.
\end{proof}

\subsection{Observability}

We say that equation \eqref{03.06.m} is observable at time $T>0$ if the corresponding solution with initial data $(y^0,y^1)$ and $f=0$ satisfies the observability inequality
\begin{equation}\label{12.16.1}
	\begin{split}
		E(0)\leq C\int_0^T\int_{\Ga_0}\left(\f{\pt y}{\pt \nu}\right)^2\df S\df t, 
	\end{split}
\end{equation}
where $C>0$ is a constant, and 
\begin{equation*}
	E(0)=\f{1}{2}\iint_Q \left(w\nabla y^0\cdot \nabla y^0+|y^1|^2\right)\df x\df t.
\end{equation*}

From \eqref{12.16.1}, we can get Theorem \ref{12.04.T4} in the following. 

\begin{proof}[Proof of Theorem \ref{12.04.T4}]
	The observability inequality \eqref{12.16.1} is \eqref{12.04.3}. We complete the proof of this theorem.
\end{proof}

\end{document}